\begin{document}
\frontmatter
\title{Wild twistor $\mathcal D$-modules}

\author[C.~Sabbah]{Claude Sabbah}
\address{UMR 7640 du CNRS\\
Centre of Math\'ematiques Laurent Schwartz\\
\'Ecole polytechnique\\
F--91128 Palaiseau cedex\\
France}
\email{sabbah@math.polytechnique.fr}
\urladdr{http://www.math.polytechnique.fr/~sabbah}

\begin{abstract}
We propose a definition of (polarized) wild twistor $\mathcal D$-modules, generalizing to objects with irregular singularities that of (polarized) regular twistor $\mathcal D$-modules. We give a precise analysis in dimension one.
\end{abstract}

\subjclass[2000]{Primary 32S40; Secondary 14C30, 34Mxx}

\keywords{D-module, twistor structure, polarization, flat connection, harmonic metric}
\maketitle
\tableofcontents
\mainmatter
\section*{Introduction}
In \cite{Bibi01c}, we have introduced the notion of \emph{polarized regular twistor $\cD$-module} on a complex manifold, and one of the main results was a \emph{decomposition theorem} for the direct image of such objects by a projective morphism between complex manifolds. A consequence of this theorem was the proof of a particular case of a conjecture of M\ptbl Kashiwara, saying that the direct image by a projective morphism of an irreducible holonomic $\cD$-module on a projective manifold should decompose into direct sums of irreducible holonomic $\cD$-module on the target manifold. The particular case treated was that of a \emph{smooth} twistor $\cD$-module.

T\ptbl Mochizuki, through a very precise analysis of harmonic metrics on the complement of a normal crossing divisor in a projective complex manifold \cite{Mochizuki07}, succeeded in proving the equivalence
\begin{gather*}
\textit{polarized regular twistor $\cD$-module}\\ \Big\updownarrow\\ \textit{semisimple regular holonomic $\cD$-modules}
\end{gather*}
on a projective manifold, giving therefore a proof of the conjecture of M\ptbl Kashiwara for semisimple regular holonomic $\cD$-modules.

On the other hand, using the Riemann-Hilbert correspondence, one can state the conjecture for perverse sheaves, and it is equivalent to the conjecture in the regular case. From this point of view, the conjecture has been proved by V\ptbl Drinfeld \cite{Drinfeld01}, modulo a conjecture of de~Jong, partly proved later by G\ptbl B\"ockle and C\ptbl Khare \cite{B-K03} on the one hand and proved by D\ptbl Gaitsgory \cite{Gaitsgory04} modulo results not yet written up on the other hand.

The goal of this article is to introduce a category of (polarized) wild twistor $\cD$-modules. Conjecturally, on any projective manifold, this category (in the polarized case) would be equivalent to the category of semisimple holonomic $\cD$-modules, and this would provide us with a tool for an analytic proof of the conjecture of M\ptbl Kashiwara for (possibly non regular) semisimple holonomic modules. We develop, in this context, an idea of P\ptbl Deligne \cite{Deligne83} for defining nearby cycles for irregular $\cD$-modules.

However, we do not give here any result in the direction of the previous conjecture. One would need to develop an analysis of harmonic metrics analogous to that developed by T\ptbl Mochizuki for tame harmonic metrics. Nevertheless, in dimension one, such kind of results have been obtained by O\ptbl Biquard and Ph\ptbl Boalch \cite{B-B03}.

The main result of this article (Theorem \ref{th:orbnilp}) is an analysis of the behaviour of wild twistor $\cD$-modules on a Riemann surface in the neighbourhood of the singularities.

\begin{remarque*}
In the recent preprint \cite{Mochizuki08}, T\ptbl Mochizuki enlarges the framework developed here and gives complete results on the theory.
\end{remarque*}

\subsubsection*{Acknowledgements}
I~thank the referee for his useful comments.

\section{Preliminaries}
In \oldS\S\ref{subsec:notadef}, \ref{subsec:lochyp} and \ref{subsec:strictspe}, we quickly review definitions and results from \cite{Bibi01c} concerning $\cR_\cX$-modules, which we refer to for more details. In \S\ref{subsec:Sdec}, we give complements on the notion of strict S-decomposability which was used in \cite{Bibi01c}.

\subsection{Notation and basic definitions}\label{subsec:notadef}
We fix a coordinate $\hb$ on the complex line~$\CC$. We denote by $\Delta_0$ the closed disc $\mhb\leq1$ and by $\bS$ its boundary $\mhb=1$. We will usually denote by $\Omega_0$ some open disc of radius $r>1$, and $r-1$ can be chosen arbitrarily small.

Let $X$ be a complex manifold. We denote by a curly $\cX$ the product $X\times\Delta_0$ and by $\cO_\cX$ the sheaf on~$\cX$ of germs on~$\cX$ of holomorphic functions on $X\times\Omega_0$. The sheaf $\cR_\cX$ of holomorphic differential operators is locally defined in coordinates as $\cO_\cX\langle\partiall_{x_1},\dots,\partiall_{x_n}\rangle$ with $\partiall_{x_j}\defin\hb\partial_{x_j}$ and, for any $f\in\cO_\cX$, $[\partiall_{x_j},f]=\hb\partial f/\partial x_j$. The category of left $\cR_\cX$-modules is equivalent to the category of $\cO_\cX$-modules equipped with a $\hb$-connection (\ie a $\CC$-linear endomorphism satisfying Leibniz rule with $\hb d$ instead of the usual differential $d$).

A $\cO_\cX$-module is said to be \emph{strict} if it has no $\cO_{\Omega_0|\Delta_0}$-torsion. The word ``strict'' always refer to such a property.

We often use the notion of coherent, good, and holonomic (left or right) $\cR_\cX$-module (\cf \cite[Def\ptbl1.2.4]{Bibi01c}).

Regarding the projective line $\PP^1$ as the union of two charts $\Omega_0$ and $\Omega_\infty$, we denote by $\sigma$ the anti-linear involution $\hb\mto-1/\ov\hb$, where $\ov\hb$ denotes the usual conjugate of~$\hb$. We use the notation $\ovv{\phantom{X}}$ for the ``twistor conjugation'': if $f\in\cO(\Omega)$, we define $\ovv f\in\cO(\sigma(\Omega))$ by the formula $\ovv f(\hb)=\ov{f(-1/\ov\hb)}$ (on the right-hand side, the conjugation is the usual one on complex numbers). If $\cH$ is a holomorphic bundle on~$\Omega_0$, then its ``conjugate'' $\ovv\cH\defin\sigma^*\ov\cH$ is a holomorphic bundle on~$\Omega_\infty$.

Twistor conjugation on~$\cX$ is meant as the usual conjugation on functions on~$X$ and twistor conjugation with respect to $\hb$. We denote by $\ovv\cX$ the product $\ov X\times\Delta_\infty$ and by $\cO_{\ovvv\cX}$ the sheaf of holomorphic functions on $\ov X\times\Delta_\infty$ (\ie anti-holomorphic with respect to $X$). The conjugate $\ovv\cM$ of a left $\cR_\cX$-module is a left $\cR_{\ovvv\cX}$-module.

We denote by $\cCh{\cX}$ the sheaf of $C^\infty$ functions on~$\cX$ which are holomorphic with respect to $\hb$. Similarly, $\cCc{\cX}$ denotes the sheaf of continuous functions on~$\cX$ which are $C^\infty$ with respect to $X$. We denote by $\Dbh{X}$ the sheaf on $X\times\bS$ of distributions which are continuous with respect to $\bS$.

An object $\cT$ of the category $\RTriples(X)$ consists of a triple $\cT=(\cM',\cM'',C)$, where $\cM',\cM''$ are left $\cR_\cX$-modules and $C$ is a sesquilinear pairing $\cMS'\otimes_{\cOS}\cMS''\to\Dbh{X}$, with $\cOS\defin\cO_{\Omega_0|\bS}$. We say that $\cT$ is smooth if $\cM',\cM''$ are $\cO_\cX$-locally free. In such a case, $C$ takes values in $\cCc{\cX|\bS}$ (\cf \cite[Lemma 1.5.3]{Bibi01c} where $\cCh{\cX|\bS}$ should read $\cCc{\cX|\bS}$).

The Tate twist by $k\in\hZZ$ of $\cT$ is defined by
\[
\cT(k)=(\cM',\cM'',(i\hb)^{-2k}C).
\]

\subsection{Localization away from a hypersurface}\label{subsec:lochyp}
Let $X_0$ be a complex manifold, let $D$ be an open disc in $\CC$ centered at the origin with coordinate $t$, and let $X$ be an open set in $X_0\times\nobreak D$. We also regard $t$ as a function on~$X$, and we denote by $\partiall_t$ the corresponding vector field. For simplicity, we may also denote by $X_0$ the divisor $t^{-1}(0)\subset X$ (which is open in the original~$X_0$).

We can extend the previous definitions to $\cR_\cX[\tm]$-modules:

\begin{definition}\label{def:wtRtriples}
The category $\wtRTriples(X)$ consists of objects $\wt\cT=(\wt\cM',\wt\cM'',\wt C)$, where $\wt\cM',\wt\cM''$ are $\cR_\cX[\tm]$-coherent and~$\wt C$ is a sesquilinear pairing between them taking values in the sheaf $\Dbh{X}[\tm]$ of distributions on $\{t\neq0\}$ depending continuously on $\hb\in\bS$ and having moderate growth along $\{t=0\}$.
\end{definition}

There is a natural functor (localization away from $\{t=0\}$) from $\RTriples(X)$ to $\wtRTriples(X)$.

\subsection{Strict specializability}\label{subsec:strictspe}
We keep notation of \S\ref{subsec:lochyp}. Let $\cM$ be a coherent $\cR_\cX$-module. We say that $\cM$ is \emph{strictly specializable} along $\{t=0\}$ (\cf \cite[Def\ptbl3.3.8]{Bibi01c}) if it has locally a decreasing Kashiwara-Malgrange filtration $V^\cbbullet_\phbo\cM$ indexed by $\RR$ with Bernstein polynomial having the special form\footnote{One can introduce more general kinds of Bernstein relations, in order to take into account various parabolic filtrations, as in \cite{Mochizuki07}. We will not do this here.} of a product of terms $t\partiall_t-\beta\star\hb$, where $\beta\star\hb\defin\reel\beta+i(\hb^2+1)\im\beta/2$ (\cf \cite[(3.3.3)]{Bibi01c} where we replace $\partiall_tt+\alpha\star\hb$ with $t\partiall_t-\beta\star\hb$ with $\beta=-\alpha-1$) for which the graded pieces have no $\hb$-torsion, and are generated through the action of $t$ or $\partiall_t$ by those for which the real part of the index belongs to $[-1,0]$. We will also use the notation $\beta=\beta'+i\beta''$ with $\beta',\beta''\in\RR$ and $\ell_\hb(\beta)\defin\reel(\beta'+i\hb\beta'')=\beta'-\beta''\im\hb$ (\cf \cite[\S0.9]{Bibi01c}, or \cite{Mochizuki07} for a different notation $\mathfrak{p},\mathfrak{e}$).

\begin{remarque}[Decreasing $V$-filtration]\label{rem:decVfil}
In this article, we use the decreasing convention for the $V$-filtrations. We indicate increasing filtrations with lower indices, and decreasing ones with upper indices. The correspondence with \cite{Bibi01c} is as follows, setting $\beta=-\alpha-1$,
\begin{align*}
V_\alpha^\phbo&=V^\beta_\phbo,\quad\psi_{f,\alpha}=\psi_f^\beta,\\
\tag*{and} \Psi_{f,\alpha}&=\Psi_f^\beta\quad\text{for $\reel\alpha\in[-1,0)$, so $\reel\beta\in(-1,0]$.}
\end{align*}
\end{remarque}

If $\cM$ is strictly specializable along $\{t=0\}$ then, for any $\beta\in\CC$, one defines the $\cR_{\cX_0}$-modules $\psi_t^\beta\cM$, equipped with a nilpotent endomorphism $\rN$ induced by $-(t\partiall_t-\beta\star\hb)$. The monodromy filtration $\rM_\bbullet$ attached to $\rN$ on $\psi_t^\beta\cM$, indexed by $\ZZ$, is defined by the properties that $\rN$ sends $\rM_k$ in $\rM_{k-2}$ and that, for each $\ell\geq0$, $\rN^\ell$ induces an isomorphism $\gr_\ell^{\rM}\isom\gr_{-\ell}^{\rM}$. (In general, $\gr_\ell^{\rM}\psi_t^\beta\cM$ is possibly not strict, but it is so when $\cM$ underlies a twistor $\cD$-module.) The primitive submodule $P\gr_\ell^{\rM}\psi_t^\beta\cM$ is defined, as usual, as the kernel of $\rN^{\ell+1}:\gr_\ell^{\rM}\psi_t^\beta\cM\to\gr_{\ell-2}^{\rM}\psi_t^\beta\cM$.

These notions can be extended (with a similar notation) to $\cR_\cX[\tm]$-modules $\wt\cM$, \cf \cite[\S3.4]{Bibi01c}.

We have functors between the categories of $\cR_\cX$- and $\cR_\cX[\tm]$-modules which are strictly specializable along $\{t=0\}$: the first one is the \emph{localization away from $\{t=0\}$}, and the other one is the \emph{minimal extension across $\{t=0\}$}.

If $\cM$ is strictly specializable along $\{t=0\}$, then the localized module $\cM[\tm]$ is $\cR_\cX[\tm]$-coherent and strictly specializable along $\{t=0\}$, and we define $\Psi_t^\beta\cM=\psi_t^\beta(\cM[\tm])$. The properties of $\Psi$ are given in \cite[\S3.4]{Bibi01c}.

Conversely, given any strictly specializable $\wt\cM$, we can define its minimal extension $\wt\cM_{\min_t}$ across $\{t=0\}$ as the $\cR_\cX$-submodule of $\wt\cM$ locally generated by $V_\phbo^{>-1}\wt\cM$ (\cf \cite[\S3.4.b]{Bibi01c}).

Starting from a strictly specializable $\cR_\cX$-module $\cM$, we then denote by $\cM_{\min_t}$ the $\cR_\cX$-module obtained from $\cM$ by composing both functors, namely $\cM_{\min_t}\defin(\wt\cM)_{\min_t}$. We say that $\cM$ is a \emph{minimal extension across $\{t=0\}$} if $\cM=\cM_{\min_t}$.

If $\cT=(\cM',\cM'',C)$ is an object of $\RTriples(X)$ and if $\cM',\cM''$ are strictly specializable along $\{t=0\}$, then the pairing $C$ can be specialized to each $\psi_t^\beta\cM',\psi_t^\beta\cM''$, defining thus $\psi_t^\beta\cT$ (\cf \cite[(3.6.10)]{Bibi01c}). For any $\ell\geq0$, the pairing $\psi_t^\beta C$ induces a pairing $\psi_t^{\beta,\ell}C$ between $\gr_{-\ell}^{\rM}\psi_t^\beta\cM'$ and the conjugate of $\gr_\ell^{\rM}\psi_t^\beta\cM''$. We denote by $P\psi_t^{\beta,\ell}C$ the pairing $\psi_t^{\beta,\ell}C((i\rN)^\ell\cbbullet,\ovv{\cbbullet})$ induced between $P\gr_\ell^{\rM}\psi_t^\beta\cM'$ and $P\gr_\ell^{\rM}\psi_t^\beta\cM''$. This defines an object $P\gr_\ell^{\rM}\psi_t^\beta\cT$. We will also have to consider the twisted object $P\gr_\ell^{\rM}\psi_t^\beta\cT(\ell/2)$, with $P\psi_t^{\beta,\ell}C(\ell/2)=(i\hb)^{-\ell}P\psi_t^{\beta,\ell}C$.

Similarly, if $\wt\cT=(\wt\cM',\wt\cM'',\wt C)$ is an object of $\wtRTriples(X)$ and if $\wt\cM',\wt\cM''$ are strictly specializable along $\{t=0\}$, then the pairing $\wt C$ can be specialized to each $\psi_t^\beta\wt\cM',\psi_t^\beta\wt\cM''$, with the same procedure as above. This defines $\psi_t^\beta\wt\cT$.

If $\wt\cT$ is obtained by localization of $\cT$ then, for $\reel\beta\in(-1,0]$, the specialized pairing $\psi_t^\beta\wt C$ is equal to $\psi_t^\beta C$ (this follows from \cite[(3.6.10) and Rem\ptbl3.4.4]{Bibi01c}). In other words, we can define $\Psi_t^\beta\cT$ as $\psi_t^\beta\wt\cT$, consistently with \cite[Def\ptbl3.6.11]{Bibi01c}.

Let now $f:X\to D$ be a holomorphic function. We denote by $i_f:X\hto X'\defin X\times D$ the graph inclusion $x\mto(x,f(x))$.

\begin{definition}\label{def:strictspef}
If $\cM$ is a coherent $\cR_\cX$-module (\resp if $\cT$ is an object of $\RTriples(X)$), we say that~$\cM$ (\resp $\cT$) is \emph{strictly specializable along $\{f=0\}$} if $i_{f,+}\cM$ (\resp $i_{f,+}\cT$) is so along $\{t=0\}$.
\end{definition}

A similar definition can be made for a $\cR_\cX[\tm]$-module $\wt\cM$ or an object $\wt\cT$ of $\wtRTriples(X)$. The notion of direct image $i_{f,+}$ is well-defined from the category of $\cR_\cX[1/f]$-modules to that of $\cR_{\cX'}[\tm]$-modules in a way compatible with the direct image of $\cR_\cX$-modules, that is, if $\wt\cM=\cM[1/f]$, then $i_{f,+}\wt\cM=(i_{f,+}\cM)[\tm]$. A similar notion applies to the corresponding categories $\wtRTriples(X)$ and $\wtRTriples(X')$.

\subsection{Strictly S-decomposable objects}\label{subsec:Sdec}
Let $\cM$ be a strictly specializable $\cR_\cX$-module along $\{t=0\}$. We say that it is \emph{strictly S-decomposable along $\{t=0\}$} if $\cM=\cM_{\min_t}\oplus\cM''$, with $\cM''$ having support in $\{t=0\}$. We notice that $\cM''$, being a direct summand of a strictly specializable object, is
also strictly specializable along $\{t=0\}$, hence take the form $i_+\cN''$ for some coherent $\cR_{\cX_0}$-module (Kashiwara's equivalence \cite[Cor\ptbl3.3.12]{Bibi01c}), where $i:X_0\hto X$ denotes the inclusion, and in fact $\cN''=\Ker[t:\cM''\to\cM'']$ (given any $\cM''$ supported in $\{t=0\}$, we always can define $\cN''$ by the previous formula, and the strict specializability of $\cM''$ insures that $\cM''=i_+\cN''$). In \cite[Prop\ptbl3.3.11]{Bibi01c}, we gave a characterization of such strictly specializable modules in terms of the morphisms $\can$ and $\var$, in a way analogous to \cite[Lemme 5.1.4]{MSaito86}.

\begin{remarque}
Let $f:X\to D$ be a holomorphic function and let $\wt\cM$ be a coherent $\cR_\cX[1/f]$-module. With the only assumption of strict specializability along $\{f=0\}$, it is not clear whether $(i_{f,+}\wt\cM)_{\min_t}$ is $i_{f,+}$ of some $\cR_\cX$-module. We will show below that, with the stronger assumption of strict S-decomposability, this property holds, and it enables us to define the minimal extension of $\cM$ across $\{f=0\}$.
\end{remarque}

We say that $\cM$ (\resp $\cT$) is \emph{strictly S-decomposable along $\{f=0\}$} if $i_{f,+}\cM$ (\resp $i_{f,+}\cT$) is strictly S-decomposable along $\{t=0\}$.

\begin{lemme}\label{lem:Sdecompf}
If $\cM$ is strictly S-decomposable along $\{f=0\}$, then the decomposition $i_{f,+}\cM=(i_{f,+}\cM)_{\min_t}\oplus(i_{f,+}\cM)''$ is the direct image by $i_f$ of a decomposition $\cM=\cM'\oplus\cM''$.
\end{lemme}

We will then denote $\cM'$ by $\cM_{\min_f}$ and call it the \emph{minimal extension of $\cM$ across $\{f=0\}$}. (This lemma is implicitly used in \cite[Prop\ptbl3.5.4]{Bibi01c}, when proving the existence of the decomposition of a strictly S-decomposable holonomic $\cR_\cX$-module with respect to the strict support.)

\begin{proof}[\proofname\ of Lemma \ref{lem:Sdecompf}]
We have $\cM=\Ker[t-f:i_{f,+}\cM\to i_{f,+}\cM]$. We set $\cM'=\Ker[t-f:(i_{f,+}\cM)_{\min_t}\to (i_{f,+}\cM)_{\min_t}]$ and $\cM''=\Ker[t-f:(i_{f,+}\cM)''\to (i_{f,+}\cM)'']$. Then we clearly have $\cM=\cM'\oplus\cM''$ and $i_{f,+}\cM'\subset (i_{f,+}\cM)_{\min_t}$, $i_{f,+}\cM''\subset (i_{f,+}\cM)''$. Both inclusions are equalities, as their direct sum is an equality by assumption.
\end{proof}

\begin{remarque}
If $\cM$ is strictly specializable along $\{f=0\}$, it is also strictly specializable along $\{f^r=0\}$ for any $r\geq1$ (\cf \cite[Prop\ptbl3.3.13]{Bibi01c}). If $\cM$ is strictly S-decomposable along $\{f=0\}$, then the decomposition $\cM=\cM_{\min_f}\oplus\cM''$ is also a decomposition relative to $f^r$, so in particular $\cM_{\min_f}$ is also the minimal extension relative to $f^r$.
\end{remarque}

We say that $\cM$ is strictly S-decomposable at $x_o\in X$ if it is strictly S-decomposable with respect to any germ at $x_o$ of holomorphic function on~$X$, and that $\cM$ is strictly S-decomposable if it is so at any point $x_o$ of $X$.

The following lemma is implicitly used in \loccit:

\begin{lemme}[Kashiwara's equivalence for strictly S-decomposable $\cR_\cX$-modules]\label{lem:kashequiv}
Let $i:Z\hto X$ be the inclusion of a submanifold. A coherent $\cR_\cX$-module $\cN$ is strictly S-decomposable and is supported in~$Z$ if and only if there exists a coherent strictly S-decomposable $\cR_\cZ$-module~$\cM$ such that $i_+\cM=\cN$. We then have $\cM=\cO_\cZ\otimes_{\cO_\cX}\cN$.
\end{lemme}

\begin{proof}
According to the second part, the problem is local, so we can reduce to the case where $Z$ is defined by an equation $t=0$, where $t$ is part of a coordinate system on~$X$.

Let $\cM$ be a coherent strictly S-decomposable $\cR_\cZ$-module. We will first prove that $i_+\cM$ is a strictly S-decomposable $\cR_\cX$-module.

On the one hand, let $f$ be a holomorphic function which vanishes identically on $Z$. Denote by $i_f:X\hto X\times\CC$ the natural inclusion, and by $u$ the coordinate on~$\CC$ (corresponding to $f$). Then $i(z)=(z,0)$ and $i_f\circ i(z)=(z,0,0)$, and the strict decomposition of $(i_f\circ i)_+\cM$ along $u=0$ is clear from that of $i_+\cM$ along $t=0$.

On the other hand, if $f$ does not vanish identically on~$Z$. Denote by $i_f:X\hto X\times\CC$ and $i_{f|Z}:Z\hto Z\times\CC$ the natural inclusions, and by $u$ the coordinate on~$\CC$ (corresponding to $f$ or $f|Z$). Denote by $i':Z\times\CC\hto X\times\CC$ the inclusion $i\times\Id_\CC$. Then $i_f\circ i=i'\circ i_{f|Z}$. By assumption, $i_{f|Z,+}\cM$ is strictly S-decomposable along $\{u=0\}$. Then one easily checks that $i'_+(i_{f|Z,+}\cM)=i_{f,+}(i_+\cM)$ is strictly S-decomposable along $\{u=0\}$, as the $V$-filtration relative to~$u$ is obtained from that of $i_{f|Z,+}\cM$ by applying $i'_+$.

Conversely, assume that $\cN$ is strictly S-decomposable on~$X$ and supported on~$Z$. It then strictly specializable along $\{t=0\}$ and, by \cite[Prop\ptbl 3.3.11(b)]{Bibi01c}, it takes the form $i_+\cM$ with $\cM=V^{-1}\cN$. Any holomorphic function~$f$ on~$Z$ locally extends as a holomorphic function on~$X$, and the $V$-filtrations of $i_{f,+}\cM$ and $i_{f,+}\cN$ along $\{t=0\}$ are easily related, showing that $\cM$ is strictly specializable along $\{f=0\}$ (\resp strictly S-decomposable) if~$\cN$ is~so.
\end{proof}

Let now $\cT=(\cM',\cM'',C)$ be an object of $\RTriples(X)$. We say that it is strictly S-decomposable (\resp holonomic) if $\cM',\cM''$ are so. If $\cT$ is holonomic and strictly S-decomposable, then $\cM',\cM''$ have a decomposition with respect to the strict support, and $C$ also decomposes, according to \cite[Prop\ptbl3.5.8]{Bibi01c}, hence $\cT$ also admits a decomposition with respect to the strict support.

\begin{lemme}\label{lem:kashequivT}
Kashiwara's equivalence \ref{lem:kashequiv} applies to strictly S-decomposable objects of $\RTriples(X)$.
\end{lemme}

\begin{proof}
This is Lemma 3.6.32 in \cite{Bibi01c}.
\end{proof}

\subsection[Minimal extension of strictly specializable objects]{Minimal extension of strictly specializable objects of~$\RTriples$}
\label{subsec:minext}
Let us take the setting of \S\ref{subsec:lochyp}. We have considered the two functors called ``localization along $\{t=0\}$'' and ``minimal extension across $\{t=\nobreak0\}$'' between strictly specializable\footnote{along $\{t=0\}$, if no other indication.} $\cR_\cX[\tm]$ and $\cR_\cX$-modules.

At many places it is simpler to work with $\cR_\cX[\tm]$-modules, for instance when considering ramification along $\{t=0\}$. However, when taking de~Rham complexes (or direct images), finiteness (or coherence) is obtained for coherent $\cR_\cX$-modules only. When trying to extend similar properties to objects of $\RTriples(X)$, we are led to define these functors at the level of the categories $\wtRTriples(X)$ and $\RTriples(X)$.

Let $\cT=(\cM',\cM'',C)$ be an object of $\RTriples(X)$ which is strictly specializable along $\{t=0\}$. Then we say that $\cT$ is a \emph{minimal extension across $\{t=0\}$} if $\cM',\cM''$ are so.

Given any strictly specializable object $\cT=(\cM',\cM'',C)$, the localization along $\{t=0\}$ is the object $\wt\cT\defin(\wt\cM',\wt\cM'',\wt C)$, where $\wt\cM',\wt\cM''$ are the localization of $\cM',\cM''$, and $\wt C$ is the natural extension of $C$ taking values in $\Dbh{X}[\tm]$.

On the other hand, we did not define in \cite{Bibi01c} the minimal extension $\cT_{\min_t}$, that is, we did not define a sesquilinear pairing $C_{\min_t}$ between the minimal extensions $\cM'_{\min_t}$ and $\cM''_{\min_t}$. The sesquilinear pairing $\wt C$ between $\wt\cM'$ and $\wt\cM''$ takes values in distributions having moderate growth along $\{t=0\}$, and one searches for a natural ``principal value'' $C_{\min_t}$ of $\wt C$ as being a sesquilinear pairing between $\cM'_{\min_t}$ and $\cM''_{\min_t}$ taking values in ordinary distributions (being understood that the distributions depend on $\hb\in\bS$ in a continuous way; in other words, we work with $\Dbh{X}$ and $\Dbh{X}[\tm]$). Such a result was not needed in \cite{Bibi01c}, as we mainly worked with strictly S-decomposable objects.

We will use the results of \cite[Appendix]{Bibi01c}, as detailed in \cite{Bibi05b}, to construct $C_{\min_t}$, hence $\cT_{\min_t}$. We introduce a new variable $\tau$ and denote by $\Afuh$ the corresponding complex line. We denote by $p$ the projection $Z=X\times\Afuh\to X$. We denote by $\cE^{-\tau/\hb t}$ the free $\cO_\cZ[\tm]$-module of rank one, with generator denoted by $\text{``}e^{-\tau/\hb t}\text{''}$, equipped with the action of $\cR_\cZ[\tm]$ defined by
\begin{align*}
t\partiall_\tau\,\text{``}e^{-\tau/\hb t}\text{''}&=-\text{``}e^{-\tau/\hb t}\text{''},\\
t^2\partiall_t\,\text{``}e^{-\tau/\hb t}\text{''}&=\tau\cdot\text{``}e^{-\tau/\hb t}\text{''},
\end{align*}
and we set $\cFcM\defin \cE^{-\tau/\hb t}\otimes p^+\wt\cM$, where the tensor product is taken over $\cO_\cZ[\tm]$ and is equipped with its natural structure of left $\cR_\cZ[\tm]$-module (see \loccit\ but be careful that the variable called $t$ here corresponds to the variable called $t'$ in \loccit, and we do not consider here the chart with variable $1/t$, called $t$ in \loccit).

\begin{proposition}\label{prop:minM}
Let $\wt\cM$ be $\cR_\cX[\tm]$-coherent and strictly specializable along $\{t=\nobreak0\}$. Then $\cFcM$ is $\cR_\cZ$-coherent and strictly specializable along $\{\tau=0\}$ and is a minimal extension across $\{\tau=0\}$. Moreover, we have a natural isomorphism
\[
\wt\cM_{\min_t}\isom P\gr_0^{\rM}\psi_\tau^0\cFcM.
\]
\end{proposition}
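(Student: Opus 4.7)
For the proof I would proceed in three stages: first, $\cR_\cZ$-coherence of $\cFcM$; second, the construction of the $V$-filtration along $\{\tau = 0\}$ together with strict specializability and the minimal extension property; third, the natural isomorphism with $\wt\cM_{\min_t}$. The starting observation for coherence is that the defining relations of $\cE^{-\tau/\hb t}$ force the operator identity $(t\partiall_\tau + 1)\zeta = 0$ for every local section $\zeta$ of $\cFcM$: on the generator $e$ this is the relation $t\partiall_\tau\cdot e = -e$, and on $e \otimes m$ with $m$ a local section of $p^+\wt\cM$ one uses that $m$ is independent of $\tau$, so $\partiall_\tau$ acts only through $e$. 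Equivalently, $t^{-1} = -\partiall_\tau$ as operators on $\cFcM$, so the localization at $t$ is already available inside the $\cR_\cZ$-structure. Hence any $\cR_\cX$-coherent subsheaf $\wt\cM_0 \subset \wt\cM$ generating $\wt\cM$ over $\cR_\cX[\tm]$---for instance some $V^\alpha_\phbo\wt\cM$, which is $V^0\cR_\cX$-coherent by strict specializability along $\{t=0\}$---yields, via $m \mapsto e \otimes m$, an $\cR_\cZ$-coherent generating subsheaf of $\cFcM$.

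Next I would construct the Kashiwara--Malgrange $V$-filtration along $\{\tau = 0\}$ by transporting that of $\wt\cM$ along $\{t = 0\}$, using the substitution $-\tau\partiall_\tau \leftrightarrow \tau/t$ (which holds on $\cFcM$ because $t\partiall_\tau = -1$). A natural ansatz is
\[
V^\beta_\phbo\cFcM := \sum_{k \geq 0}\tau^k\,\bigl(e\otimes V^{\beta - k}_\phbo\wt\cM\bigr),
\]
and I would check the axioms: $V^0\cR_\cZ$-coherence of each $V^\beta$ (from $V^0\cR_\cX$-coherence of the $V^\alpha\wt\cM$); the compatibilities $\tau V^\beta \subset V^{\beta+1}$ (equality for $\beta \geq 0$) and $\partiall_\tau V^\beta \subset V^{\beta-1}$; the Bernstein polynomial relation on each $\gr_V^\beta\cFcM$, deduced from the corresponding relation on the $\gr_V^\alpha\wt\cM$ via the above substitution; and strictness of the graded pieces, inherited from $\wt\cM$. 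The minimal extension property across $\{\tau = 0\}$ then reduces to the identity $\cFcM = \cR_\cZ\cdot V^{>-1}\cFcM$, which follows from $\partiall_\tau = -t^{-1}$ together with the isomorphism $t\colon\gr_V^\alpha\wt\cM \isom \gr_V^{\alpha+1}\wt\cM$ valid for $\alpha \geq 0$ by strict specializability.

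Finally, for the natural isomorphism, I would compute $\psi_\tau^0\cFcM = \gr_V^0\cFcM$ from the ansatz: it decomposes into contributions indexed by the $\gr_V^\alpha\wt\cM$ with $\reel\alpha$ in a fundamental strip, and the nilpotent $\rN = -\tau\partiall_\tau$ acts through $\tau/t$, so that its monodromy filtration matches, via the substitution, the filtration induced on the $\gr_V^\alpha\wt\cM$ by $t\partiall_t - \alpha\star\hb$. The primitive weight-zero part $P\gr_0^{\rM}$ then singles out precisely the piece corresponding to $V^{>-1}\wt\cM \bmod V^{\geq 0}\wt\cM$, from which the full $\wt\cM_{\min_t}$ is recovered by $\cR_\cX$-action. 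The main obstacle I anticipate is the verification of the Bernstein polynomial relation on each $\gr_V^\beta\cFcM$ in the second step, which couples the Bernstein relations for $\wt\cM$ along $\{t=0\}$ with the $\tau/t$-shift coming from the exponential twist; the index bookkeeping is delicate, and these computations are essentially those performed in \cite{Bibi05b} for the closely related Fourier-type construction that the present paper invokes.
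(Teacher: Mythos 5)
Your observation that $t\partiall_\tau$ acts as $-\id$ on $\cFcM$, so that $t^{-1}=-\partiall_\tau$ is already available inside the $\cR_\cZ$-structure, is correct and is exactly the right starting point; your treatment of $\cR_\cZ$-coherence is essentially sound once one replaces $V^\alpha\wt\cM$, which is only $V^0\cR_\cX$-coherent, by an $\cO_\cX$-coherent subsheaf generating it. Note that the paper's own proof is a citation of \cite[Prop.\,4.1]{Bibi05b} plus the remark that the regularity hypothesis there is not used for this statement; a self-contained argument therefore has to reproduce the computations of that reference, and that is where your sketch breaks down.

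The ansatz
\[
V^\beta\cFcM := \sum_{k\geq 0}\tau^k\bigl(e\otimes V^{\beta-k}\wt\cM\bigr)
\]
is not a Kashiwara--Malgrange $V$-filtration along $\{\tau=0\}$, and the failure is precisely in the Bernstein relation you flag as the delicate point. Since $\tau\partiall_\tau(e\otimes m)=-(\tau/t)(e\otimes m)$ and $[\tau\partiall_\tau,\tau^k]=k\hb\,\tau^k$, the operator $\tau\partiall_\tau$ acts on the $k$-th summand $\tau^k\bigl(e\otimes\gr_V^{\beta-k}\wt\cM\bigr)$ of your $\gr_V^\beta\cFcM$ with diagonal part $k\hb$, the remainder landing in the $(k+1)$-st summand. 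Hence the generalized spectrum of $\tau\partiall_\tau$ on your $\gr_V^\beta$ is the entire arithmetic progression $\{k\hb:k\in\NN\}$, independently of $\beta$; in particular $\tau\partiall_\tau-\beta\star\hb$ is nowhere near nilpotent. Moreover $\tau\colon\gr_V^\beta\to\gr_V^{\beta+1}$ misses the $k=0$ summand $e\otimes\gr_V^{\beta+1}\wt\cM$, so $\tau V^\beta=V^{\beta+1}$ fails for $\beta\geq 0$ and the filtration is not good. Already for a rank-one regular $\wt\cM$ one gets an infinite-dimensional $\gr_V^0$ on which $\tau\partiall_\tau$ has unbounded spectrum.

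The conceptual point you are missing is that $-\tau\partiall_\tau=\tau/t$ is a \emph{multiplication} operator on $\cFcM$, not a logarithmic derivative along $t$, so it does not inherit a Bernstein polynomial from $t\partiall_t$ by substitution. The identity that actually controls the interaction is $(t\partiall_t+\tau\partiall_\tau)(e\otimes m)=e\otimes(t\partiall_t m)$, and the content of \cite[Prop.\,4.1]{Bibi05b} is to extract from it, together with $\partiall_\tau=-t^{-1}$, an honest Bernstein functional equation in $\tau\partiall_\tau$ alone. The resulting $V$-filtration along $\{\tau=0\}$ is not a re-indexing of the one along $\{t=0\}$: one finds $\psi_\tau^\beta\cFcM\simeq\psi_t^\beta\wt\cM$ for $\reel\beta\in(-1,0)$, but $\psi_\tau^0\cFcM$ is a strictly larger $\cR_\cX$-module glued from all the $\psi_t^\gamma\wt\cM$ via $\can$ and $\var$; it is this gluing, and not the single slice $V^{>-1}\wt\cM\bmod V^{\geq 0}\wt\cM$ you propose, that makes $P\gr_0^{\rM}\psi_\tau^0\cFcM$ recover the full $\cR_\cX$-module $\wt\cM_{\min_t}$. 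Since your third step rests on the description of $\psi_\tau^0\cFcM$ produced by the faulty second step, it would need to be redone once the correct $V$-filtration is in place.
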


\begin{proof}
This is proved in \cite[Prop\ptbl4.1]{Bibi05b}. One has to notice that the assumption of regularity made in \loccit\ is only used to obtain regularity of $\cFcM$ along $\{\tau=0\}$ and to obtain properties of $\cFcM$ along $\tau=\tau_o\neq0$. These properties are not used here.
\end{proof}

\begin{remarque}
In \cite[Prop\ptbl4.1]{Bibi05b}, we start from a coherent $\cR_\cX$-module $\cM$ which is strictly specializable along $\{t=0\}$, but the proof only uses the localized object~$\wt\cM$.
\end{remarque}

Let $\wt\cT=(\wt\cM',\wt\cM'',\wt C)$ be a strictly specializable object of $\wtRTriples(X)$. In \cite[\S A.2.c]{Bibi01c}, we have defined the object $\cFcT=(\cFcM',\cFcM'',\cFC)$ of $\RTriples(Z)$ and we have proved (\cf \cite[Prop\ptbl5.8]{Bibi05b}):

\begin{proposition}
If $\cT$ is a minimal extension across $\{t=0\}$, then
$$
\cT\isom P\gr_0^{\rM}\psi_\tau^0(\cFcT).\eqno\qed
$$
\end{proposition}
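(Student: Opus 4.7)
The plan is to combine Proposition~\ref{prop:minM}, which handles the $\cR$-module side, with a separate analysis of the sesquilinear pairing. Since $\cT$ is a minimal extension across $\{t=0\}$, we have $\cM'=\cM'_{\min_t}$ and $\cM''=\cM''_{\min_t}$, so applying Proposition~\ref{prop:minM} to the localizations $\wt\cM'$ and $\wt\cM''$ produces canonical $\cR_\cX$-linear isomorphisms
\[
\cM'\isom P\gr_0^{\rM}\psi_\tau^0\cFcM',\qquad \cM''\isom P\gr_0^{\rM}\psi_\tau^0\cFcM''.
\]
This accounts for the first two slots of the desired isomorphism of triples.

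The remaining content is to verify that these isomorphisms intertwine $C$ with $P\psi_\tau^{0,0}\cFC$. I would first unwind the construction of $\cFC$ recalled from \cite[\S A.2.c]{Bibi01c}: on generators $\text{``}e^{-\tau/\hb t}\text{''}\otimes m'$ of $\cFcM'$ and $\ovv{\text{``}e^{-\tau/\hb t}\text{''}\otimes m''}$ of $\ovv{\cFcM''}$, the pairing $\cFC$ is obtained from $\wt C(m',\ovv{m''})$ by a partial Fourier--Laplace transform in the $\tau$-variable, chosen so that $\cFC$ takes values in distributions of moderate growth along $\{\tau=0\}$ and is strictly specializable there. Then, using the general prescription of \cite[(3.6.10)]{Bibi01c}, I would read off $\psi_\tau^0\cFC$ as the coefficient of the pole at $s=0$ of the analytic continuation of $\langle|\tau|^{2s}\cFC,\cbbullet\rangle$, and restrict to the kernel of $\rN$ on $\gr_0^{\rM}$ to obtain $P\psi_\tau^{0,0}\cFC$. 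A direct residue calculation, exploiting the structural identities $t\partiall_\tau\text{``}e^{-\tau/\hb t}\text{''}=-\text{``}e^{-\tau/\hb t}\text{''}$ and $t^2\partiall_t\text{``}e^{-\tau/\hb t}\text{''}=\tau\cdot\text{``}e^{-\tau/\hb t}\text{''}$, should reduce this residue, on the primitive grade-zero part, to the distribution $\wt C(m',\ovv{m''})$ which on $\wt\cM'_{\min_t}\otimes\ovv{\wt\cM''_{\min_t}}$ coincides with $C$.

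The main obstacle is precisely this last Mellin/residue computation: one must check that the primitive part at $\gr_0^{\rM}$ recovers $C$ exactly, with no extra shift or nontrivial factor, and that the identification depends holomorphically on $\hb\in\bS$ in a way compatible with the Tate twist built into the definition of $P\psi_\tau^{0,\ell}$. This analytic step is the content of \cite[Prop\ptbl5.8]{Bibi05b}, which I would invoke to supply the required equality of sesquilinear pairings and thereby yield the claimed isomorphism of triples.
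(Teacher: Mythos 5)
Your proposal is essentially correct and matches the paper's approach: the paper itself supplies no internal proof but simply cites \cite[Prop\ptbl5.8]{Bibi05b}, which is also where you ultimately land. Your decomposition into a module-level step (via Proposition~\ref{prop:minM}, i.e.\ \cite[Prop\ptbl4.1]{Bibi05b}) followed by a pairing compatibility step is a reasonable reading of how that reference is organized, and your sketch of the Mellin/residue verification for $P\psi_\tau^{0,0}\cFC$ is consistent with the construction of $\cFC$ in \cite[\S A.2.c]{Bibi01c}; since the final step is delegated to the cited proposition in both cases, there is no substantive difference.
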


In fact, the construction of $\cFcT$ in \loccit\ is done starting from an object of $\RTriples(X)$, but it is well-defined starting from an object $\wt\cT$ instead of an object $\cT$. It is therefore natural to define in general:

\begin{definition}[of $\wt\cT_{\min_t}$]\label{def:mint}
Let $\wt\cT$ be any strictly specializable object of $\wtRTriples(X)$. We define
\begin{equation}\tag{(\protect\ref{def:mint})$(*)$}\label{eq:defTmin}
\wt\cT_{\min_t}\defin P\gr_0^{\rM}\psi_\tau^0(\cFcT).
\end{equation}
\end{definition}

\section{Strict Deligne specializability}\label{sec:strictspeexp}

In \S\ref{subsec:deligne} we recall the notion of nearby cycles for irregular $\cD$\nobreakdash-modules introduced by P\ptbl Deligne \cite{Deligne83}, in order to explain the analogue for $\cR_\cX$\nobreakdash-modules.

\subsection{Irregular nearby cycles, after Deligne}\label{subsec:deligne}
In this section, we use the setting of \S\ref{subsec:lochyp}, but we work with holonomic $\cD_X$-modules. Let $M$ be a holonomic $\cD_X$-module and let $\wt M$ be its localization away from $\{t=0\}$. We denote by $V^\bbullet\wt M$ its Kashiwara-Malgrange filtration and we have well-defined holonomic $\cD_{X_0}$-modules $\psi_t^\beta\wt M$, for $\reel\beta\in(-1,0]$. It is known that $\psi_t^\beta\wt M$ are zero except for a locally finite number of $\beta$'s. But it may happen that all of them are zero. Therefore, they do not give any interesting information on~$\wt M$ along $\{t=0\}$.

We note that $\psi_t^\beta\wt M=\psi_t^0(t^+L_{\exp-2\pi i\beta}\otimes\wt M)$, where $L_{\exp-2\pi i\beta}$ is the rank one $\CC\{t\}[\tm]$-module with connection twisted by $t^{-\beta}$ and $t^+$ denotes the pull-back of connections by the map $t:X\to D$. In other words, $\psi_t\wt M=\oplus_L\psi_t^0(t^+L\otimes\wt M)$, where~$L$ runs over the $\CC\{t\}[\tm]$-modules with connection having \emph{regular singularity} and which are irreducible (\ie of rank one).

\begin{definition}\label{def:formirred}
Let $N$ be a free $\CC\{t\}[\tm]$-module with connection. We say that it is \emph{formally irreducible} if $\CC\lcr t\rcr[\tm]\otimes_{\CC\{t\}[\tm]}N$ is irreducible.
\end{definition}

According to a classical result of Turrittin and Levelt, the free $\CC\lcr t\rcr[\tm]$-module with connection~$\wh N$ is irreducible if and only if there exists an integer $q\geq1$ such that, denoting by $\rho_q:t_q\mto t=t_q^{q}$ the ramification of order $q$, $\wh N$ is the direct image by $\rho_q$ of a elementary $\CC\lcr t_q\rcr[\tqm]$-module with connection $\wh\cE^{-\varphi}\otimes \wh L$, where $L$ and $\cE^{-\varphi}$ satisfy the following properties:
\begin{enumerate}
\item
$L$ is regular and has rank one,
\item\label{enum:varphitirred}
$\cE^{-\varphi}\defin(\CC\{t_q\}[\tqm],d-d\varphi)$ with $\varphi\in\tqm\CC[\tqm]$, and for any $q$th root of unity $\zeta\neq1$, $\varphi(\zeta t_q)\neq\varphi(t_q)$.
\end{enumerate}

\begin{definition}[Irregular nearby cycles]\label{def:irregnearby}
If $M$ is left holonomic $\cD_X$-module, the irregular nearby cycles $\psi_t^\Del M$ are defined as
\begin{equation}\tag*{$(\protect\ref{def:irregnearby})(*)$}\label{eq:irregnearby}
\psi_t^\Del M\defin\tbigoplus_{\substack{N\text{ form.}\\ \text{irred.}}}\psi_t^0(t^+N\otimes M).
\end{equation}
\end{definition}

Let us note that $\psi_t^\Del M$ only depends on the localized module $\wt M$. In dimension one, the theorem of Levelt-Turrittin for $\wt M$ can be restated by saying that giving the formalized module $\wt M^\wedge$ is equivalent to giving $\psi_t^\Del\wt M$, which is a \emph{finite dimensional} graded vector space (the grading indices being the formally irreducible $N$'s) equipped with an automorphism.

It will be more convenient to use the following expression for $\psi_t^\Del M$. We say that $\varphi\in\tqm\CC[\tqm]$ is $t$-irreducible if it satisfies Condition \eqref{enum:varphitirred} above, \ie if $\rho_{q,+}\cE^{-\varphi}$ is irreducible. Then,
\begin{equation}
\psi_t^\Del M=\tbigoplus_{\varphi\text{ $t$-irred.}}\tbigoplus_{\reel\beta\in(-1,0]}\psi_t^\beta(t^+\rho_{q,+}\cE^{-\varphi}\otimes M).
\end{equation}

In dimension $\geq2$, the situation can be more complicated than in dimension one. Let us consider the following examples:

\begin{exemple}\label{ex:nearbyirreg}
$X_0=\Afu$ is the affine complex line with coordinate~$x$ and~$X$ is an open set in $X_0\times\CC$ (coordinates $x,t$). Let $\wt M$ be equal to $\cO_X[\tm]$ as a $\cO_X$-module, equipped with the connection $e^{-x/t}\circ d\circ e^{x/t}=d+dx/t-xdt/t^2$. We denote by $\ext$ the generator $1$ of $\wt M$. It satisfies thus both relations
\[
(t^2\partial_t+x)\ext=0\quad\text{and}\quad (t\partial_x-1)\ext=0.
\]
The second relation implies that the $V$-filtration along $t=0$ is constant equal to~$\wt M$. The same computation can be done after tensoring by any formally irreducible connection in dimension one. It follows that $\psi_t^\Del\wt M=0$, and the irregular nearby cycles along $\{t=0\}$ do not seem to bring any information on~$\wt M$ along $\{t=0\}$.

Instead of considering $\ext$ as in the previous example, we consider $\exxt$, defined in a similar way. Then one can show that $\psi_t^\Del\wt M$ is a $\cD_{X_0}$-module supported at $x=0$ and having $t$-monodromy equal to $-\id$.
\end{exemple}

\begin{remarque}\label{rem:finitude}
When the morphism $t:\Supp M\to\Afu$ is algebraic, Deligne \cite{Deligne83} proved that the sum in \ref{eq:irregnearby} is finite. One can conjecture that, in the analytic case we consider here, the same result holds in the neighbourhood of any compact set. This should be a consequence of the existence of a good formal structure after blowing-up.
\end{remarque}

\subsection{Strict Deligne specializability for $\cR_\cX[\tm]$-modules}\label{subsec:exptwist}
We keep notation as in \S\ref{subsec:deligne}. We denote by $D$ (\resp $D_q$) some open disc with coordinate $t$ (\resp $t_q$) and by $\rho_q:D_q\to D$, $t_q\mto t=t_q^{q}$, the ramification of order $q$. As above, we set~$\cD$ (\resp $\cD_q$) for $D\times\Omega_0$ (\resp $D_q\times\Omega_0$). We now define $\cE^{-\varphi/\hb}$ as the free rank-one $\cO_{\cD_q}[\tqm]$-module with generator denoted by $\ephi$ and with the $\hb$-connection $\hb d-d\varphi$. Then $\rho_{q,+}\cE^{-\varphi/\hb}$ is a free $\cO_{\cD}[\tm]$-module of rank~$q$ with $\hb$-connection.

Let $f:X\to D$ be a holomorphic function. The pull-back $f^+\rho_{q,+}\cE^{-\varphi/\hb}$ is a free $\cO_\cX[1/f]$-module with a $\hb$-connection, hence is a left $\cR_\cX[1/f]$-module.

\begin{definition}
We say that a $\cR_\cX[1/f]$-module $\wt\cM$ is \emph{strictly Deligne specializable} along \hbox{$f=0$} if, for any $t$-irreducible $\varphi\in\tqm\CC[\tqm]$, $f^+\rho_{q,+}\cE^{-\varphi/\hb}\otimes_{\cO_\cX[1/f]}\wt\cM$ is strictly specializable along $f=0$. We then set
\begin{equation}
\psi_t^\Del\wt\cM=\tbigoplus_{\varphi\text{ $t$-irred.}}\tbigoplus_{\reel\beta\in(-1,0]}\psi_f^\beta(f^+\rho_{q,+}\cE^{-\varphi/\hb}\otimes_{\cO_\cX[1/f]}\wt\cM).
\end{equation}

We say that a morphism $\mu:\wt\cM_1\to\wt\cM_2$ between strictly Deligne specializable $\cR_\cX[1/f]$-modules is \emph{strictly Deligne specializable} along $f=0$ if, for any $t$-irreducible $\varphi\in\tqm\CC[\tqm]$, the induced morphism
\[
\id\otimes\mu:f^+\rho_{q,+}\cE^{-\varphi/\hb}\otimes_{\cO_\cX[1/f]}\wt\cM_1\to f^+\rho_{q,+}\cE^{-\varphi/\hb}\otimes_{\cO_\cX[1/f]}\wt\cM_2
\]
is strictly specializable in the sense of \cite[Def\ptbl3.3.8(2)]{Bibi01c}, \ie $\ker$ and $\coker$ of $\psi_t^\beta(\id\otimes\mu)$ are strict for any $\beta$.
\end{definition}

This definition extends to $\cR_\cX$-modules and $\cR_\cX$-linear morphisms: we ask that such a module or morphism is strictly specializable in the sense of \cite[Def\ptbl3.3.8]{Bibi01c} and that the localized object satisfies the previous definition.

We now define the twist by $f^+\rho_{q,+}\cE^{-\varphi/\hb}$ on objects of $\wtRTriples(X)$. We have a natural pairing $\wt c_{q,-\varphi}$ on $\cE^{-\varphi/\hb}$ which takes values in functions with moderate growth on $D_q^*$ depending continuously on $\hb\in\bS$, defined, on the generator $\ephi$ of $\cE^{-\varphi/\hb}$ by
\begin{equation}\label{eq:wtcprime}
\wt c_{q,-\varphi}(\ephi,\ovv \ephi)=e^{\hb\ov\varphi-\varphi/\hb}.
\end{equation}
Since $\hb\in\bS$, we have $1/\hb=\ov\hb$, where $\ov\hb$ denotes the usual complex conjugate of~$\hb$, so the exponent in the exponential term reads $2i\im(\hb\ov\varphi)$ and the function $e^{\hb\ov\varphi-\varphi/\hb}$ has moderate growth, as well as all its derivatives, along $\{t_q=0\}$.

The direct image $\wt c_{-\varphi}$ of $\wt c_{q,-\varphi}$ by $\rho_q$ is defined as usual: a $\cO_{\cD}[\tm]$-basis of $\rho_{q,+}\cE^{-\varphi/\hb}$ consists of $\ephi,t_q\ephi,\dots,t_q^{q-1}\ephi$, and we set, for any test function~$\chi$ on~$D$ which is infinitely flat at $t=0$,
\begin{multline*}
\Big\langle\wt c_{-\varphi}(t_q^{k}\ephi,\ovv{t_q^\ell\ephi}),\chi(t)\itwopi \frac{dt}{t}\wedge \frac{d\ovt}{\ovt}\Big\rangle\\
=\int_{D_q}t_q^{k}\ovt_q^\ell e^{\hb\ov\varphi-\varphi/\hb}\chi(t_q^{q})q^2\itwopi\frac{dt_q}{t_q}\wedge \frac{d\ovt_q}{\ovt_q}.
\end{multline*}
We note that $\wt c_{-\varphi}$ is nothing but the trace of $\wt c_{q,-\varphi}$ by $\rho_q$, and is also a $C^\infty$ function with moderate growth on~$D^*$, depending holomorphically on~$\hb$.

If $\wt C:\wt\cMS'\otimes_{\cOS}\ovv{\wt\cMS''}\to\Dbh{X}[1/f]$ is a sesquilinear pairing, then one defines in a natural way a pairing $\wt C_{-\varphi}$ after twisting each term by $f^+\rho_{q,+}\cE^{-\varphi/\hb}$, using the fact that $f^*\wt c_{-\varphi}(t_q^{k}\ephi,\ovv{t_q^\ell\ephi})$ is a multiplier on $\Dbh{X}[1/f]$. This construction defines $f^+\rho_{q,+}\cE^{-\varphi/\hb}\otimes\nobreak\wt\cT$ as an object of $\wtRTriples(X)$.

For a strictly Deligne specializable $\wt\cM$ (or $\wt\cT$), we can therefore consider the $(\varphi,\beta)$-irregular nearby cycles defined as
\begin{align*}
\psi_f^{\varphi,\beta}\wt\cM&\defin\psi_f^\beta(f^+\rho_{q,+}\cE^{-\varphi/\hb}\otimes\wt\cM)\\
\tag*{or} \psi_f^{\varphi,\beta}\wt\cT&\defin\psi_f^\beta(f^+\rho_{q,+}\cE^{-\varphi/\hb}\otimes\wt\cT),
\end{align*}
where the functor $\psi_f^\beta$ is the functor used in \S\ref{subsec:strictspe} (that is, $\psi_t^\beta\circ i_{f,+}$).

\begin{remarque}
These definitions and constructions extend in a natural way to $\cR_\cX$-modules or objects of $\RTriples(X)$, as tensoring with $f^+\rho_{q,+}\cE^{-\varphi/\hb}$ does not distinguish between $\cM$ and $\wt\cM$. We denote by $\Psi_f^{\varphi,\beta}$ the corresponding functor.
\end{remarque}

\begin{proposition}[Regularity]\label{prop:strictspereg}
Let $\cM$ be a $\cR_\cX$-module which is strictly specializable along $f=0$ and which is regular along $f=0$ (in the sense of \cite[\S3.1.d]{Bibi01c}). Then $\cM$ is strictly Deligne specializable along $f=0$ and $\psi_f^{\varphi,\beta}\cM=0$ if $\varphi\neq0$.
\end{proposition}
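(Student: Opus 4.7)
The plan is to reduce, via Definition \ref{def:strictspef} and the graph embedding $i_f$ (which is exact, compatible with the construction $\otimes\rho_{q,+}\cE^{-\varphi/\hb}$, and preserves regularity), to the case where $f=t$ is a coordinate on~$X$. Then, using the projection formula
\[
\rho_{q,+}\cE^{-\varphi/\hb}\otimes_{\cO_\cD[\tm]}\wt\cM\isom\rho_{q,+}\bigl(\cE^{-\varphi/\hb}\otimes_{\cO_{\cD_q}[\tqm]}\rho_q^+\wt\cM\bigr),
\]
valid because $\rho_q$ is finite and étale over $D\setminus\{0\}$, together with the compatibility of $\rho_{q,+}$ with strict specializability and with the functor $\psi$, I would further reduce to the case $q=1$, $\varphi\in t^{-1}\CC[t^{-1}]\setminus\{0\}$. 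The remaining assertion is that, for $\wt\cM$ regular and strictly specializable along $\{t=0\}$, the module $\cN\defin\cE^{-\varphi/\hb}\otimes\wt\cM$ is strictly specializable along $\{t=0\}$ and satisfies $\psi_t^\beta\cN=0$ for every $\beta$.

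The core computation will be the identity
\[
(t\partiall_t-\beta\star\hb)(m\otimes\ephi)=\bigl((t\partiall_t-\beta\star\hb-t\partial_t\varphi)\,m\bigr)\otimes\ephi
\]
on $\cN\cong\wt\cM\otimes\ephi$, combined with the fact that $-t\partial_t\varphi$ is a nonzero element of $t^{-d}\cO_\cX$ with $d\geq1$ equal to the pole order of $\varphi$. From this I would deduce that $t\partiall_t-\beta\star\hb$ acts bijectively on $\cN$ for every $\beta\in\CC$. Regularity of $\wt\cM$ provides locally a coherent $\cO_\cX$-lattice $F\subset V^0\wt\cM$ on which $t\partiall_t$ acts by a matrix $A(t,\hb)$ with entries in $\cO_\cX$. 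On $F\otimes\ephi$ the operator above factors as
\[
T_\beta=-t\partial_t\varphi\cdot\bigl(\mathrm{Id}-(-t\partial_t\varphi)^{-1}(A(t,\hb)-\beta\star\hb\cdot\mathrm{Id})\bigr),
\]
the inner operator being of the form $\mathrm{Id}-R$ with $R:F\to t^dF$ (since $(-t\partial_t\varphi)^{-1}\in t^d\cO_\cX$). The geometric series $\sum_{k\geq0}R^k$ should converge analytically on a sufficiently small neighbourhood of $\{t=0\}$, uniformly in $\hb\in\Delta_0$, producing $T_\beta^{-1}=(\mathrm{Id}-R)^{-1}\cdot(-t\partial_t\varphi)^{-1}$ as an operator on $F\otimes\ephi$, which then extends to $\cN$ by $\cR_\cX[\tm]$-linearity.

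Once bijectivity of $t\partiall_t-\beta\star\hb$ on $\cN$ is established for every $\beta$, the Kashiwara--Malgrange filtration of $\cN$ along $\{t=0\}$ must be the constant filtration $V^\beta\cN=\cN$, with all graded pieces vanishing; strict specializability of $\cN$ is then automatic (the Bernstein relation is trivially satisfied and the conditions on the graded pieces---strictness and generation by indices of real part in $[-1,0]$---are vacuous), and $\psi_t^\beta\cN=0$ follows for every $\beta$. Running the reductions backwards will yield strict Deligne specializability of $\cM$ along $\{f=0\}$ together with the vanishing $\psi_f^{\varphi,\beta}\cM=0$ for every $\varphi\neq0$. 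The main technical obstacle I anticipate is the passage from formal $t$-adic invertibility of the geometric series to genuine analytic invertibility on a fixed neighbourhood of $\{t=0\}$, uniformly in $\hb\in\Delta_0$: this requires $\cO_\cX$-boundedness of $A(t,\hb)$ near $\{t=0\}$, which is precisely the content of the regularity hypothesis---without it, spurious poles in $A$ could compete with those of $-t\partial_t\varphi$ and destroy the contraction property of $R$.
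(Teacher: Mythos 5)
Your overall strategy and the paper's are the same at the start: reduce to $f=t$ via the graph embedding, reduce to $q=1$ by ramification (using the projection formula and the fact that $\rho_q^+\wt\cM$ stays regular and strictly specializable), and then exploit the identity $(t\partiall_t-\beta\star\hb)(m\otimes\ephi)=\bigl((t\partiall_t-\beta\star\hb-t\partial_t\varphi)m\bigr)\otimes\ephi$ together with the dominance of the pole of $t\partial_t\varphi$. But the execution in the last step has a genuine gap. You write $T_\beta=-t\partial_t\varphi\cdot(\mathrm{Id}-R)$ with $R=(-t\partial_t\varphi)^{-1}(A(t,\hb)-\beta\star\hb\,\mathrm{Id})$ an $\cO_\cX$-linear map, declaring that regularity makes $t\partiall_t$ ``act by a matrix $A$''. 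This is not correct: in an $\cO_\cX$-basis of a lattice $F$, the operator $t\partiall_t$ is $\hb t\partial_t+A(t,\hb)$, a genuine first-order differential operator, not multiplication by a matrix. So $T_\beta$ is $\hb t\partial_t+(A-\beta\star\hb-t\partial_t\varphi)$, and the factorization you propose omits the derivative part. If you repair this by setting $R=(t\partial_t\varphi)^{-1}(\hb t\partial_t+A-\beta\star\hb)$, then $R$ is no longer $\cO_\cX$-linear and the ``geometric series converges'' step becomes a nontrivial theorem about analytic (not merely $t$-adic) solvability of a first-order system with an irregular singular leading term --- essentially a piece of Hukuhara--Turrittin theory --- rather than a contraction estimate. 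You partly acknowledge the difficulty at the end, but the fix is not just ``$A$ is $\cO_\cX$-bounded''; the derivative operator itself is what spoils the contraction argument.

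There is also a more structural issue: bijectivity of $t\partiall_t-\beta\star\hb$ on $\cN$ is a stronger claim than what the statement requires, and even if true it does not by itself yield the coherence needed to declare the constant filtration to be the Kashiwara--Malgrange filtration; one still has to exhibit a good $V$-filtration. The paper sidesteps all of this by proving directly that $V^0(\cR_\cX)\cdot\bigl(\cE^{-\varphi/\hb}\otimes V_\phbo^{>-1}\wt\cM\bigr)=\cE^{-\varphi/\hb}\otimes\wt\cM$, using regularity only to know that $V_\phbo^{>-1}\wt\cM$ is $\cO_\cX$-coherent, and then a one-line induction: multiplying the key identity by $t^{p-1}$ (with $p$ the pole order of $\varphi$) recovers $\ephi\otimes t^{-1}m$ from $V^0(\cR_\cX)\cdot(\cE^{-\varphi/\hb}\otimes V_\phbo^{>-1}\wt\cM)$. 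This generation argument is purely algebraic over $\cO_\cX$, needs no analytic convergence, and immediately gives that the constant filtration is good and hence that all $\psi_t^\beta$ vanish. I would recommend replacing the bijectivity/contraction approach by this generation-by-$V_0(\cR_\cX)$ argument; your reductions and your identity are exactly the right starting point, but the final step needs to be rethought.
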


\begin{proof}
It is enough to consider the case where $f$ is a projection, \ie the situation of an open set $X$ of $X_0\times D$ as in \S\ref{subsec:strictspe}. Let us first consider the case where $\varphi\in\tm\CC[\tm]\moins\{0\}$. Let us work near $\hbo\in\Omega_0$ and let us consider the canonical $V$-filtration $V_\phbo^\cbbullet\wt\cM$. By assumption, $V_\phbo^{>-1}\wt\cM$ is $\cR_{\cX/D}$-coherent. It is then enough to prove that
\[
V^0(\cR_\cX)\cdot\big(\cE^{-\varphi/\hb}\otimes V_\phbo^{>-1}\wt\cM\big)=\cE^{-\varphi/\hb}\otimes\wt\cM,
\]
because this will imply that the constant filtration equal to $\cE^{-\varphi/\hb}\otimes\nobreak\wt\cM$ is a good $V$-filtration, hence the Bernstein polynomial exists and is constant.

One proves by induction on $k\geq0$ that, for any local section~$m$ of $V_\phbo^{>-1}\wt\cM$, denoting by $\ephi$ the $\cO_\cD[\tm]$-generator of $\cE^{-\varphi/\hb}$, $\ephi\otimes t^{-k}m$ belongs to $V^0(\cR_\cX)\cdot\big(\cE^{-\varphi/\hb}\otimes V_\phbo^{>-1}\wt\cM\big)$. Let us show this for $k=1$ for instance. One has
\[
t\partiall_t(\ephi\otimes m)=\ephi\otimes[t\partiall_tm-(t\partial_t\varphi)m],
\]
so that, if $p$ is the order of the pole of $\varphi$, multiplying both sides by $t^{p-1}$, we find $\ephi\otimes t^{-1}m\in V^0(\cR_\cX)\cdot\big(\cE^{-\varphi/\hb}\otimes V_\phbo^{>-1}\wt\cM\big)$. The induction is then easy.

When $q\geq2$, the same argument can be applied after the ramification $\rho_q$, as $\rho_q^+\wt\cM$ remains strictly specializable and regular along $t_q=0$, as explained below.
\end{proof}

\subsection{Ramification}\label{subsec:ram}

We keep notation as in \S\ref{subsec:lochyp}. Let us fix an integer $q\geq2$ and let us denote by $\rho_q:X_0\times D_q\to X_0\times D$ the ramification $t_q\mto t=t_q^{q}$. We denote then by $X_q$ the inverse image of $X$ by $\rho_q$ ($X_q$ is smooth) and by $\rho_q:X_q\to X$ the restricted morphism. The functor $\rho_q^+$ (inverse image by $\rho_q$) is well defined from the category of (left) $\cR_\cX$-modules to that of $\cR_\cXq$-modules as follows: if $\cM$ is a left $\cR_\cX$-module,
\begin{itemize}
\item
as a $\cO_\cXq$-module, we set $\rho_q^+\cM=\rho_q^*\cM=\cO_\cXq\otimes_{\rho_q^{-1}\cO_\cX}\rho_q^{-1}\cM$;
\item
for coordinates $x_i$ on~$X_0$, the action of $\partiall_{x_i}$ is the natural one, \ie $\partiall_{x_i}(1\otimes m)=1\otimes\partiall_{x_i}m$;
\item
the action of $\partiall_{t_q}$ is defined, by a natural extension using Leibniz rule, from
\[
\partiall_{t_q}(1\otimes m)=qt_q^{q-1}\otimes\partiall_tm.
\]
\end{itemize}

However, given an object $\cT=(\cM',\cM'',C)$ of $\RTriples(X)$, the object $\rho_q^+\cT$ is not clearly defined: the pull-back by $\rho_q$ of a distribution on~$X$ is not defined, since the trace by $\rho_q$ of a test form on~$X_q$ can have singularities.

On the other hand, the pull-back of a $\cR_\cX[\tm]$-module is defined similarly and the pull-back of a \emph{moderate} distribution along $\{t=0\}$ is well-defined as a moderate distribution along $\{t_q=0\}$. Therefore, the functor $\rho_q^+$ is well defined from $\wtRTriples(X)$ to $\wtRTriples(X_q)$.

\begin{proposition}\label{prop:strictsperamif}
Assume that $\wt\cM$ is strictly specializable along $\{t=0\}$. Then $\rho_q^+\wt\cM$ is so along $\{t_q=0\}$.
\end{proposition}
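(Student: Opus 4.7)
The plan is to construct the Kashiwara--Malgrange $V$-filtration of $\rho_q^+\wt\cM$ explicitly from that of $\wt\cM$ and then verify the strict specializability axioms. The work is local, and since $\cO_\cXq$ is free of rank $q$ over $\rho_q^{-1}\cO_\cX$ with basis $1,t_q,\dots,t_q^{q-1}$, one has a decomposition $\rho_q^+\wt\cM=\bigoplus_{j=0}^{q-1}t_q^j\otimes\rho_q^{-1}\wt\cM$ as $\rho_q^{-1}\cO_\cX$-modules. Since the ramification should rescale $V$-indices by a factor of $q$ while $t_q$ shifts them by $1$, the natural candidate is
\[
V^a_\phbo(\rho_q^+\wt\cM):=\bigoplus_{j=0}^{q-1}t_q^j\otimes V^{(a-j)/q}_\phbo\wt\cM,\qquad a\in\RR.
\]

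The verification splits into routine pieces. Stability under $V^0\cR_\cXq$ follows by combining $t_q\cdot(t_q^j\otimes m)=t_q^{j+1}\otimes m$, the relation $\partiall_{t_q}(1\otimes m)=qt_q^{q-1}\otimes\partiall_tm$, and the identity $t_q^q=t$ (used to reduce exponents $\geq q$), which together give
\[
t_q\partiall_{t_q}(t_q^j\otimes m)=\hb j\,(t_q^j\otimes m)+q\cdot t_q^j\otimes t\partiall_tm;
\]
from this one deduces $t_q\cdot V^a\subset V^{a+1}$, $\partiall_{t_q}\cdot V^a\subset V^{a-1}$, and the compatibility with transverse derivations. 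Coherence of each $V^a_\phbo(\rho_q^+\wt\cM)$ over $V^0\cR_\cXq$ is inherited from the $V^0\cR_\cX$-coherence of the $V^{(a-j)/q}_\phbo\wt\cM$ together with the flatness of $\cO_\cXq$ over $\rho_q^{-1}\cO_\cX$. The graded pieces decompose as
\[
\gr^a_V(\rho_q^+\wt\cM)\cong\tbigoplus_{j}t_q^j\otimes\gr^{(a-j)/q}_V\wt\cM,
\]
whose strictness is immediate by the same flatness and the assumed strictness of $\gr^\bbullet_V\wt\cM$. Finally, the generation of all graded pieces by those in the fundamental range $[-1,0]$ under $t_q,\partiall_{t_q}$ follows from the analogous property of $\wt\cM$: the pieces $t_q^j\otimes\gr^\beta_V\wt\cM$ with $\reel\beta\in[-1,0]$ and $j=0,\dots,q-1$ have grades filling a range of length $q$ in the new indexing, and iterated application of $t_q$ and $\partiall_{t_q}$ reaches all other grades.

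The main obstacle is to produce a Bernstein polynomial of the prescribed form $\prod_i(t_q\partiall_{t_q}-\gamma_i\star\hb)$ with real~$\gamma_i$. From the eigenvalue computation above, $t_q\partiall_{t_q}$ acts on $t_q^j\otimes\gr^\beta_V\wt\cM$ as $\hb j+q\beta\star\hb$ modulo nilpotent terms; for $j\neq0$ this is not literally $\gamma\star\hb$ for a single real $\gamma$, so one cannot simply read off the indices from the target grade. The resolution is to work near a fixed $\hbo\in\Omega_0$ (as built into the definition of $V_\phbo^\bbullet$) and to use that the polynomial need only map $V^a$ into $V^{>a}$ rather than realize a specific eigenvalue: starting from a Bernstein polynomial $\prod_i(t\partiall_t-\beta_i\star\hb)$ for $\wt\cM$, one assembles a product of factors $t_q\partiall_{t_q}-\gamma\star\hb$ with $\gamma$ running through the appropriate real numbers of the form $q\beta_i+j$ (taking sufficient multiplicity), and checks directly from the displayed identity that it sends $V^a$ into $V^{>a}$. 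Uniqueness of the Kashiwara--Malgrange filtration then identifies our candidate with the canonical $V$-filtration of $\rho_q^+\wt\cM$, and the proposition follows.
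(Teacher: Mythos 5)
Your construction is exactly the paper's: the candidate filtration $V_\phbo^b(\rho_q^+\wt\cM)=\bigoplus_{k=0}^{q-1}t_q^k\otimes V_\phbo^{(b-k)/q}\wt\cM$ and the identity $t_q\partiall_{t_q}(t_q^k\otimes m)=k\hb(t_q^k\otimes m)+q(t_q^k\otimes t\partiall_tm)$ are the two ingredients the paper uses, after which it simply asserts the required properties with a pointer to \cite[Lemma~3.3.4]{Bibi01c}. Your detailed check of stability, coherence, strictness of the graded pieces and generation from the fundamental range fills in what the paper leaves to the reader, and is sound.

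Where you depart is the final paragraph on the Bernstein polynomial, and that is where the argument breaks. Taking $\beta\star\hb=\reel\beta+i(\hb^2+1)\im\beta/2$ at face value you are right that $k\hb+q\,\beta\star\hb$ is not of the form $\gamma\star\hb$ for $k\neq0$; but the workaround you propose is defeated by the same arithmetic. With that formula $(q\beta+j)\star\hb=q\,\beta\star\hb+j$, so $\big(t_q\partiall_{t_q}-(q\beta+j)\star\hb\big)(t_q^j\otimes m)=q\,t_q^j\otimes(t\partiall_t-\beta\star\hb)m+j(\hb-1)(t_q^j\otimes m)$, and the stray $j(\hb-1)$ term is never eliminated by iterating, so the product you assemble does \emph{not} send $V^a$ into $V^{>a}$. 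The step you claim ``one checks directly'' does not go through.

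The missing point is that $\beta\mto\beta\star\hb$ must satisfy $(\beta+1)\star\hb=\beta\star\hb+\hb$, because multiplication by $t$ raises the $V$-index by $1$ and shifts the $t\partiall_t$-eigenvalue by $\hb$ (from $t\partiall_t\cdot t=t\cdot t\partiall_t+\hb t$). The paper's own replacement rule ($\partiall_tt+\alpha\star\hb$ replaced by $t\partiall_t-\beta\star\hb$ with $\beta=-\alpha-1$) and the identification $\rN_{t_q}=q\rN_t$ in Remark~\ref{rem:ramif} both presuppose this shift rule, so the formula for $\star$ displayed in \S\ref{subsec:strictspe} cannot be the one actually in force (it appears to be a misprint of the formula in \cite[\S0.9]{Bibi01c}). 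Under the shift rule $k\hb+q\,\beta\star\hb=(q\beta+k)\star\hb$ exactly, the candidate filtration already has a Bernstein polynomial of the prescribed product form with no further work, and the obstacle you built your last paragraph around is not there; the patch you proposed for it would, in any case, not have closed it.
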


\begin{proof}[\proofname\ of Proposition \ref{prop:strictsperamif}]
Near $(x_o,\hbo)$, one shows that any local section of $\rho_q^+\wt\cM$ satisfies a Bernstein functional equation, using that
\[
t_q^{k}\otimes t\partiall_tm=\frac1q(t_q\partiall_{t_q}-k\hb)(t_q^{k}\otimes m).
\]
If we identify $\rho_q^+\wt\cM$ with $\bigoplus_{k=0}^{q-1}t_q^{k}\otimes\wt\cM$ (with the suitable $\cR_\cXq[\tqm]$-module structure on the right-hand term), the filtration $V_\phbo^\cbbullet\rho_q^+\wt\cM$ defined by the formula
\begin{equation}\label{eq:Vpi+}
V_\phbo^b\rho_q^+\wt\cM=\tbigoplus_{k=0}^{q-1}(t_q^{k}\otimes V_\phbo^{(b-k)/q}\wt\cM),
\end{equation}
satisfies all properties required for the Kashiwara-Malgrange filtration (\cf \cite[Lemma~3.3.4]{Bibi01c}).
\end{proof}

\begin{remarque}\label{rem:ramif}
For any $\beta\in\CC$, we then have
\[
\psi_{t_q}^\beta\rho_q^+\wt\cM\simeq\tbigoplus_{k=0}^{q-1}\psi_t^{(\beta-k)/q}\wt\cM,
\]
and, under this identification, the nilpotent endomorphism $\rN_{t_q}$ corresponds to the direct sum of the nilpotent endomorphisms $q\rN_t$. Therefore, we have a similar relation for the graded modules with respect to the monodromy filtration and the corresponding primitive submodules.

Similarly, if $\wt\cT$ is strictly specializable, we have
\[
\psi_{t_q}^\beta\rho_q^+\wt\cT\simeq\tbigoplus_{k=0}^{q-1}\psi_t^{(\beta-k)/q}\wt\cT.
\]
Indeed, the point is to compute $\psi_{t_q}^\beta\rho_q^+\wt C$. Let $m'$ (\resp $m''$) be a local section of $V_\phbo^{(b-k')/q}\wt\cM'$ (\resp $V_\phbo^{(b-k'')/q}\wt\cM''$) with $k',k''\in[0,q-1]\cap\NN$. Up to translating $b$ by an integer, we can assume that $\wt C(m',\ovv{m''})$ takes values in $\Dbh{X}$. Let $\varphi$ be a form of maximal degree on~$X_0$ (with coefficients depending on $\hb\in\bS$) and let $\chi$ be a $C^\infty$ function on~$D_q$ with compact support, identically equal to $1$ near $t_q=0$. In the following, it will not matter to assume that $\chi$ is a function on~$D$.

Setting $\alpha=-\beta-1$, and denoting by $[m]$ the class of~$m$ in the graded piece of the $V$-filtration, we have by definition
\begin{multline*}
\big\langle\psi_{t_q}^\beta\rho_q^+\wt C([t_q^{k'}\otimes m'],\ovv{[t_q^{k''}\otimes m'']}),\varphi\big\rangle\\
=\res_{s=\alpha\star\hb/\hb}\big\langle\vert t_q\vert^{2s}\rho_q^+\wt C(t_q^{k'}\otimes m',\ovv{t_q^{k''}\otimes m''}),\varphi\wedge\chi(t)\itwopi dt_q\wedge d\ov{t_q}\big\rangle.
\end{multline*}
Up to a positive constant, the right-hand term is the residue of
\[
\Big\langle\wt C(m',\ovv{m''}),\varphi\wedge\chi(t)\mt^{2(s+1)/q}\tr(t_q^{k'}\ov{t_q^{k''}})\itwopi \frac{dt}{t}\wedge \frac{d\ovt}{\ovt}\Big\rangle.
\]
As $k',k''$ belong to $[0,q-1]$, the trace $\tr(t_q^{k'}\ov{t_q^{k''}})$ is zero unless $k'=k''$, giving therefore the desired formula for $\psi_{t_q}^\beta\rho_q^+\wt C$.
\end{remarque}

Let now $X$ be a complex manifold and let $f:X\to D$ be a holomorphic function. We set $X'=X\times D$ and $X'_0=X\times\{0\}\simeq X$. We denote by $i_f:X\hto X'$ the inclusion $x\mto (x,f(x))$ and by $t$ the coordinate on the factor $D$. For $q\geq1$, we denote by $X_q$ the inverse image of $i_f(X)$ in $X'_q$ by $\rho_q:X'_q\to X'$. If we identify $X'_q$ with $X\times D_q$ (coordinate $t_q$ on the factor $D_q$) so that $\rho_q(x,t_q)=(x,t_q^{q})$, then $X_q$ is the subset of $X'_q$ defined by $f(x)-t_q^{q}=0$. It can be singular.

If $\wt\cM$ is a $\cR_\cX[1/f]$-module (\resp if $\wt\cT$ is an object of $\wtRTriples(X)$) which is strictly specializable along $\{f=0\}$ (\cf Definition \ref{def:strictspef}), we define $\rho_q^+\wt\cM$ (\resp $\rho_q^+\wt\cT$) as $\rho_q^+(i_{f,+}\wt\cM)$ (\resp $\rho_q^+(i_{f,+}\wt\cT)$) as in the beginning of this subsection. This is a $\cR_{\cX'_q}[\tqm]$-module (\resp an object of $\wtRTriples(X'_q)$) supported on~$X_q$.

\begin{corollaire}
If $\wt\cM$ (\resp $\wt\cT$) is strictly specializable along $\{f=0\}$, then $\rho_q^+(i_{f,+}\wt\cM)$ (\resp $\rho_q^+(i_{f,+}\wt\cT)$) is so along $t_q=0$.\qed
\end{corollaire}

\begin{corollaire}
$\wt\cM$ (\resp $\wt\cT$) is strictly Deligne specializable along $\{f=0\}$ if and only if, for any $q\geq1$ and for any $\varphi\in\tqm\CC[\tqm]$, $\cE^{-\varphi/\hb}\otimes_{\cO_{\cX'_q}}\rho_q^+(i_{f,+}\wt\cM)$ (\resp $\cE^{-\varphi/\hb}\otimes_{\cO_{\cX'_q}}\rho_q^+(i_{f,+}\wt\cT)$) is so along $\{t_q=0\}$.
\end{corollaire}

\begin{proof}
Let us prove the `only if' part. We can reduce to the case where $f$ is a coordinate $t$. We use the projection formula to get that $t^+\rho_{q,+}\cE^{-\varphi/\hb}\otimes_{\cO_\cX[\tm]}\wt\cM=\rho_{q,+}(t_q^+\cE^{-\varphi/\hb}\otimes\rho_q^+\wt\cM)$. We are thus reduced to proving that strict specializability is preserved by the direct image $\rho_{q,+}$. This follows from \cite[Th\ptbl3.3.15]{Bibi01c}, as the restriction of $\rho_q$ to $t_q=0$ is equal to the identity.

For the `if' part, we apply Proposition \ref{prop:strictsperamif} and we use that $\rho_q^+\rho_{q,+}\cE^{-\varphi/\hb}$ decomposes as the direct sum $\oplus_{\zeta^q=1}\cE^{-\varphi\circ\mu_\zeta/\hb}$, where $\mu_\zeta$ is the multiplication by~$\zeta$.
\end{proof}

\subsection{Compatibility with proper direct images}
Let $g:X\to Y$ be a morphism between complex analytic manifolds and let $\cM$ be a good $\cR_\cX$-module (\ie which has good filtrations on compact sets of $X$, \cf \cite[\S1.1.c]{Bibi01c}). Let $f:Y\to\CC$ be a holomorphic function and assume that $\cM$ is strictly specializable along $(f\circ g)^{-1}(0)$. If~$g$ is proper on the support of $\cM$ then (\cf \cite[Th\ptbl3.3.15]{Bibi01c}) for all $j$ and $\beta$ with $\reel\beta\in(-1,0]$, there is a natural isomorphism $\Psi_f^\beta\cH^jg_+\cM\simeq\cH^jg_+\Psi_{f\circ g}^\beta\cM$ \emph{if we assume that $\cH^jg_+\Psi_{f\circ g}^\beta\cM$ are strict for all $j$ and $\beta$ with $\reel\beta\in(-1,0]$}. Under the same assumption, we have an isomorphism for vanishing cycles $\psi_f^{-1}\cH^jg_+\cM\simeq\cH^jg_+\psi_{f\circ g}^{-1}\cM$. Moreover, this isomorphism extends to objects of $\RTriples(X)$ (\cf \cite[\S3.6.c]{Bibi01c}).

We now wish to extend this result to the objects $\Psi_f^\Del \cM$ and $\Psi_f^\Del \cT$. Let us first note that we will not be concerned with the corresponding vanishing cycles (vanishing cycles are used in \cite{Bibi01c} mainly in \S6.3 to insure the S-decomposability of direct images; we will use them here in the same way, without any twist). It will therefore be more convenient to work with localized modules and to consider direct images of $\cR_\cX[1/(f\circ g)]$-modules.

\begin{proposition}
Let $\wt\cM$ be a good $\cR_\cX[1/(f\circ g)]$-module which is strictly Deligne specializable along $\{f\circ g=0\}$ and such that $g$ is proper on the support of $\wt\cM$. Then, if $\cH^jg_+\psi_{(f\circ g)}^\Del \wt\cM$ are strict for any~$j$, we have
\[
\psi_f^\Del \cH^jg_+\wt\cM\simeq\cH^jg_+\psi_{f\circ g}^\Del \wt\cM.
\]
A similar result holds for objects of $\wtRTriples(X)$.
\end{proposition}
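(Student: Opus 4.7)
The plan is to reduce the Deligne version of the statement to the ordinary compatibility of nearby cycles $\psi_f^\beta$ with proper direct image (the result of \cite[Th\ptbl3.3.15]{Bibi01c} recalled just above the proposition), by treating each summand in the definition of $\psi_f^\Del$ separately. Since $\cH^jg_+$ and $\psi^\beta$ commute with finite direct sums, it suffices to prove, for each $t$-irreducible $\varphi\in\tqm\CC[\tqm]$ and each $\beta$ with $\reel\beta\in(-1,0]$, the isomorphism
\[
\psi_f^\beta\bigl(f^+\rho_{q,+}\cE^{-\varphi/\hb}\otimes\cH^jg_+\wt\cM\bigr)\simeq\cH^jg_+\psi_{f\circ g}^\beta\bigl((f\circ g)^+\rho_{q,+}\cE^{-\varphi/\hb}\otimes\wt\cM\bigr).
\]
(The finiteness of the sum on compact sets is the content of Remark \ref{rem:finitude}; if one wishes to bypass the conjectural aspect there, one works locally on $Y$ and keeps only the finitely many $\varphi$'s that contribute.)

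The first key step is a projection formula for the exponential twist. Set $\cE_\varphi:=\rho_{q,+}\cE^{-\varphi/\hb}$; this is an $\cO_\cD[\tm]$-locally free module of rank~$q$ equipped with an integrable $\hb$-connection. Tensor product over $\cO$ with an $\cO$-locally free module carrying an integrable $\hb$-connection is an exact functor that commutes with proper $\cR$-module direct image (this is the standard projection formula, verified through the relative Spencer/de~Rham computation of $\cH^jg_+$), and $g^+f^+\cE_\varphi=(f\circ g)^+\cE_\varphi$. Hence
\[
f^+\cE_\varphi\otimes\cH^jg_+\wt\cM\isom\cH^jg_+\bigl((f\circ g)^+\cE_\varphi\otimes\wt\cM\bigr),
\]
and the right-hand side is, by the strict Deligne specializability hypothesis on $\wt\cM$, a good $\cR_\cX[1/(f\circ g)]$-module strictly specializable along $\{f\circ g=0\}$ on which $g$ is proper.

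The second step is to invoke \cite[Th\ptbl3.3.15]{Bibi01c}. The strictness assumption, that $\cH^jg_+\psi_{f\circ g}^\Del\wt\cM$ is strict for all $j$, implies by the direct-sum decomposition (and the fact that a direct summand of a strict module is strict) that each $\cH^jg_+\psi_{f\circ g}^\beta((f\circ g)^+\cE_\varphi\otimes\wt\cM)$ is strict. The cited theorem then produces the isomorphism
\[
\psi_f^\beta\cH^jg_+\bigl((f\circ g)^+\cE_\varphi\otimes\wt\cM\bigr)\simeq\cH^jg_+\psi_{f\circ g}^\beta\bigl((f\circ g)^+\cE_\varphi\otimes\wt\cM\bigr),
\]
which, combined with the projection formula of the previous paragraph, gives the desired isomorphism for each $(\varphi,\beta)$. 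Summing over all $t$-irreducible $\varphi$ and over $\reel\beta\in(-1,0]$ yields the stated isomorphism.

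For objects $\wt\cT=(\wt\cM',\wt\cM'',\wt C)$ of $\wtRTriples(X)$, the same argument is applied to each of $\wt\cM'$ and $\wt\cM''$; the compatibility for the sesquilinear pairing follows from that of $\psi^\beta\wt C$ with proper direct image stated in \cite[\S3.6.c]{Bibi01c}, combined with the observation that the twist of the pairing by the multiplier $f^*\wt c_{-\varphi}$ of \eqref{eq:wtcprime} pulls back along $g$ in the obvious way (it is a $C^\infty$ function with moderate growth along $\{f=0\}$). The main obstacle is really the $\cR$-module projection formula for the exponential twist plus the transfer of the strictness hypothesis to each $(\varphi,\beta)$-summand; once those are established, the proposition reduces cleanly to the existing result for ordinary nearby cycles.
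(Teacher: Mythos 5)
Your argument is exactly the one the paper sketches: apply the projection formula to reduce to the twisted modules $(f\circ g)^+\rho_{q,+}\cE^{-\varphi/\hb}\otimes\wt\cM$ and then invoke the compatibility of $\psi_f^\beta$ with proper direct image from \cite[Th\ptbl3.3.15]{Bibi01c}, with the strictness hypothesis transferring to each $(\varphi,\beta)$-summand. The paper leaves this as a two-line sketch, and your write-up correctly fills in the same steps.
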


\begin{proof}[Sketch of proof]
We apply the result recalled above to the module $(f\circ\nobreak g)^+\rho_{q,+}\cE^{-\varphi/\hb}\otimes\nobreak\wt\cM$ and we use a projection formula.
\end{proof}

\subsection{Integrability}
We now consider the behaviour of integrability under Deligne specialization. The integrability property is defined in \cite[Chap\ptbl7]{Bibi01c}, where it is proved that it is compatible with specialization (\cf Proposition~7.3.1 in \loccit). To prove the compatibility with Deligne specialization it is therefore enough to prove that $f^+\rho_{q,+}\cE^{-\varphi/\hb}\otimes\wt\cM$ (\resp $f^+\rho_{q,+}\cE^{-\varphi/\hb}\otimes\nobreak\wt\cT$) remains integrable when $\cM$ (\resp $\cT$) is~so.

Firstly, integrability is preserved by direct image (\cf \cite[Prop\ptbl7.1.4]{Bibi01c}), so we can replace $\cM$ with $i_{f,+}\cM$ and assume that $f$ is the coordinate~$t$. Moreover, using the projection formula, we are reduced to showing that $\cE^{-\varphi/\hb}\otimes\rho_q^+\wt\cM$ is integrable.

Let us note that $\cE^{-\varphi/\hb}$ is integrable, either as a $\cR_{\cD_q}[\tqm]$-module or as an object of $\wtRTriples(D_q)$. Indeed, one defines the action of $\hb^2\partial_\hb$ on the generator $\ephi$ of $\cE^{-\varphi/\hb}$ as $\hb^2\partial_\hb(\ephi)=\varphi$. On the other hand, with this definition and using \eqref{eq:wtcprime}, one has
\begin{multline*}
\hb\partial_\hb\wt c_{q,-\varphi}(\ephi,\ovv \ephi)\\
=\wt c_{q,-\varphi}(\hb\partial_\hb(\ephi),\ovv \ephi)-\wt c_{q,-\varphi}(\ephi,\ovv{\hb\partial_\hb(\ephi)}),
\end{multline*}
which gives the desired integrability (\cf \cite[(7.1.2)]{Bibi01c}) of $\cE^{-\varphi/\hb}$ and then of $\cE^{-\varphi/\hb}\otimes\rho_q^+\wt\cM$.\qed

\section{Polarizable wild twistor $\cD$-modules}

\subsection{Wild and regular (polarizable) twistor $\cD$-modules}
Let $X$ be a complex manifold and let $w\in\ZZ$. We will define by induction on $d\in\NN$ the category $\MTw_{\leq d}(X,w)$ of \emph{wild} \emph{twistor $\cD$-modules of weight $w$ on~$X$, having support of dimension $\leq d$}. This will be a full subcategory of the category $\RTriples(X)$.

\begin{definition}[Wild twistor $\cD$-modules]\label{def:wildtw}
The category $\MTw_{\leq d}(X,w)$ is the full subcategory of $\RTriples(X)$, the objects of which are triples $\cT=(\cM',\cM'',C)$ satisfying:

\smallskip\noindent
(HSD) $\cT$ is holonomic, strictly S-decomposable and has support of dimension $\leq d$.

\smallskip\noindent
($\MTw_{>0}$) For any open set $U\subset X$ and any holomorphic function $f:U\to\nobreak\CC$, $\cT$~is strictly Deligne specializable along $\{f=0\}$ and, for any integer $\ell\geq0$, the triple
$$
\gr_\ell^{\rM}\Psi_f^\Del \cT
$$
is an object of $\MTw_{\leq d-1}(X,w+\ell)$.

\smallskip\noindent
$(\MT_0)$
For any zero-dimensional strict component $\{x_o\}$ of $\cM'$ or $\cM''$, we have
\[
(\cM'_{\{x_o\}},\cM''_{\{x_o\}},C_{\{x_o\}})=i_{\{x_o\}+}(\cH',\cH'',C_o)
\]
where $(\cH',\cH'',C_o)$ is a twistor structure of dimension~$0$ and weight~$w$.
\end{definition}

One can define the notion of polarization and the category $\MTw_{\leq d}(X,w)^\rp$ of polarized objects as in \cite[Def\ptbl4.2.1]{Bibi01c}:

\begin{definition}[Polarization]\label{def:polarization}
A \emph{polarization} of an object $\cT$ of $\MTw_{\leq d}(X,w)$ is a ses\-qui\-li\-near Hermitian duality $\cS:\cT\to\cT^*(-w)$ of weight~$w$ such that:

\smallskip\noindent
$(\MTPw_{>0})$
for any open set $U\subset X$ and any holomorphic function $f:U\to\CC$, for any $\alpha$ with $\reel(\alpha)\in[-1,0)$ and any integer $\ell\geq 0$, the morphism $(P\gr_\bbullet^{\rM}\Psi^\Del_{f,\alpha}\cS)_\ell$ induces a polarization of $P_\ell\Psi^\Del_{f,\alpha}\cT$,

\smallskip\noindent
$(\MTP_0)$
for any zero-dimensional strict component $\{x_o\}$ of $\cM'$ or $\cM''$, we have $\cS=i_{\{x_o\}+}\cS_o$, where $\cS_o$ is a polarization of the zero-dimensional twistor structure $(\cH',\cH'',C_o)$.
\end{definition}

\begin{remarque}
According to Proposition \ref{prop:strictspereg}, the category $\MTr_{\leq d}(X,w)$ (\resp $\MTr_{\leq d}(X,w)^\rp$) of \emph{regular (\resp polarized) twistor $\cD$-modules} introduced in \cite[Def\ptbl4.1.2]{Bibi01c} (\resp in \cite[Def\ptbl4.2.1]{Bibi01c}) is a full sub-category of $\MTw_{\leq d}(X,w)$ (\resp of $\MTw_{\leq d}(X,w)^\rp$).
\end{remarque}

\subsection{Some properties of wild twistor $\cD$-modules}\label{subsec:someprop}
In this paragraph, we list some of the properties of the categories $\MT(X,w)$ or $\MT(X,w)^\rp$, as proved in \cite[\S\S4.1 and 4.2]{Bibi01c}, which are also shared by the categories $\MTw(X,w)$ or $\MTw(X,w)^\rp$. The extension is essentially straightforward.

\begin{enumerate}
\item
The category $\MTw_{\leq d}(X,w)$ is a full subcategory of the category $\MT_{\leq d}(X,w)$ defined in \cite[Def\ptbl4.1.1]{Bibi01c}.
\item
The category $\MTw_{\leq d}(X,w)$ is local (\ie checking that an object belongs to this category can be done locally on~$X$). Moreover, it is stable by direct summand in $\RTriples(X)$. Each object $\cT$ in $\MTw_{\leq d}(X,w)$ has a decomposition by the strict support.

\item
\emph{Kashiwara's equivalence}. If $i:X\hto X'$ denoted the inclusion of a closed analytic submanifold of $X'$, then $i_+$ is an equivalence between $\MTw(X,w)$ and $\MTw_X(X',w)$ (objects of $\MTw(X',w)$ supported in $X$). This follows from Lemma \ref{lem:kashequivT}.

\item
If $\cT$ in $\MTw(X,w)$ has strict support $Z$, then there exists a Zariski open dense smooth set $Z'$ of $Z$ on which $\cT$ is $i_+\cT'$, where $\cT'$ is a smooth twistor structure of weight $w$ on~$Z'$.

\item
There is no nonzero morphism in $\RTriples(X)$ from an object of $\MTw(X,w)$ to an object of $\MTw(X,w')$ if $w>w'$.

\item
The category $\MTw(X,w)$ is abelian, all morphisms are strict and strictly Deligne specializable.

\item
If $\cT_1$ is a sub-object in the category $\MTw(X,w)$ of polarizable object $\cT$ in $\MTw(X,w)^\rp$ with polarization~$\cS$, then the polarization induces a polarization~$\cS_1$ on~$\cT_1$ and $(\cT_1,\cS_1)$ is a direct summand of $(\cT,\cS)$ in $\MTw(X,w)^\rp$.

\item
The category $\MTw(X,w)^\rp$ is semi-simple.

\item
The conclusion of \cite[Prop\ptbl2.1.19 and 2.1.21]{Bibi01c} hold for graded Lefschetz polarized wild twistor $\cD$-modules.

\item
The spectral sequence degeneration argument of \cite[Cor\ptbl4.2.11]{Bibi01c} holds in the wild case.
\end{enumerate}

\section{Local properties of $\cR_\cX$-modules in dimension one}

In this section, we analyze local properties of $\cR_\cX$-modules which are strictly specializable, when $X$ has dimension one. Therefore, we will assume that $X$ is a disc with coordinate $t$. We set $X^*=X\moins\{0\}$. We denote by $K$ the field of convergent Laurent series $\CC\{t\}[\tm]$ and by $\wh K$ the field of formal Laurent series $\CC\lcr t\rcr[\tm]$. For $q\in\NN^*$, we denote by $K_q$ (\resp $\wh K_q$) the extension of $K$ (\resp $\wh K$) obtained by taking a $q$-th root $t_q$ of $t$.

\subsection{Meromorphic bundles with connection}\label{subsec:meromconn}

We recall here some classical results on meromorphic connections (\cf \eg \cite{Malgrange91}). Let $\wt M$ be a finite dimensional $K$-vector space with a connection~$\nabla$. There exists $q\in\NN^*$ such that the pull-back $\wt M_q=K_q\otimes_K\wt M$ has a \emph{formal} decomposition
\begin{equation}\label{eq:decomp}
\wt M_q^\wedge\defin\wh K_q\otimes_{K_q}\wt M_q=\tbigoplus_{j\in J}(\ccE^{\varphi_j}\otimes\wh R_j),
\end{equation}
where $J$ is a finite set, the $\varphi_j$ are pairwise distinct elements of $\tqm\CC[\tqm]$, and the $R_j$ are meromorphic bundles with a regular connection. A coarser decomposition descends to $\wh {\wt M}=\wh K\otimes_K\wt M$ to give a decomposition $\wt M^\wedge=\wt M^\wedge{}^\rr\oplus\wt M^\wedge{}^\ir$.

\subsection{Meromorphic Higgs bundles}\label{subsec:higgslocalone}

Before analyzing wild twistor $\cD$-modules in dimension one, we need to adapt the well-known results on the formal structure of meromorphic connections in dimension one to Higgs objects.

Let $(\wt M,\theta)$ be a finite dimensional $K$-vector space equipped with an endomorphism~$\theta$. To keep the analogy with connections, we also regard~$\theta$ as a Higgs field, \ie a~morphism $\wt M\to\wt M\otimes dt$.

By a lattice $E$ of $\wt M$, we mean $\CC\{t\}$-submodule of finite type of $\wt M$ such that $\wt M=E[\tm]$. It is equivalent to saying that $E$ is free of finite rank over $\CC\{t\}$ and that $\wt M=E[\tm]$. By a $V_0$-lattice, we mean a $\CC\{t\}[t\theta]$-submodule $U$ of $\wt M$ which is of finite type and such that $U[\tm]=\wt M$.

\begin{lemme}\label{lem:V0}
If $U$ is any $V_0$-lattice, then $U/tU$ is a finite dimensional $\CC$-vector space with an endomorphism induced by $t\theta$.
\end{lemme}

\begin{proof}
Let us fix a $K$-basis of $\wt M$ and let $A$ be the matrix of $t\theta$ in this basis. Denote by $\chi_A$ its characteristic polynomial. Then $\chi_A(t\theta)=0$. This can be written as $b(t\theta)=tQ(t,t\theta)$, for some nonzero polynomial~$b$. If $U$ is any $V_0$-lattice, then $U/tU$ is a $\CC[t\theta]$-module of finite type, on which $b(t\theta)$ vanishes. Hence it is finite dimensional.
\end{proof}

We say that $(\wt M,\theta)$ is \emph{regular} if there exists a $K$-basis of $\wt M$ in which the matrix of $\theta$ has at most a simple pole; in other words, if there exists a lattice $E$ of $\wt M$ on which $\theta$ has a simple pole at $0$. We then say that $(E,\theta)$ is a \emph{logarithmic Higgs bundle}. Clearly, this is equivalent to saying that any $V_0$-lattice is a lattice.

Giving $(\wt M,\theta)$ is equivalent to giving a square matrix of size $d$ with entries in $K$, up to linear equivalence, \ie up to conjugation by an element of $\GL_d(K)$.

The tensor product and the direct sum are well-defined and preserve regular objects (the tensor product of two Higgs fields $\theta$ and $\theta'$ is $(\theta\otimes\nobreak\Id)\oplus(\Id\otimes\theta')$).

\subsubsection{Classification in rank one}
Giving a meromorphic Higgs bundle of rank~one is equivalent to giving a meromorphic differential form $\omega=a(t)dt$ (the equivalence by $\GL_1$ is reduced to identity). We will write $a(t)=\partial_t(\varphi)+a_{-1}/t$ for some meromorphic function $\varphi(t)=t^ku(t)$, where $k\in\ZZ$ and $u$ is a unit. Notice that, if $a_{-1}=0$, the meromorphic Higgs bundle is the restriction to $\hb=0$ of the $\cR_\cX[\tm]$-module $\cE^{\varphi/\hb}$ introduced in \S\ref{subsec:exptwist}.

\subsubsection{Classification in arbitrary rank}
$\wt M_q=K_q\otimes_K\wt M$, equipped with the pull-back $\theta_q$ of $\theta$ as a differential form. If $A(t)$ is the matrix of $t\theta$ in some $K$-basis of $\wt M$, then $A_q(t_q)\defin qA(t_q^{q})$ is that of $t_q\theta_q$ in the corresponding $K_q$-basis of $\wt M_q$. For a suitable $q$, the eigenvalues of $A_q(t_q)$ exist in $K_q$, and $A_q(t_q)$ can be reduced to the Jordan normal form, so $(\wt M_q,t_q\theta_q)$ decomposes as a finite direct sum $\oplus_f(\cE^{\varphi/\hb}_{|\hb=0}\otimes \wt N_\varphi)$, where the matrix of $t_q\theta_q$ in some basis of $\wt N_\varphi$ is a constant matrix and $\varphi$ varies in a finite set of distinct meromorphic (but possibly holomorphic) functions. In particular, $(\wt M_q,t_q\theta_q)$ is an extension of rank-one objects. One can also consider, in a way analogous to that of meromorphic connections, a coarser (unique) decomposition
\begin{equation}\label{eq:decHiggs}
(\wt M_q,t_q\theta_q)\simeq\tbigoplus_\varphi(\cE^{\varphi/\hb}_{|\hb=0}\otimes \wt R_\varphi),
\end{equation}
where $\varphi$ varies in a finite set in $\tqm\CC[\tqm]$ and $\wt R_\varphi$ is regular.

Going back to $(\wt M,\theta)$, we find a unique decomposition
\begin{equation}\label{eq:regirreg}
(\wt M,\theta)=(\wt M,\theta)^\rr\oplus(\wt M,\theta)^\ir,
\end{equation}
coming from the decomposition of $(\wt M_q,\theta_q)$ into regular terms ($\varphi=0$), and purely irregular terms ($\varphi\neq0$). In fact, $(\wt M,\theta)^\rr$ is regular.

\begin{lemme}\label{lem:higgsreg}
Let $(\wt M,\theta)$ be a germ of meromorphic Higgs bundle, and let $U$ be any $V_0$-lattice. Then $\dim_\CC U/tU=\dim_K\wt M^\rr$.
\end{lemme}

\begin{proof}
Given two $V_0$-lattices $U,U'$ of $\wt M$, we consider the filtrations $U^\cbbullet=t^\cbbullet U$ and $U^{\prime\cbbullet}=t^\cbbullet U'$ of $\wt M$. Then, denoting by $U^{\prime\cbbullet}\gr^k_U\wt M$ the filtration induced by $U^{\prime\cbbullet}$ on $\gr^k_U\wt M$, we have
\[
\gr^p_{U'}\gr^0_U\wt M\xrightarrow[\ts\sim]{~\ts t^{-p}}\gr^0_{U'}\gr^{-p}_U\wt M\simeq\gr^{-p}_U\gr^0_{U'}\wt M
\]
and
\begin{align*}
\dim_\CC U/tU=\dim_\CC\gr^0_U\wt M&=\sum_p\dim_\CC\gr^p_{U'}\gr^0_U\wt M\\
&=\sum_p\dim_\CC\gr^{-p}_U\gr^0_{U'}\wt M=\dim_\CC U'/tU'.
\end{align*}
Therefore, to prove the lemma, we can take a $V_0$-lattice which decomposes according to the decomposition \eqref{eq:regirreg}.

Assume that $\wt M$ is purely irregular. We want to show that $\wt M$ has finite type as a $\CC\{t\}[t\theta]$-module. It is enough to show this after ramification, and we can reduce to the rank-one situation corresponding to a form $\omega$ with a pole of order $\geq2$. Then the polynomial $b(t\theta)$ introduced in the proof of Lemma \ref{lem:V0} is constant, so $U=tU$, and therefore $U=U[\tm]=\wt M$.

Assume now that $\wt M$ is regular. Then $U$ is a lattice, and the result is clear.
\end{proof}

\subsection{General properties of strict holonomic $\cR_\cX$-modules}

\begin{lemme}\label{lem:holstrict}
Let $\cM$ be a holonomic $\cR_\cX$-module. Then $\wt\cM\defin\cM[\tm]$ is coherent over $\cO_\cX[\tm]$. If moreover $\cM$ is strict, then, for any $\hbo\in\Omega_0$,
\begin{enumerate}
\item\label{lem:holstrict1}
$\wt\cM_\ohbo$ is free of finite rank over $\cO_{\cX,\ohbo}[\tm]$,
\item\label{lem:holstrict2}
$i^*_{\hbo}\wt\cM=\wt M_{\hbo}\defin M_{\hbo}[\tm]$,
\end{enumerate}
\end{lemme}

\begin{proof}
As $\cM$ is holonomic, its characteristic variety is contained in $(T^*_XX\cup T^*_0X)\times\Omega_0$ if $X$ is small enough. Locally near $\ohbo$, there exists thus an operator with principal symbol $t^j\partiall_t^k$ (for some $j,k$) which annihilates $\cM$. This gives the first assertion. Assume now that $\cM$ is strict.

\begin{enumerate}
\item
Away from $\{t=0\}$ (assuming $X$ small enough), we know by \cite[Prop\ptbl1.2.8]{Bibi01c} that $\cM$ is $\cO_{\cX^*}$-coherent and that it is locally free on $X^*\times\Omega_0^*$; moreover, it has no $\cO_{\cX^*}$-torsion, because any torsion section would be killed by some power of~$\hb$, in contradiction with strictness. As a consequence, $\wt\cM$ has no $\cO_\cX$-torsion.

Let us work locally near $\ohbo$. Let $\cN$ be a coherent $\cO_\cX$\nobreakdash-submodule of $\wt\cM$ such that $\wt\cM=\cN[\tm]$. As~$\cN$ has no $\cO_\cX$\nobreakdash-torsion, it locally free away from $\{t=0\}$ (if $X$ is small enough), and the kernel and the cokernel of the natural morphism $\cN\to\cN^{\vee\vee}$ (where $\cN^\vee=\cHom_{\cO_\cX}(\cN,\cO_\cX)$) are supported on $t=\nobreak0$. Let us also set ${\wt\cM}^\vee=\cHom_{\cO_\cX[\tm]}(\wt\cM,\cO_\cX[\tm])$. Then ${\wt\cM}^\vee=\cN^\vee[\tm]$ and therefore the natural morphism $\wt\cM\to\wt\cM^{\vee\vee}$ is an isomorphism. Consequently, $\cN^{\vee\vee}$ is (isomorphic to) a submodule $\cN'$ of $\wt\cM$. As $\cN^{\vee\vee}$ is reflexive, it is locally free (because $\cO_{\cX,\ohbo}$ is a regular local ring of dimension~$2$). We have hence found a locally free $\cO_\cX$-module $\cN'$ of $\wt\cM$ such that $\wt\cM=\cN'[\tm]$.

\item
As $\cM$ is strict, the sequence
\[
0\to\cM\To{(\hb-\hbo)}\cM\to i^*_{\hbo}\cM\to0
\]
is exact. It remains exact after tensoring with $\cO_\cX[\tm]$.\qedhere
\end{enumerate}
\end{proof}

In dimension one, we will mainly work with the localized modules $\wt\cM$. It is therefore useful to summarize the correspondence $\wt\cM\mto\wt\cM_{\min_t}$.

\begin{corollaire}
Let $\cM$ be a strict holonomic $\cR_\cX$-module which is strictly specializable. Then $\wt\cM$ is a locally free $\cO_\cX[\tm]$-module with a $\hb$-connection (\ie a compatible action of $\cR_\cX$) which is strictly specializable (\cf\S\ref{subsec:strictspe}). Conversely, starting from such a $\wt\cM$, the minimal extension $\wt\cM_{\min_t}$ across $\{t=0\}$ is strict holonomic and strictly S-decomposable (with only one strict component).\qed
\end{corollaire}

\begin{remarque}[Formal coefficients]
Let us denote by $\cO_{\wh\cX}$ the sheaf $\varprojlim_n\cO_\cX/t^n\cO_\cX$, and by $\cR_{\wh\cX}$ the sheaf $\cO_{\wh\cX}\otimes_{\cO_\cX}\cR_\cX$. These sheaves are supported on $\{t=0\}$. Then the previous results extend to holonomic $\cR_{\wh\cX}$-modules in a natural way.
\end{remarque}

\subsection{The regular case}

\begin{proposition}\label{prop:reg}
Let $\wt\cM$ be a strictly specializable $\cR_\cX[\tm]$-module which is $\cO_\cX[\tm]$-locally free of finite rank. The following properties are equivalent:
\begin{enumerate}
\item\label{prop:reg1}
there exists $\hbo\in\Omega_0$ such that $\wt M_{\hbo}$ is regular (as a meromorphic connection if $\hbo\neq0$, and as a meromorphic Higgs bundle if $\hbo=0$),
\item\label{prop:reg2}
for any $\hbo\in\Omega_0$, $\wt M_{\hbo}$ is regular,
\item\label{prop:reg3}
for some (or any) $\hbo\in\Omega_0$, any coherent $V_0\cR_{\cX,\ohbo}$-submodule of $\wt\cM_\ohbo$ generating $\wt\cM_\ohbo$ over $\cO_{\cX,\ohbo}[\tm]$ has finite type over $\cO_{\cX,\ohbo}$.
\item\label{prop:reg4}
$\wt\cM$ is also strictly specializable with ramification and exponential twist and, for any ramification $\rho_q:t_q\mto t=t^q$, any $\varphi\in\tqm\CC[\tqm]\moins\{0\}$, denoting $\wt\cM_q=\rho_q^+\cM$, we have $\psi_{t_q}^\beta(\cE^{-\varphi/\hb}\otimes\nobreak\wt\cM_q)=0$ for any $\beta\in\CC$.
\end{enumerate}
\end{proposition}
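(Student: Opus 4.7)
\smallskip\noindent
\emph{Proof proposal.}
The plan is to translate each of the four conditions into a statement about the coherence of $V^0_\ohbo\wt\cM$, the degree-zero piece of the Kashiwara--Malgrange filtration of $\wt\cM$, and then to close the loop $(3)\Rightarrow(2)\Rightarrow(1)\Rightarrow(3)$; the equivalence with~(4) is handled separately via Proposition \ref{prop:strictspereg} and the Levelt--Turrittin theorem together with its Higgs analogue \eqref{eq:decHiggs}.

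First I would record that by strict specializability the filtration $V^\cbbullet_\ohbo\wt\cM$ exists locally near each $\hbo\in\Omega_0$, with $V^0_\ohbo\wt\cM$ coherent over $V_0\cR_{\cX,\ohbo}$ and generating $\wt\cM$ over $\cO_{\cX,\ohbo}[\tm]$. Strictness identifies the fibre $V^0_\ohbo\wt\cM/(\hb-\hbo)V^0_\ohbo\wt\cM$ with a $V_0$-lattice $V^0\wt M_\hbo$ of $\wt M_\hbo$. By the classical Malgrange criterion at $\hbo\neq0$ and by Lemma \ref{lem:higgsreg} at $\hbo=0$, regularity of $\wt M_\hbo$ is equivalent to $V^0\wt M_\hbo$ being $\CC\{t\}$-coherent. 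Thus (1) and (2) are pointwise coherence statements on $V^0$, whereas (3), noting that $V^0_\ohbo\wt\cM$ is itself a coherent $V_0\cR$-submodule generating $\wt\cM$, amounts to the uniform coherence of $V^0_\ohbo\wt\cM$ over $\cO_{\cX,\ohbo}$ for some (equivalently any) $\hbo$.

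Granting this reformulation, $(3)\Rightarrow(2)$ is fibre restriction and $(2)\Rightarrow(1)$ is trivial. The main content is $(1)\Rightarrow(3)$. At a stalk $(0,\hbo)$ with $\wt M_\hbo$ regular, pick $V_0\cR$-generators $m_1,\dots,m_r$ of $V^0_\ohbo\wt\cM$ and use the Bernstein relation $b(t\partiall_t)V^0_\ohbo\wt\cM\subset tV^0_\ohbo\wt\cM$ to conclude that the $\cO$-submodule $E$ spanned by $(t\partiall_t)^j m_i$ for $0\leq j<\deg b$ satisfies $V^0_\ohbo\wt\cM=E+tV^0_\ohbo\wt\cM$. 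Reducing modulo $\hb-\hbo$ and using the $\CC\{t\}$-coherence of $V^0\wt M_\hbo$ furnished by (1), Nakayama over $\CC\{t\}$ forces the classes of these elements to span $V^0\wt M_\hbo$, which rephrases as $V^0_\ohbo\wt\cM=E+(\hb-\hbo)V^0_\ohbo\wt\cM$. Iterating in $\hb-\hbo$ yields $V^0_\ohbo\wt\cM\subset E+(\hb-\hbo)^nV^0_\ohbo\wt\cM$ for every $n\geq1$. The equality $V^0_\ohbo\wt\cM=E$ should then be extracted by a Krull intersection argument in the Noetherian local ring $\cO_{\cX,(0,\hbo)}$, applied inside the $\cO_\cX[\tm]$-free module $\wt\cM$ where strictness guarantees $\bigcap_n(\hb-\hbo)^n\wt\cM=0$. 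Carrying out this last separation step without assuming a priori that $V^0_\ohbo\wt\cM$ is already $\cO$-coherent is where I expect the main technical obstacle to lie.

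For the equivalence with (4), direction $(3)\Rightarrow(4)$ follows from Proposition \ref{prop:strictspereg} combined with Proposition \ref{prop:strictsperamif}, once $\cO$-coherence of $V^0_\ohbo\wt\cM$ is identified with the regularity condition of \cite[\S3.1.d]{Bibi01c}. Conversely, for $(4)\Rightarrow(1)$, fix $\hbo\neq0$: any irregular summand $\ccE^{\varphi_j}\otimes\wh R_j$ in the Levelt--Turrittin decomposition \eqref{eq:decomp} of $\wt M_\hbo$ would, via Remark \ref{rem:ramif} applied to $\cE^{-\varphi_j/\hb}\otimes\wt\cM_q$, produce a nonzero fibre at $\hbo$ of $\psi_{t_q}^\beta(\cE^{-\varphi_j/\hb}\otimes\wt\cM_q)$, contradicting~(4); at $\hbo=0$ the parallel argument uses the Higgs decomposition \eqref{eq:decHiggs} and Lemma \ref{lem:higgsreg} to detect nonvanishing at the Higgs level after tensoring with $\cE^{-\varphi_j/\hb}|_{\hb=0}$. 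Hence $\wt M_\hbo$ is regular at every $\hbo$, yielding (1).
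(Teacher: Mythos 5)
Your proposal attacks the key implication $(1)\Rightarrow(3)$ by a genuinely different route than the paper, and you have put your finger on exactly where it is incomplete. The paper argues via cyclic submodules: for $\wt\cN\subset\wt\cM$ generated over $\cR_\cX[\tm]$ by a single section $m$, the companion family $(m,t\partiall_t m,\dots,(t\partiall_t)^\delta m)$ is a basis of $\wt\cN_\ohbo$ after inverting the equation $f$ of an \emph{apparent singularity}; reduction mod $\hb-\hbo$ kills the $(\hb-\hbo)$-power and the regularity assumption at $\hbo$ puts the $a_j$ in $\cO_{\cX,\ohbo}$. The apparent singularity is then eliminated by Malgrange's factorization $S=S_1S_2$ with $S_1\in\GL(\cO_{\cX,\ohbo}[\tm])$ and $S_2\in\GL(\cO_{\cX,\ohbo}[f^{-1}])$, which produces a basis $\mug$ in which $t\partiall_t$ has holomorphic entries; $\cO$-coherence of $V_0\cR$-submodules then follows by a standard finiteness argument. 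No completion or separation argument is needed. You instead work directly with the full $V$-filtration and a Bernstein polynomial (with coefficients in $\cO_{\Omega_0,\hbo}$, since the roots $\beta\star\hb$ vary with $\hb$), reduce mod $\hb-\hbo$, and use Nakayama over $\CC\{t\}$ to get $V^0_\ohbo\wt\cM\subset E+(\hb-\hbo)^nV^0_\ohbo\wt\cM$ for all $n$.

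The separation step you flag is a real gap, not a routine verification. Knowing $\bigcap_n(\hb-\hbo)^n\wt\cM_\ohbo=0$ only says the $(\hb-\hbo)$-adic topology on $\wt\cM_\ohbo$ is separated; to conclude $x\in E$ from $x\in E+(\hb-\hbo)^nV^0_\ohbo\wt\cM$ for all $n$ you would like to apply Krull to $V^0_\ohbo\wt\cM/E$, but this quotient is not yet known to be $\cO_{\cX,\ohbo}$-finite (that is precisely what is to be proved), and $V_0\cR_{\cX,\ohbo}$ is non-commutative so Nakayama there is unavailable. The argument can be salvaged, but it requires an auxiliary lattice: since $E$ is $\cO_{\cX,\ohbo}$-finite inside the free $\cO_{\cX,\ohbo}[\tm]$-module $\wt\cM_\ohbo$, choose a free $\cO_{\cX,\ohbo}$-lattice $L\subset\wt\cM_\ohbo$ containing $E$. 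Because $t$ and $\hb-\hbo$ are coprime in the UFD $\cO_{\cX,\ohbo}$, one checks $(\hb-\hbo)^n\wt\cM_\ohbo\cap L=(\hb-\hbo)^nL$, so your inclusion becomes $x\in E+(\hb-\hbo)^nL$ for all $n$, and now Krull applied to the finitely generated $\cO_{\cX,\ohbo}$-module $L/E$ gives $x\in E$. This extra paragraph is what your proposal is missing; the paper's apparent-singularity argument sidesteps the issue entirely.

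Your treatment of $(3)\Rightarrow(4)$ matches the paper's (a constant good $V$-filtration after the twist), and for $(4)\Rightarrow(1)$ the paper records it as easy; your detection mechanism through the restriction to $\hb=\hbo$ is the same one used in the proof of Proposition~\ref{prop:constance} and is fine.
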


We say that $\wt\cM$ is \emph{regular} when one of these equivalent properties is satisfied.

\begin{remarques}\label{rem:reg}\mbox{}
\begin{enumerate}
\item\label{rem:reg1}
Property \ref{prop:reg}\eqref{prop:reg3} is taken as the definition of regularity in \cite[\S3.1.d]{Bibi01c}. Notice that, when applied to the canonical $V$-filtration $V_\phbo^\cbbullet\wt\cM$, it gives its $\cO_{\cX,\ohbo}$-freeness, as each $\gr^b_{V_\phbo}\wt\cM$ is $\cO_{\Omega_0,\hbo}$-free of finite rank, and the sum of the ranks, for $b\in(-1,0]$, is equal to the $\cO_\cX[\tm]$-rank of $\wt\cM$.

\item\label{rem:reg2}(Formal coefficients)
Proposition \ref{prop:reg} also applies to modules defined over~$\cR_{\wh\cX}[\tm]$.
\end{enumerate}
\end{remarques}

\begin{proof}
\eqref{prop:reg2}$\implique$\eqref{prop:reg1} is clear. For the converse, let us note that the assumption of strict specializability implies in particular that, for any~$\hbo$, if $V_\phbo^\cbbullet\wt\cM$ denotes the Kashiwara-Malgrange filtration of $\wt\cM$, then $V_\phbo^0\wt\cM/V_\phbo^1\wt\cM$ is a locally free $\cO_{\Omega_0,\hbo}$-module, whose rank does not depend on~$\hbo$. Under Assumption \eqref{prop:reg1}, its fibre at a given $\hbo$ has dimension equal to the rank of $\wt\cM$ (this is classical if $\hbo\neq0$, and follows from Lemma~\ref{lem:higgsreg} if $\hbo=0$), hence the same property holds for any $\hbo$. This, in turn, implies regularity of $\wt M_\hbo$ for any $\hbo$.

\eqref{prop:reg3}$\implique$\eqref{prop:reg1} is proved in the same way and \eqref{prop:reg4}$\implique$\eqref{prop:reg1} is easy.

\subsubsection*{\proofname\ of \eqref{prop:reg1}$\implique$\eqref{prop:reg3}}
Let $m\in\wt\cM_\ohbo$ and let $\wt\cN_\ohbo$ be the $\cR_\cX[\tm]$-submodule generated by $m$. It is enough to prove the property for any such $\wt\cN$. By Lemma \ref{lem:holstrict}\eqref{lem:holstrict1}, we know that $\wt\cN_\ohbo$ is $\cO_\cX[\tm]$-free, and we denote by $\delta+1$ its rank.

The family $\bmm\defin(m,\dots,(t\partiall_t)^\delta m)$ defines a $\cO_\cX$-linear morphism $(\cO_\cX[\tm])^{\delta+1}\to \wt\cN$ (in some neighbourhood of $\ohbo$). As the rank of $\wt\cN$ is $\delta+1$ and as $\wt\cN$ is generated by $m$ over $\cR_\cX[\tm]$, this morphism is injective. Let us denote by $\cC$ its cokernel. This is a coherent $\cO_\cX[\tm]$-module supported on some curve in $(X\times\Omega_0,\ohbo)$ not included in $\{t=0\}$, which is called the \emph{apparent singularity}. There exists thus $k\in\NN$ and $f(t,\hb)\in\cO_{\cX,\ohbo}$ not divisible by $t$ such that
\[
(\hb-\hbo)^kf(t,\hb)(t\partiall_t)^{\delta+1} m=\sum_{j=0}^\delta a_j(t,\hb)(t\partiall_t)^jm,\quad a_j(t,\hb)\in\cO_{\cX,\ohbo}[\tm]
\]
where $f(t,\hb)=0$ is an (possibly non reduced) equation of the components of the curve distinct from $\{\hb=\hbo\}$, and we assume that $k$ and $f$ are chosen so that none of the irreducible components of $(\hb-\hbo)^kf(t,\hb)$ divides all the $a_j$.

Reducing ${}\bmod(\hb-\hbo)$ implies that $k=0$, otherwise there would exist a non-trivial relation with coefficients in $\cO_{X,0}[\tm]$ between the classes of the elements of the family, and the rank of the submodule it generates would be $<\delta+1$, in contradiction with the freeness of $\wt\cN_\ohbo$.

As we assume that the restriction of $\wt\cN$ to any fixed $\hb$ has a regular singularity at $t=0$, the $a_j$ also belong to $\cO_{\cX,\ohbo}$ and thus the matrix of $t\partiall_t$ in the family $\bmm$ has entries in $\cO_\cX[1/f]$.

The next step will consist in eliminating the apparent singularity, which is clearly an obstruction for proving the $\cO_{\cX}$-coherence of the $V_0\cR_\cX[\tm]$-submodule generated by $m$.

Let $\bmn$ be any $\cO_{\cX,\ohbo}[\tm]$-basis of $\wt\cN_\ohbo$. We have $\bmm=\bmn\cdot S$ where the matrix $S$ has entries in $\cO_{\cX,\ohbo}[\tm]$ but is invertible only after inverting $f$, \ie $S\in\GL_{\delta+1}(\cO_{\cX,\ohbo}[\tm,f^{-1}])$.

\begin{lemme}[\cite{Malgrange91c}]
For $S$ as above, there exist two matrices
\[
S_1\in\nobreak \GL_{\delta+1}(\cO_{\cX,\ohbo}[\tm])\quad\text{and}\quad S_2\in\GL_{\delta+1}(\cO_{\cX,\ohbo}[f^{-1}])
\]
such that $S=S_1S_2$.\qed
\end{lemme}

Let us thus set $\mug=\bmn\cdot S_1$. Then $\mug$ is another $\cO_{\cX,\ohbo}[\tm]$-basis of $\wt\cN_\ohbo$ and the matrix of $t\partiall_t$ in the basis $\mug$ has pole along $\{t=0\}$ at most, as it is so for any $\cO_{\cX,\ohbo}[\tm]$-basis of $\wt\cN_\ohbo$. On the other hand, as $\mug=\bmm\cdot S_2^{-1}$, the matrix of $t\partiall_t$ in the basis $\mug$ has poles at most along $\{f=0\}$. We conclude that the matrix of $t\partiall_t$ in the basis $\mug$ has entries in $\cO_{\cX,\ohbo}$. It follows that the $V_0\cR_{\cX,\ohbo}[\tm]$-submodule generated by $\mug$ is finite over $\cO_{\cX,\ohbo}$, hence so is any finitely generated $V_0\cR_{\cX,\ohbo}[\tm]$-submodule of $\wt\cN_\ohbo$, by a standard argument.

\subsubsection*{\proofname\ of \eqref{prop:reg3}$\implique$\eqref{prop:reg4}}
Firstly, working with $m$ and $\wt\cN_\ohbo$ as above, we get $t\partiall_t\ephi\otimes\mug=\ephi\otimes\mug\cdot (t\varphi'(t)\id+A(t,\hb))$, where $A$ has entries in $\cO_{\cX,\ohbo}$. This can be rewritten as $\ephi\otimes\mug=\ephi\otimes\mug\cdot tB(t,\hb)+(t\partiall_t\ephi\otimes\mug)\cdot tC(t,\hb)$, where $B,C$ have entries in $\cO_{\cX,\ohbo}$. It follows that the $V_0\cR_{\cX,\ohbo}$-submodule generated by $\ephi\otimes\mug$ is equal to $\cE^{-\varphi/\hb}\otimes\wt\cN_\ohbo$. Therefore, there exists a good $V$-filtration which is constant and equal to $\cE^{-\varphi/\hb}\otimes\wt\cN_\ohbo$. It satisfies all the properties characterizing the Kashiwara-Malgrange filtration, and all graded pieces are zero. It follows easily that $\cE^{-\varphi/\hb}\otimes\wt\cM_\ohbo$ satisfies the same properties.

It remains to checking that strict specializability is preserved by ramification, as regularity will be so, applying for instance Property \eqref{prop:reg1}. This is Proposition \ref{prop:strictsperamif}.
\end{proof}

In the following, we will set $\cX^\circ=X\times(\Delta_0\moins\{0\})$.

\begin{proposition}[Regular {$\cR_{\wh\cX}[\tm]$}-modules on $\hb\neq0$]
\label{prop:structreg}
Let $\wt\cM^\wedge$ be a locally free $\cO_{\wh\cX{}^\circ}[\tm]$-module with a compatible $\cR_{\wh\cX{}^\circ}$-action, which is strictly specializable and regular (at $t=0$). Then there exists a strictly specializable regular $\cR_{\cX^\circ}[\tm]$-module $\wt\cR$ such that $\wt\cM^\wedge$ is isomorphic to $\cO_{\wh\cX{}^\circ}\otimes_{\cO_{\cX^\circ}}\wt\cR$.
\end{proposition}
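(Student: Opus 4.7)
The plan is local over $\Omega_0\moins\{0\}$; fix $\hbo$ there and work in a small neighborhood. The strategy is to realize $\wt\cM^\wedge$ as the formalization of an analytic module by producing, via a Deligne-Malgrange-type normal form depending analytically on~$\hb$, an analytic lattice whose formalization reconstructs the Deligne-type lattice $U^\wedge\defin V^{>-1}_\phbo\wt\cM^\wedge$. By the formal analogue of Proposition~\ref{prop:reg}\eqref{prop:reg3} (\cf Remark~\ref{rem:reg}\eqref{rem:reg2}), $U^\wedge$ is $\cO_{\wh\cX^\circ,\hbo}$-locally free of rank~$d$ (the generic rank of $\wt\cM^\wedge$) and is stable under $t\partiall_t$; each graded piece $\gr^b_V\wt\cM^\wedge$ for $b\in(-1,0]$ is $\cO_{\Omega_0,\hbo}$-locally free with $-(t\partiall_t-\beta\star\hb)$ acting nilpotently (here $b=\ell_\hbo(\beta)$). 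We choose a $\cO_{\wh\cX^\circ,\hbo}$-basis of $U^\wedge$ by lifting adapted bases of the graded pieces; in this basis, the residue $\bar A\defin A|_{t=0}$ of the matrix~$A$ of $t\partiall_t$ is block-diagonal, with blocks of the form $(\beta\star\hb)\id+N_\beta$ indexed by the finite set of spectral indices $\beta$ of~$\wt\cM^\wedge$, where each $N_\beta$ is a $\hb$-analytic nilpotent matrix.

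The core of the proof is to find a formal gauge transformation $P\in\GL_d(\cO_{\wh\cX^\circ,\hbo})$ with $P\equiv\id$ modulo~$t$, such that the transformed matrix $A'\defin P^{-1}AP+P^{-1}(t\partiall_t P)$ has entries in $\cO_{\cX^\circ,\hbo}$ (i.e.\ converges in~$t$). Expanding $P=\id+\sum_{k\geq1}P_kt^k$ and matching coefficients of~$t^k$ in the equation $A'=\bar A$, this becomes a sequence of cohomological equations $[\bar A,P_k]-kP_k=Q_k$, where $Q_k$ depends on $P_1,\dots,P_{k-1}$ and on the Taylor coefficients of~$A$. The linear operator $X\mto[\bar A,X]-kX$ on matrices is invertible precisely when no two eigenvalues of $\bar A$ differ by the integer~$k$.

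\textbf{The main obstacle} is the analysis of resonances. The eigenvalues of $\bar A$ are the scalars $\beta\star\hb$ with $\beta$ in the spectrum; using $\reel\beta\in(-1,0]$, a short calculation shows that a resonance $(\beta_1-\beta_2)\star\hb\in\ZZ\moins\{0\}$ with $\beta_1\neq\beta_2$ can occur only on the real-analytic curve $\reel(\hb^2)=-1$ in $\Omega_0\moins\{0\}$, and only at a discrete subset of it. Away from that set, the cohomological system admits the unique solution giving $A'=\bar A$, which is manifestly analytic in~$t$. Near resonant points, following the classical Malgrange-Sibuya theory of regular singular systems with parameter (\cf \cite{Malgrange91}), the components of $Q_k$ on the resonant subspaces must be kept in $A'$ as polynomial-in-$t$ corrections; since the resonant projectors depend analytically on~$\hb$, so does $A'(t,\hb)$, which is therefore analytic on a full neighborhood of~$\hbo$. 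One then defines $\wt\cR$ locally as $(\cO_{\cX^\circ,\hbo}[\tm])^d$ with $t\partiall_t$ acting via~$A'$; the gauge transformation~$P$ yields the isomorphism $\cO_{\wh\cX^\circ,\hbo}\otimes_{\cO_{\cX^\circ,\hbo}}\wt\cR\isom\wt\cM^\wedge$, strict specializability and regularity of $\wt\cR$ are inherited through this isomorphism, and the local constructions glue to a global $\wt\cR$ on $\cX^\circ$ since analytic descent of a formal regular module is canonical up to unique isomorphism.
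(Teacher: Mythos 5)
Your route is the same as the paper's: produce, locally in $\hb$ near a fixed $\hbo\neq0$, a basis of $\wt\cM^\wedge_\ohbo$ in which the matrix of $t\partiall_t$ has entries in $\cO_{\cX^\circ,\ohbo}$, then glue the resulting analytic lattices using the fact that a formal gauge equivalence between convergent regular-singular $\hb$-connections is automatically convergent once $\hb$ is bounded away from~$0$. The paper's Lemma~\ref{lem:localreg} does exactly this, and its globalization step is the same as your last sentence.

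There is however a concrete computational error in your resonance analysis. Since $\partiall_t=\hb\partial_t$, the gauge contribution at order $t^k$ is $t\partiall_t(t^kP_k)=\hb k\, t^kP_k$, so the operator in the order-$k$ cohomological equation is $X\mapsto[\bar A,X]+\hb kX$, not $X\mapsto[\bar A,X]-kX$. Hence the resonance condition is $(\beta_1-\beta_2)\star\hb\in\hb\ZZ^*$, not $(\beta_1-\beta_2)\star\hb\in\ZZ\moins\{0\}$, and the asserted locus $\reel(\hb^2)=-1$ is not the right one (for instance, when $\beta_1-\beta_2$ is real, resonances occur on the real axis). Fortunately the property your argument actually uses — that on a small neighbourhood of a fixed $\hbo\neq0$, only finitely many triples $(k,\beta_1,\beta_2)$ are resonant — persists for the corrected condition (for each fixed pair $\beta_1\neq\beta_2$, the equation $(\beta_1-\beta_2)\star\hb=-\hb k$ is quadratic in $\hb$, so the resonant points form a discrete set accumulating only at $0$). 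So the strategy survives once the factor of $\hb$ is restored, but the justification as written does not.

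Two comparisons with the paper are worth noting. First, the paper removes resonances by \emph{shearing transformations}: it rescales the lifted basis vectors by powers of $t$ until no two eigenvalues of the $t$-constant part $R(\hb)$ differ by an element of $\hb\ZZ^*$, and then concludes that the matrix of $t\partiall_t$ can be brought to $R(\hb)$ exactly; you instead keep the resonant components of $Q_k$ as polynomial-in-$t$ corrections. Both work, but the paper arrives at a $t$-constant matrix, which is conceptually sharper. Second, your claim that the residue $\bar A$ is block-diagonal in a basis lifted from the graded pieces is imprecise: a priori the residue is only block lower-triangular with respect to the ordering of $\ell_\hbo(\beta)$, and the paper explicitly performs an additional constant (in $t$) base change to separate the blocks corresponding to distinct values of $\beta\star\hbo$ before reordering.
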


In the following proof, we always assume that $\hbo\neq0$. We will first consider the local situation (w.r.t.\ $\hb$). We will first recall \cite[Remark 5.3.8(3)]{Bibi01c}:

\begin{lemme}\label{lem:localreg}
For any strictly specializable regular $\wt\cM_\ohbo$ or $\wt\cM_\ohbo^\wedge$, there exists a basis in which the matrix of $t\partiall_t$ only depends (holomorphically) on~$\hb$ and is lower-triangular, with eigenvalues of the form $\beta\star\hb$.
\end{lemme}

\begin{proof}
We give the proof in the convergent setting, but it is also valid in the formal setting.

Let us fix $\hbo\neq0$ and let us set\footnote{Recall that the notation $\ell_\hbo(\beta)$ is for $\reel\beta-(\im\beta)(\im\hbo)$, \cf \S\ref{subsec:strictspe}.}
\[
B_{\hbo}=\{\beta\in\CC\mid\ell_\hbo(\beta)\in(-1,0]\text{ and } \psi_t^\beta\wt\cM_\ohbo\neq0\}.
\]
For each $\beta\in B_\hbo$, let us fix a basis of $\psi_t^\beta\wt\cM_\hbo$ for which the matrix of $-\rN$ is constant. Let us lift this basis modulo $V^{>\ell_\hbo(\beta)}_\phbo\wt\cM_\ohbo$. Applying this to any $\beta\in B_\hbo$, we find a basis of $\wt\cM_\ohbo$ for which the matrix of $t\partiall_t$ has entries in $\cO_{\cX,\ohbo}$ and its constant part (with respect to $t$) $R(\hb)$ has the following form: it is block lower triangular, each block corresponding to a value $b$ of $\ell_\hbo(\beta)$, $\beta\in B_\hbo$; moreover, each diagonal block, with corresponding value $b$, is itself block-diagonal, each block corresponding to one $\beta\in B_\hbo$ such that $\ell_\hbo(\beta)=b$, and takes the form $(\beta\star\hb)\id+\rY_\beta$, where $\rY_\beta$ is constant (we can choose the basis such that $\rY_\beta$ has the Jordan normal form).

By a constant (with respect to $t$) base change, we can assume that, if $\beta_1\star\hbo\neq\beta_2\star\hbo$, then the $(\beta_1,\beta_2)$ and the $(\beta_2,\beta_1)$ blocks in the matrix $R(\hb)$ are zero. Therefore, we can reorder the basis in such a way that $R(\hb)$ is block-diagonal, each diagonal block corresponding to a value of $\beta\star\hbo$, $\beta\in B_\hbo$, and each diagonal block is itself lower triangular with respect to the $\ell_\hbo$-order, each diagonal sub-block corresponding to a given $\beta$ and having the form $(\beta\star\hb)\id+\rY_\beta$.

Let us set $\Lambda=B_\hbo+\ZZ$ and let $\Sing\Lambda$ be the set of $\hb\in\Omega_0$ such that there exist $\beta_1\neq\beta_2\in\Lambda\cup\ZZ$ with $\beta_1\star\hb=\beta_2\star\hb$. Then, \cf \cite[p\ptbl17]{Bibi01c}, $\Sing\Lambda$ is contained in~$i\RR$ and has $0$ as its only limit point.

If $\hbo\not\in\Sing\Lambda$ then, for any $\beta_1,\beta_2\in B_\hbo$, we have $\beta_1\star\hbo-\beta_2\star\hbo\not\in\hbo\ZZ^*$, hence, for any $\hb\in\nb(\hbo)$, we also have $\beta_1\star\hb-\beta_2\star\hb\not\in\hb\ZZ^*$. Therefore, the classical arguments of the theory of regular meromorphic differential equations enable us to find a basis of $\wt\cM_\ohbo$ in which the matrix of $t\partiall_t$ is $R(\hb)$.

If $\hbo\in\Sing\Lambda$ (but $\hbo\neq0$), there may exist $\beta_1,\beta_2\in B_\hbo$ with $\beta_1\star\hbo-\beta_2\star\hbo\in\hbo\ZZ^*$. We first apply successive partial rescalings by powers of $t$ to reduce to the case where the constant part $R(\hb)$ (with respect to $t$) of the matrix of $t\partiall_t$ is lower-triangular, and has eigenvalues $\beta\star\hb$ with $\beta\in\Lambda$ and no two eigenvalues differ by an element in $\hb\ZZ^*$. Let us recall this classical reduction:

Let us index by $I$ the set of diagonal blocks of $R(\hb)$, each block corresponding to a value $\gamma_i(\hbo)$ of various $\beta\star\hb$ at $\hbo$. We can order the set $I$ in such a way that, if $\gamma_{i_1}(\hbo)-\gamma_{i_2}(\hbo)\in\hbo\NN^*$, then $i_1<i_2$. We now only retain that the matrix $R(\hb)$ is lower-triangular. Let $i_0\in I$ be such that there exist no $i\in I$ with $\gamma_{i_0}(\hbo)-\gamma_i(\hbo)\in\hbo\NN^*$ (that is, if such a value belongs to $\hbo\ZZ^*$, it must belong to $-\hbo\NN^*$). We apply the base change with diagonal matrix having diagonal blocks $\Id_i$ for $i\neq i_0$ and $t\Id_{i_0}$. After this change, the constant part $R_1(\hb)$ of the matrix of $t\partiall_t$ remains lower-triangular, with the same eigenvalues, except that $\beta\star\hb$ is changed to $(\beta+1)\star\hb$ if $\beta\star\hbo=\gamma_{i_0}(\hbo)$. By an easy induction, we get the assertion.

We can now apply the classical arguments of the theory of regular meromorphic differential equations to find a basis of $\wt\cM_\ohbo$ in which the matrix of $t\partiall_t$ is equal to the previous constant part $R(\hb)$ (with respect to $t$). In particular, it is lower triangular and has eigenvalues of the form $\beta\star\hb$, so we get the assertion of the lemma in the case $\hbo\in\Sing\Lambda$.
\end{proof}

We then have, in the same way, by applying the classical reduction theory with a parameter $\hb$,

\begin{lemme}\label{lem:localreginv}
Near any $\hbo\neq0$, there exists a polynomial $p(\hb)$ which vanishes at most on $\Sing\Lambda\cap\nb(\hbo)$ and an invertible matrix with entries in $\cO_{\Omega_0,\hbo}[1/p(\hb)]$ such that, after the base change given by this matrix, the matrix of $t\partiall_t$ takes the form
\[
\oplus_{\beta\in B_\hbo}\big[(\beta\star\hb)\id+\rY_\beta\big]
\]
where $\rY_\beta$ is a lower triangular nilpotent Jordan matrix. In particular, if we assume that $\hbo\in\bS$, then we can choose $p(\hb)=\hb+1/\hb$.
\end{lemme}

\begin{proof}
In the previous proof, instead of applying shearing transformations near a point $\hbo\in\Sing\Lambda$, we can apply the same arguments as in the case where $\hbo\not\in\Sing\Lambda$. In order to do so, we have to invert near $\hbo$ all functions $p+(\beta_1-\beta_2)\star\hb/\hb$, with $p\in\NN^*$ and $\beta_1,\beta_2\in B_\hbo$. On a fixed neighbourhood $\nb(\hbo)$, only a finite number of such functions vanish at $\hbo$.
\end{proof}

\begin{proof}[\proofname\ of Proposition \ref{prop:structreg}]
We have proved the local existence of $\wt\cR$ in Lemma \ref{lem:localreg}. Given two local bases of $\wt\cM^\wedge_\phbo$ in which the matrix of $t\partiall_t$ has entries in $\cO_{\Omega_0,\hbo}$, any formal base change between both bases has entries in $\cO_{\cX,\ohbo}$ (this is standard, as we work away from $\hb=0$). Therefore, the $\cR_{\cX,\ohbo}[\tm]$-submodule generated by such a local basis is unique, hence can be glued all along $\Omega_0\moins\{0\}$.
\end{proof}

\subsection{Strict specializability with ramification and exponential twist in dimension one}

Let us fix $\hbo\in\Omega_0$ and let $\wt M_\hbo$ be a meromorphic connection or Higgs bundle. Then, there exist $q=q(\hbo)\in\NN^*$ and, denoting by $\wt M_{q,\hbo}$ the pull-back of $\wt M_\hbo$ by the ramification $t_q\mto t=t_q^q$, a finite family $(\varphi_j)_{j\in J(\hbo)}$ of elements of $\tqm\CC[\tqm]$ which satisfy the following properties:
\begin{enumerate}
\item
for some, or any, $V$-lattice $U_{\hbo}$ of $\cE^{-\varphi_j/\hbo}\!\otimes\!\wt M_{q,\hbo}$, $U_{\hbo}/tU_{\hbo}\!\neq\!0$,
\item
for any ramification $t_{rq}\mto t_q=t_{rq}^r$ and any $\varphi\in\trqm\CC[\trqm]$ distinct from any $\varphi_j(t_{rq}^r)$, then $\cE^{-\varphi/\hbo}\otimes\wt M_{rq,\hbo}$ (where $\wt M_{rq,\hbo}$ is the pull-back of $\wt M_{q,\hbo}$) is a $V$-lattice of itself.
\end{enumerate}

Let now $\cM$ be a strict holonomic $\cR_\cX$-module. Then, according to Lemma \ref{lem:holstrict}\eqref{lem:holstrict2}, restricting to $\hb=\hbo$ commutes with localizing away from $\{t=0\}$, so the previous statement applies to the restriction to $\hbo$ of $\wt\cM$.

\begin{proposition}\label{prop:constance}
Let $\wt\cM$ be a $\cR_\cX[\tm]$-module which is $\cO_\cX[\tm]$-locally free and strictly specializable with ramification and exponential twist on~$X$. Then the (smallest) integer $q(\hbo)$ and the set $J(\hbo)$ defined above do not depend on~$\hbo$. We denote by~$q$ and~$J$ their constant value. Then, denoting by $\rho_q:X_q\to X$ the ramification $t_q\mto t=\nobreak t_q^{q}$, and setting $\wt\cM_q=\rho_q^+\wt\cM$,
\begin{enumerate}
\item
for any $j\in J$, there exists $\beta$ with $\reel\beta\in(-1,0]$ such that $\psi_{t_q}^\beta(\cE^{-\varphi_j/\hb}\otimes\nobreak\wt\cM_q)\!\neq\nobreak\!\nobreak0$,
\item
for any ramification mapping $t_{rq}\mto t_q=t_{rq}^r$ and any $\varphi\in\trqm\CC[\trqm]\moins\{\varphi_j(t_{rq}^r)\mid j\in J\}$, for any $\beta$ with $\reel\beta\in(-1,0]$, $\psi_{t_{rq}}^\beta(\cE^{-\varphi/\hb}\otimes\nobreak\wt\cM_{rq})=0$.
\end{enumerate}
\end{proposition}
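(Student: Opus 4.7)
My plan is to identify the pointwise Levelt--Turrittin data $(q(\hbo),J(\hbo))$ attached to each fibre $\wt M_\hbo$ with global invariants supplied by strict specializability, whose ranks are automatically locally constant in $\hbo$.

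First, I would observe that since $X$ has dimension one, $X_0$ is a point, so for every $q\geq 1$ and every $\varphi\in\tqm\CC[\tqm]$ each nearby-cycle module $\psi_{t_q}^\beta(\cE^{-\varphi/\hb}\otimes\wt\cM_q)$ is a coherent $\cO_{\Omega_0}$-module. The assumption of strict specializability with ramification and exponential twist applied to $\cE^{-\varphi/\hb}\otimes\wt\cM_q$ guarantees that all graded pieces of the Kashiwara--Malgrange filtration are $\hb$-torsion-free, hence $\cO_{\Omega_0}$-locally free of finite rank; by connectedness of $\Omega_0$, their ranks are constant in $\hbo$, and vanishing at one $\hbo$ is equivalent to vanishing on all of $\Omega_0$.

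Next, I would verify that, for each $\hbo\in\Omega_0$, the fibre at $\hb=\hbo$ of the global nearby-cycle sheaf coincides with the pointwise nearby cycles of $\cE^{-\varphi/\hbo}\otimes\wt M_{q,\hbo}$. Because $\wt\cM$ is $\cO_\cX[\tm]$-locally free (Lemma \ref{lem:holstrict}) and each graded piece is $\cO_{\Omega_0}$-locally free, the canonical $V$-filtration should restrict, term by term, to a $V$-filtration of the fibre, viewed as a meromorphic connection when $\hbo\neq 0$ and as a meromorphic Higgs bundle when $\hbo=0$. In the Higgs case, Lemma \ref{lem:higgsreg} provides the needed dimensional bookkeeping, isolating exactly the regular part of the restricted object; the purely irregular summands $\cE^{(\varphi_i-\varphi_j)/\hbo}\otimes\wt R_i$ (with $\varphi_i\neq\varphi_j$) then contribute trivially, in parallel with Proposition \ref{prop:strictspereg} in the connection case.

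Combining these two ingredients, the subset of $\tqm\CC[\tqm]$ consisting of $\varphi$ for which some $\psi_{t_q}^\beta(\cE^{-\varphi/\hb}\otimes\wt\cM_q)$ is nonzero would coincide, at every $\hbo$, with the set of Levelt--Turrittin exponents of $\wt M_{q,\hbo}$. This yields the constancy of $J(\hbo)=J$, and hence also of the minimal ramification $q(\hbo)=q$. Statement (1) then follows by selecting, at any chosen $\hbo$, a pointwise index $\beta$ with $\ell_\hbo(\beta)\in(-1,0]$ on which the fibre-wise nearby cycles are nonzero and transferring it to a global $\beta$ with $\reel\beta\in(-1,0]$ via strictness; statement (2) follows by reading the same equivalence in reverse after the further ramification $\rho_r$, using that pointwise, any $\varphi\notin\{\varphi_j(t_{rq}^r)\}_{j\in J}$ produces trivial nearby cycles on $\wt M_{rq,\hbo}$. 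The main obstacle I anticipate is the second step: checking that the global $\psi_{t_q}^\beta$ commutes with pointwise restriction at \emph{every} $\hbo$, and particularly at $\hbo=0$, where the object changes nature from a connection to a Higgs bundle and the natural index for nearby cycles must be reinterpreted -- Lemma \ref{lem:higgsreg} is what should bridge the two regimes.
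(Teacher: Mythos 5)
Your approach is essentially the one in the paper: (i) local freeness of $\psi_{t_{rq}}^\beta(\cE^{-\varphi/\hb}\otimes\wt\cM_{rq})$ over $\cO_{\Omega_0}$ makes vanishing independent of $\hbo$; (ii) restriction to $\hb=\hbo$ commutes with localization by Lemma \ref{lem:holstrict}\eqref{lem:holstrict2}; (iii) at each $\hbo$, the nonvanishing criterion is read off the Levelt--Turrittin (or Higgs) formal classification of the fibre, with Lemma \ref{lem:higgsreg} doing the bookkeeping at $\hbo=0$.

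The one step you flag as an anticipated obstacle is genuinely the crux, and the way you phrase it is slightly stronger than what is true or needed. You say the canonical $V$-filtration ``should restrict, term by term, to a $V$-filtration of the fibre.'' This is overstated: multiple indices $\beta$ can coalesce at a given $\hbo$ (distinct $\beta$ may have $\beta\star\hbo$ equal), so the restriction of the canonical $V$-filtration is only a \emph{good} $V$-filtration of $\wt M_{\hbo}$ (not the Kashiwara--Malgrange one), and the restriction to $\hb=\hbo$ of $\psi_{t_{rq}}^\beta(\cE^{-\varphi/\hb}\otimes\wt\cM_{rq})$ is only a \emph{direct summand} of a graded piece of that good filtration. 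This weaker statement is exactly what Proposition 3.3.14 of \cite{Bibi01c} provides, and it suffices because a good $V$-filtration of a purely irregular summand is eventually constant (zero graded pieces), whereas one nonzero graded piece survives as soon as a regular summand is present. So your sketch lands in the right place; closing the gap amounts to invoking that proposition rather than a verbatim ``commutes with restriction,'' and then observing that nonvanishing of a graded piece of any good $V$-filtration detects the presence of a regular part in the formal decomposition.
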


\begin{proof}
Let us set for convenience $q=q(1)$ and $J=J(1)$. By assumption, for any $r\geq1$, $\psi_{t_{rq}}^\beta(\cE^{-\varphi/\hb}\otimes\wt\cM_{rq})$ is locally free as a $\cO_{\Omega_0}$-module. Therefore, its restriction at some $\hb=\hbo$ vanishes (\resp does not vanish) as soon as its restriction at $\hb=1$ does so. The restriction to $\hb=\hbo$ of $\cE^{-\varphi/\hb}\otimes\wt\cM_{rq}$ is $\cE^{-\varphi/\hbo}\otimes\wt M_{rq,\hbo}$ for any $\hbo$, after Lemma \ref{lem:holstrict}\eqref{lem:holstrict2}. According to \cite[Prop\ptbl3.3.14]{Bibi01c}, the restriction to $\hb=\hbo$ of $\psi_{t_{rq}}^\beta(\cE^{-\varphi/\hb}\otimes\wt\cM_{rq})$ is a direct summand of a graded piece of $\cE^{-\varphi/\hbo}\otimes\wt M_{rq,\hbo}$ with respect to a suitable good $V$-filtration. It is then easily seen that the nonvanishing of such a graded piece is equivalent to the existence of a regular part in the formal decomposition of $\cE^{-\varphi/\hbo}\otimes\wt M_{rq,\hbo}$. The proposition follows.
\end{proof}

\begin{remarques}[Formal coefficients]\label{rem:formal}\mbox{}
\begin{enumerate}
\item\label{rem:formal1}
The same result applies to $\cR_{\wh\cX}$-modules satisfying analogous assumptions.
\item\label{rem:formal2}
Notice also that, if $\wt\cM^\wedge=\cO_{\wh\cX}\otimes_{\cO_\cX}\wt\cM$ is the associated formalized module, then $\wt\cM$ satisfies the assumption of the proposition if and only if $\wt\cM^\wedge$ does so. Moreover, for any ramification, we have $(\wt\cM_q)^\wedge=(\wt\cM^\wedge)_q$ and for any $\beta\in\CC$ and $\varphi\in \tqm\CC[\tqm]$, we have
\[
\psi_{t_q}^\beta(\cE^{-\varphi/\hb}\otimes\wt\cM_q) =\psi_{t_q}^\beta(\cE^{-\varphi/\hb}\otimes(\wt\cM^\wedge)_q).
\]
Indeed, ``only if'' is clear by flatness of $\cO_{\wh\cX}$ over $\cO_\cX$. For the ``if'' part, we note that if $U^\cbbullet\wt\cM$ is a good $V$-filtration of $\wt\cM$, then $U^\bbullet\wt\cM^\wedge\defin\cO_{\wh\cX}\otimes_{\cO_\cX}U^\cbbullet\wt\cM$ is a good $V$-filtration of $\wt\cM^\wedge$. Hence $\wt\cM$ is specializable (in the sense of \cite[Def\ptbl3.3.1]{Bibi01c}) if $\wt\cM^\wedge$ is so. By standard manipulation, we can assume that (near $\hbo$) the Bernstein polynomial of the good filtration $U^\bbullet\wt\cM$ has roots in $\Lambda(\hbo)$ (see \cite[p\ptbl67]{Bibi01c}), and therefore so does $U^\bbullet\wt\cM^\wedge$, which is then the canonical $V$-filtration of $\wt\cM^\wedge$, by \cite[Lemma 3.3.4]{Bibi01c}. In particular, the graded pieces are strict by assumption, hence so are the graded pieces of $U^\bbullet\wt\cM$. This implies that $\wt\cM$ is strictly specializable, with $U^\bbullet\wt\cM$ as canonical good $V$-filtration. The remaining part of the assertion is then easy.
\end{enumerate}
\end{remarques}

\subsubsection{Submodules}
Let $\wt\cM$ be as in Proposition \ref{prop:constance}. Let $m\in\wt\cM_\ohbo$ and let $\wt\cN=\sum_k\cO_\cX[\tm]\partiall_t^km$ be the $\cR_\cX[\tm]$-submodule generated by $m$ (in some small neighbourhood of $\ohbo$). Arguing as in Lemma \ref{lem:holstrict}\eqref{lem:holstrict1}, we see that $\wt\cN$ is $\cO_\cX[\tm]$-free in some neighbourhood of $\ohbo$. Let $\delta+1$ denote its rank.

\begin{lemme}\label{lem:submodules}
The $\cR_\cX[\tm]$-module $\wt\cN$ is strictly specializable with ramification and exponential twist and for any ramification $t_r\mto\nobreak t$, any $\varphi\in \trm\CC[\trm]$ and any $\beta\in\CC$, we have $\psi_{t_r}^\beta(\cE^{-\varphi/\hb}\otimes\wt\cN_r)\subset\psi_{t_r}^\beta(\cE^{-\varphi/\hb}\otimes\wt\cM_r)$.
\end{lemme}

\begin{proof}
As the ramified twisted module $\cE^{-\varphi/\hb}\otimes\wt\cN_r$ is a submodule of $\cE^{-\varphi/\hb}\otimes\wt\cM_r$, it is enough to show that the good $V$-filtration of the latter defined near $\hbo$ (\cf \cite[\S3.4.a]{Bibi01c}) induces a good $V$-filtration of the former near $\hbo$. For the sake of simplicity, we will give the proof for $\wt\cN\subset\wt\cM$, but it will apply similarly to ramified twisted modules.

If we have shown that $V^\cbbullet_\phbo\wt\cN\defin V^\cbbullet_\phbo\wt\cM\cap\wt\cN$ is a good $V$-filtration of $\wt\cN$, then each $\gr^b_{V_\phbo}\wt\cN$ is strict, as a submodule of a strict module, and it has a decomposition as $\oplus_\beta\psi_t^\beta\wt\cN$ (\cf \cite[Lemma~3.3.4]{Bibi01c}), showing that each $\psi_t^\beta\wt\cN$ is itself strict. By construction, we have $t^k\psi_t^\beta\wt\cN=\psi_t^{\beta+k}\wt\cN$ (as this is true for $\wt\cM$ in the place of $\wt\cN$), hence the strict specializability of $\wt\cN$ along $\{t=0\}$.

It remains to showing that, for any $b\in\RR$, $V^b_\phbo\wt\cM\cap\cN$ is $V_0(\cR_\cX)$-coherent. This is standard.
\end{proof}

\subsubsection{Formal structure}
Let $\wt\cM$ be as in Proposition \ref{prop:constance}. We fix the order of ramification as in the proposition, and consider $\wt\cM_q=\rho_q^+\wt\cM$.

\begin{proposition}\label{prop:structformelle}
With these assumptions, for any $j\in J$, there exist regular strictly specializable $\cO_{\wh{\cX_q}}[\tqm]$-modules with $\hb$-connection $\wt\cR_j^\wedge$ and an isomorphism of $\cR_{\wh{\cX_q}}[\tqm]$-modules
\[\tag{DEC$^\wedge$}
\wt\cM_q^\wedge\isom \tbigoplus_{j}(\cE^{\varphi_j/\hb}\otimes\wt\cR_j^\wedge).
\]
\end{proposition}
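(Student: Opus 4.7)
The plan is to proceed in the spirit of the classical Hukuhara-Turrittin-Sibuya decomposition, but carried out in an $\hb$-family setting. After replacing $\wt\cM$ by $\wt\cM_q$, we may (and will) assume for notational simplicity that $q=1$, so that every $\varphi_j$ lies in $\tm\CC[\tm]$. The heart of the proof consists in constructing, for each $j\in J$, a \emph{regular part} $\wt\cR_j^\wedge$ of the twisted module $\cE^{-\varphi_j/\hb}\otimes\wt\cM^\wedge$, and in assembling these into the stated direct sum decomposition $(\mathrm{DEC}^\wedge)$.

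For each $j\in J$, set $\wt\cN_j^\wedge\defin\cE^{-\varphi_j/\hb}\otimes_{\cO_{\wh\cX}[\tm]}\wt\cM^\wedge$. Using Proposition \ref{prop:constance} applied to $\wt\cM^\wedge$, together with the identity $\cE^{-\varphi/\hb}\otimes\cE^{-\varphi_j/\hb}\simeq\cE^{-(\varphi+\varphi_j)/\hb}$, one checks that $\wt\cN_j^\wedge$ is itself strictly specializable with ramification and exponential twist and that its set of exponential slopes after any ramification $\rho_r$ is exactly $\{(\varphi_i-\varphi_j)(t_r^r)\mid i\in J\}$; the index $i=j$ contributes the regular slope $0$, and the remaining indices are genuinely irregular because the $\varphi_j$ are pairwise distinct in $\tm\CC[\tm]$. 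One then defines $\wt\cR_j^\wedge$ as the regular summand in the formal regular/irregular decomposition
\[
\wt\cN_j^\wedge\;\isom\;\wt\cR_j^\wedge\oplus\wt\cN_j^{\wedge,\ir},
\]
which is the $\hb$-family analogue of \eqref{eq:regirreg}. Strict specializability of $\wt\cR_j^\wedge$ then follows from its realization as a direct summand of a strictly specializable module, and regularity follows from Proposition \ref{prop:constance}(2) combined with the characterization of Proposition \ref{prop:reg}\eqref{prop:reg4}.

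The main obstacle is producing this $\hb$-family splitting of $\wt\cN_j^\wedge$. At each fixed $\hbo\in\Omega_0$ the splitting already exists: for $\hbo\neq0$ it is the classical regular/irregular decomposition of the formal meromorphic connection $(\wt\cN_j^\wedge)_{\hbo}$, while for $\hbo=0$ it is the Higgs decomposition \eqref{eq:regirreg} reviewed in \S\ref{subsec:higgslocalone}. To lift these pointwise splittings to an $\hb$-holomorphic one over all of $\Omega_0$, one performs the standard Hukuhara-Turrittin iteration in the parameter $\hb$: fix a $\cO_{\wh\cX}[\tm]$-basis of $\wt\cN_j^\wedge$ adapted to the splitting at a base point, write the matrix of $t\partiall_t$ in block form corresponding to the regular and irregular parts, and construct a gauge transformation with entries in $\cO_{\wh\cX}[\tm]$ killing the off-diagonal blocks by solving successive Sylvester-type cohomological equations. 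The solvability and convergence of this iteration depend on a ``gap'' between the leading parts of the diagonal blocks, and the uniformity of this gap in $\hb$ is precisely what Proposition \ref{prop:constance} ensures: the exponents $\varphi_j$, and hence the differences $\varphi_i-\varphi_j$, do not depend on $\hb$.

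Once each $\wt\cR_j^\wedge$ is in hand, the inclusion $\wt\cR_j^\wedge\hto\wt\cN_j^\wedge$ rewrites as a morphism $\cE^{\varphi_j/\hb}\otimes\wt\cR_j^\wedge\to\wt\cM^\wedge$, and these assemble into
\[
\Phi\colon\tbigoplus_{j\in J}\bigl(\cE^{\varphi_j/\hb}\otimes\wt\cR_j^\wedge\bigr)\lra\wt\cM^\wedge.
\]
To conclude that $\Phi$ is an isomorphism, one checks that both source and target are $\cO_{\wh\cX}[\tm]$-locally free of the same finite rank and that $\Phi$ specializes at every $\hbo\in\Omega_0$ to an isomorphism: at $\hbo\neq0$ this is the classical Turrittin-Levelt decomposition \eqref{eq:decomp} for $(\wt\cM^\wedge)_{\hbo}$, and at $\hbo=0$ it is the Higgs decomposition \eqref{eq:decHiggs} for $(\wt\cM^\wedge)_0$. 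The rank equality follows from applying Proposition \ref{prop:reg}\eqref{prop:reg3} to each $\wt\cR_j^\wedge$ and summing, using that by Proposition \ref{prop:constance}(1) each $\psi_t^\beta(\cE^{-\varphi_j/\hb}\otimes\wt\cM^\wedge)$ contributes exactly to the regular part of $\wt\cN_j^\wedge$ and hence to $\wt\cR_j^\wedge$.
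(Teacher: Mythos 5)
Your overall strategy---construct the regular summand of each twist $\wt\cN_j^\wedge=\cE^{-\varphi_j/\hb}\otimes\wt\cM^\wedge$, then assemble the inclusions into the asserted isomorphism---is exactly the strategy the paper follows. But the step you yourself flag as the ``main obstacle'' is precisely where your sketch has a real gap, and the paper expends most of its effort on that step.

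You propose to ``fix a $\cO_{\wh\cX}[\tm]$-basis of $\wt\cN_j^\wedge$ adapted to the splitting at a base point, write the matrix of $t\partiall_t$ in block form \ldots\ and construct a gauge transformation \ldots\ by solving successive Sylvester-type cohomological equations,'' with convergence ensured because the $\varphi_j$ are $\hb$-independent. The difficulty is that in an arbitrary $\cO_{\wh\cX,\ohbo}[\tm]$-basis---even one that specializes nicely at a single $\hbo$---the matrix of $t\partiall_t$ may have a $t$-pole order strictly larger than the pole order of the $\varphi_j$, and its constant part (with respect to $t$) may then be degenerate and fail to display the ``gap'' that the Sylvester iteration needs to start. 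Worse, this excess pole locus (the \emph{apparent singularity}) is a curve in $(t,\hb)$-space that can approach $\{t=0\}$ as $\hb$ varies, so the iteration would produce coefficients with uncontrolled $\hb$-poles. The paper handles this head-on: it first reduces to cyclic $\cR$-submodules $\wt\cN_\ohbo$ generated by a single section (Lemma~\ref{lem:submodules}), replays the argument of Proposition~\ref{prop:reg}, \eqref{prop:reg1}$\Rightarrow$\eqref{prop:reg3}, applied to $t^k(t\partiall_t)$ (with $k$ the maximal pole order of the $\varphi_j$), and uses Malgrange's factorization lemma to eliminate the apparent singularity. Only after this does the constant part of the matrix have eigenvalues in $\CC$ with the requisite separation, so that the classical block-splitting and the rank induction can proceed. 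Your sketch silently assumes this normalization has already been achieved.

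A second point you elide is the globalization in $\hb$. Your phrasing (``lift these pointwise splittings to an $\hb$-holomorphic one over all of $\Omega_0$'' by iteration) suggests a one-shot construction over all of $\Omega_0$, but the local arguments above only give germs near each $\ohbo$. The paper glues them by giving an intrinsic characterization of the irregular part---$\wt\cM_{\irr,\ohbo}=\bigcap_k U_k\wt\cM_\ohbo$ for any local good $V$-filtration, hence globally defined---and then using that any two local splittings of $0\to\wt\cM_\irr\to\wt\cM\to\wt\cM/\wt\cM_\irr\to0$ must coincide, since there is no nonzero $\cR_{\wh\cX}[\tm]$-morphism $\wt\cR^\wedge_{j_0,\ohbo}\to\cE^{\varphi_j/\hb}\otimes\wt\cR^\wedge_{j,\ohbo}$ for $j\neq j_0$. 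Without an argument of this type, the local splittings you produce need not patch. The final verification that $\Phi$ is an isomorphism via fiberwise specialization is fine once the local and global constructions are in place, but it does not repair the two omissions above.
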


\begin{remarque}
By uniqueness, the restriction to $\hb=\hbo$ of (DEC$^\wedge$) is nothing but the formal decomposition \eqref{eq:decomp} if $\hbo\neq0$ or that obtained from \eqref{eq:decHiggs} by tensoring with $\cO_{\wh X,0}$ if $\hbo=0$.
\end{remarque}

\begin{proof}
It will be convenient to change notation and forget the $_q$ everywhere, so the coordinate is denoted by $t$, etc. Moreover, we will only work with $\cO_{\wh\cX}[\tm]$-modules, so we will also forget the exponent ${}^\wedge$.

We first prove the local existence of $\wt\cR^\wedge_{j,\ohbo}$ and of the local decomposition (DEC$^\wedge$). We then prove that this decomposition is globally defined along the $\hb$-variable.

\subsubsection*{Local decomposition}
We argue by induction on the rank of $\wt\cM$. Assume moreover that we have proved the existence of a decomposition (DEC$^\wedge$) for any submodule $\wt\cN_\ohbo$ of $\wt\cM_\ohbo$ which is generated by one section~$m$ as in Lemma \ref{lem:submodules}. Then, as there is no nonzero morphism from $\cE^{\varphi/\hb}\otimes\wt\cR_{\varphi,\ohbo}^\wedge$ to $\cE^{\psi/\hb}\otimes\wt\cR_{\psi,\ohbo}^\wedge$ if $\psi\neq\varphi$, and as
$\wt\cM_\ohbo$ is the sum of a finite number of such submodules $\wt\cN_\ohbo$, the decomposition (DEC$^\wedge$) holds for $\wt\cM_\ohbo$.

Let $m\in\wt\cM_\ohbo$ and let $\wt\cN_\ohbo$ be as in Lemma \ref{lem:submodules}. We are thus reduced to proving the decomposition (DEC$^\wedge$) for $\wt\cN_\ohbo$.

Let us denote by $k$ the maximal order of the pole of the family $(\varphi_j)_j$ (associated to $\wt\cN_\ohbo$ by Proposition \ref{prop:constance}). We can assume that there exist two indices $j$ such that the corresponding $\varphi_j$ have a pole of order~$k$ but with distinct coefficients of $t^{-k}$ (otherwise, one would first tensor $\wt\cN$ with $\cE^{-ct^{-k}/\hb}$).

We now argue exactly as in the regular case (Prop\ptbl\ref{prop:reg}, proof of \eqref{prop:reg1}$\implique$\eqref{prop:reg3}), except that we replace $t\partiall_t$ with $t^k(t\partiall_t)$. We find a $\cO_{\wh\cX,\ohbo}$-basis $\mug$ of $\wt\cN_\ohbo$ in which the matrix of $t^k(t\partiall_t)$ has entries in $\cO_{\wh\cX,\ohbo}$. The constant (with respect to~$t$) part of this matrix has eigenvalues in $\CC$ (the coefficients of $t^{-k}$ in the $\varphi_j$) and at least two of them are distinct. A classical argument gives a splitting of $\wt\cN_\ohbo$ corresponding to these eigenvalues. We conclude by the inductive assumption.

\subsubsection*{Globalization}
We will show the existence of a global formal decomposition $\wt\cM=\wt\cM_\reg\oplus\wt\cM_\irr$. If $j_0$ is such that $\varphi_{j_0}=0$, we will set $\wt\cR_{j_0}^\wedge\defin\wt\cM_\reg$. In order to get the other terms of the decomposition, we apply the same argument to any $\cE^{-\varphi_j/\hb}\otimes\wt\cM$.

Given any local good $V$-filtration $U_\bbullet\wt\cM_\ohbo$, we set $\wt\cM_{\irr,\ohbo}=\bigcap_k U_k\wt\cM_\ohbo$. Any two such filtrations define the same $\wt\cM_{\irr,\ohbo}$, hence $\wt\cM_\irr$ is globally defined. Moreover, using the local decomposition (DEC$^\wedge$), one easily checks that $\wt\cM_{\irr,\ohbo}$ corresponds to the sum indexed by $j\neq j_0$. Hence $\wt\cM_\irr$ is a locally free $\wt\cO_{\wh\cX}[\tm]$-module of finite rank. The globally defined exact sequence
\[
0\to\wt\cM_\irr\to\wt\cM\to\wt\cM/\wt\cM_\irr\to0
\]
has local splittings, according to the local (DEC$^\wedge$). However, any two local splittings coincide, as there is no nonzero morphism $\wt\cR^\wedge_{j_0,\ohbo}\to\cE^{\varphi_j/\hb}\otimes\wt\cR^\wedge_{j,\ohbo}$ for $j\neq j_0$. Therefore, there exists a unique global splitting.
\end{proof}

It will be important to lift this result to convergent isomorphisms. However, as in the theory of meromorphic differential equations, one can only expect sectorial (with respect to $t$) liftings, when $\hbo\neq0$. We will also provide holomorphic liftings when $\hbo=0$. There, a sectorial partition with respect to $\hb$ is also needed.

\subsubsection{Sectorial structure ($\hbo\neq0$)}\label{subsec:sectneq0}
Let $\cM$ be as in Proposition \ref{prop:constance}. We also assume, in order to simplify the notation, that no ramification is needed, so that we can apply Proposition \ref{prop:structformelle} to $\wt\cM^\wedge$ and we have an isomorphism
\[\tag{DEC$^\wedge$}
\wt\cM^\wedge\isom \tbigoplus_{\varphi}(\cE^{\varphi/\hb}\otimes\wt\cR_\varphi^\wedge).
\]
Let us first work away from $\hb=0$. Then, by Proposition \ref{prop:structreg}, $\wt\cR_\varphi^\wedge$ is defined over $\cR_\cX$, and thus we can write the previous decomposition as
\begin{equation}\label{eq:decompform}
\wt\cM^\wedge\isom \tbigoplus_{\varphi}(\cE^{\varphi/\hb}\otimes\wt\cR_\varphi)^\wedge.
\end{equation}

Let $Y$ denote the real blow-up of the disc $X$ of radius $r_0$, \ie $Y=[0,r_0)\times S^1$ and let $e:Y\to X$ denote the projection $(r,e^{i\theta})\mto re^{i\theta}$. Let us set $\cY=Y\times\Omega_0$ and let us define the sheaves $\cA_\cY$, $\cA_\cY^{<0}$ and $\cCh{\cY}$ as usual: $\cCh{\cY}$ denotes the sheaf of $C^\infty$ functions on~$\cY$ which are holomorphic with respect to $\hb$, $\cA_\cY$ is the kernel of $\ov{t\partial_t}$ acting on $\cCh{\cY}$ and $\cA_\cY^{<0}$ denotes the subsheaf of $\cA_\cY$ consisting of germs having all their derivatives with respect to $t\partial_t$ vanishing on $\{r=0\}$.

It will be convenient to set
\[
\wt\cM^\sA_\othbo\defin\cA_{\cY,\othbo}\otimes_{\cO_{\cX,\ohbo}}\wt\cM_\ohbo.
\]
We refer to the literature (\eg \cite{Sibuya62}, \cite{Sibuya90bis}, \cite{Malgrange91}) for the results concerning the Hukuhara-Turrittin theorem with a parameter, as well as for the inductive process giving the $\cA$-decomposition.

\begin{proposition}\label{prop:decompA}
With the previous assumptions (and $\hbo\neq0$), for any $\theta_o\in S^1$, the decomposition \eqref{eq:decompform} can be lifted to a decomposition
\[\tag{DEC$^\sA$}
\wt\cM^\sA_\othbo\isom \tbigoplus_{\varphi}\Big[\cA_{\cY,\othbo}\otimes_{\cO_{\cX,\ohbo}}(\cE^{\varphi/\hb}\otimes\wt\cR_\varphi)_\ohbo\Big].
\]
\end{proposition}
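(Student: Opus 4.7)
The assertion is a parametric Hukuhara-Turrittin lifting of the formal decomposition \eqref{eq:decompform} at $(\theta_o, \hbo)$ with $\hbo \neq 0$. I would argue by induction on $\#J$. The inductive step consists in splitting off one summand $\cE^{\varphi_0/\hb}\otimes\wt\cR_{\varphi_0}$ from the remaining ones; after tensoring by $\cE^{-\varphi_0/\hb}$, this amounts to splitting a regular summand off a sum of purely irregular terms $\cE^{(\varphi_j-\varphi_0)/\hb}\otimes\wt\cR_{\varphi_j}$, $j \neq 0$.

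Via the Malgrange-Sibuya cohomological formalism, this splitting reduces to solving, on a small sector-disk $S \times D$ around $(\theta_o, \hbo)$ in $Y \times \Omega_0$, inhomogeneous equations of the form $(\partiall_t - \partial_t\varphi_{ji})\,u = f$ with $f \in \cA_\cY^{<0}$, where $\varphi_{ji} = \varphi_j - \varphi_i$ has a pole of order $k_{ji} \geq 1$ at $t = 0$. The solution is given by integration of $f$ along a ray on which $e^{\varphi_{ji}/\hb}$ decays rapidly, and it lies in $\cA_\cY^{<0}$ with bounds uniform in $\hb \in D$, provided the sign of $\reel(\varphi_{ji}/\hb)$ is constant and nonzero on $S \times D$.

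The key analytic input is therefore the uniform control of Stokes data in the parameter $\hb$. Writing $\varphi_{ji} = c_{ji}\,t^{-k_{ji}}(1 + o(1))$ with $c_{ji} \neq 0$, the Stokes locus for the pair $(i,j)$ is the real hypersurface $\reel(c_{ji}e^{-ik_{ji}\theta}/\hb) = 0$ in $(\theta,\hb)$-space. Since $\hbo \neq 0$, this locus is smooth near $(\theta_o, \hbo)$ and depends continuously on $\hb$, so after shrinking $S$ and $D$ one can avoid the finitely many Stokes loci on $S \times D$ simultaneously, with definite signs. The dominance partial order $\varphi_i \prec \varphi_j$ used to guide the induction is induced by these signs, namely $\reel((\varphi_j-\varphi_i)/\hb) \to +\infty$ as $r \to 0^+$ on $S \times D$.

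The main obstacle is precisely this uniformity in $\hb$: everything hinges on $\hbo \neq 0$, so that $1/\hb$ is holomorphic near $\hbo$ and the Stokes directions vary continuously; the case $\hbo = 0$ cannot be handled this way and must be treated separately (as the paper does in the sequel). Once the uniformity is granted, the remainder is the classical parametric Hukuhara-Sibuya construction \cite{Sibuya62, Sibuya90bis, Malgrange91}, peeling off summands one by one in an order compatible with the dominance order.
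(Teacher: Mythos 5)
Your proposal follows essentially the same route as the paper's sketch: since $\hbo\neq0$, one views $\wt\cM$ near $\ohbo$ as a holomorphic family of germs of meromorphic connections parametrized by $\hb$, observes that the exponential factors $\varphi_j$ are independent of $\hb$, and runs the classical sectorial Hukuhara--Turrittin lifting with $\hb$ as a holomorphic parameter, citing the same references (Sibuya, Malgrange).

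One imprecision in your detailed mechanism: the assertion that, after shrinking $S$ and $D$, one can avoid all the Stokes loci $\reel(c_{ji}e^{-ik_{ji}\theta}/\hb)=0$ simultaneously fails whenever $(\theta_o,\hbo)$ itself lies on such a locus, since a real hypersurface through the base point meets every neighbourhood of it. The proposition nonetheless holds for those $\theta_o$ as well: in the classical construction one takes the sector of opening small compared with $\pi/k_{ji}$ for each pair $(i,j)$ and chooses the integration contour for $(\partiall_t-\partial_t\varphi_{ji})u=f$ to enter the region where $e^{\varphi_{ji}/\hb}$ decays even when the sign changes on $S$ (this is precisely where non-uniqueness, i.e.\ the Stokes phenomenon, originates); alternatively, the Malgrange--Sibuya cohomological vanishing used in the sequel does not require avoidance of Stokes directions. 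Since you ultimately defer to the classical references for this step, the gap is closed there, but the phrase ``avoid the Stokes loci'' should be dropped in favour of ``take the sector opening small enough and adapt the integration paths.''
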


\begin{proof}[Sketch of proof]
As we work near $\hbo\neq0$, we can regard $\wt\cM$ as an analytic family of meromorphic connections parametrized by $\nb(\hbo)$. The proof is then analogous to that for meromorphic connections (without parameter $\hb$), as the exponential factors~$\varphi_j$ are independent of~$\hb$. One proceeds exactly as for the theorem of Hukuhara-Turrittin in order to show that the decomposition (DEC$^\wedge$) can be locally lifted to $\wt\cM^\sA_\othbo$. At the end of the process, we get a basis of $\wt\cM^\sA_{\othbo}$ in which the matrix of $t\partiall_t$ is the sum of a matrix $R(\hb)$ depending only on~$\hb$ and a matrix having entries in $\cA^{<0}_{\cY,\othbo}$ (according to Lemma \ref{lem:localreg}). Another application of the same kind of existence theorems gives a base change of the form $\id+Q(t,\hb)$ with~$Q$ having entries in $\cA^{<0}_{\cY,\othbo}$, after which the matrix of $t\partiall_t$ is equal to $R(\hb)$.
\end{proof}

\subsubsection{Sectorial structure ($\hbo=0$)}\label{subsec:sect0}
When $\hbo=0$, the lifting of the decomposition and the definition of $\partiall_t=\hb\partial_t$ lead us to solve successive quasi-linear differential systems like
\[
\hb t^k(t\partial_t)u=L\cdot u+F(u,t,\hb)
\]
where $L$ is diagonal with constant nonzero eigenvalues and where we have information on~$F$ (existence of asymptotic expansions in sectorial domains). Here, $\hb$ plays the role of a small parameter, and taking sectors with respect to $\hb$ as well as for $t$ will be needed. In this direction, the main results have been previously proved by Russell and Sibuya \cite{R-S66,R-S68} ($k=0$), Sibuya \cite{Sibuya68} and then by Majima \cite{Majima84} ($k\geq1$). An account of the results of Majima has been given in \cite[Appendix]{Bibi93}.

We denote by $\cZ$ the space $Y\times \wt\Omega_0$, where $\wt\Omega_0=\RR_+\times S^1$ is the real blow-up (polar coordinates) of $\Omega_0$ at $0$. We will set $\hb=\module\hb\cdot e^{i\arghb}$. We have a natural map $\cZ\to\cY=Y\times\Omega_0$. We denote by $\cC^\infty_\cZ$ the sheaf of $C^\infty$ functions on the manifold with corners $\cZ$, and by $\cA_\cZ$ the subsheaf defined by the Cauchy-Riemann equations with respect to $t$ and $\hb$. In particular, when restricting to $\module\hb\neq0$, we recover the restriction of $\cA_{\cY}$ to this open set.

The next result will not be used in this article, but we give it for the sake of being complete.

\begin{proposition}\label{prop:decompA0}
With the assumptions of \S\ref{subsec:sectneq0}, for any $(\theta_o,\arghbo)\in S^1\times S^1$, the decomposition \textup{(DEC$^\wedge$)} can be lifted to a decomposition
\[\tag{DEC$^\sA_0$}
\wt\cM^\sA_\otahbo\isom \tbigoplus_{\varphi}\big[\cE^{\varphi/\hb}\otimes\wt\cR_{\varphi,\otahbo}^\sA\big]
\]
for some regular $\cA_{\cZ,\otahbo}[\tm]$-free modules $\wt\cR_{\varphi,\otahbo}^\sA$, in such a way that, modulo~$\hb$, this isomorphism is equal to the isomorphism \eqref{eq:decHiggs} (hence does neither depend on~$\theta_o$ nor on~$\arghbo$) and in particular
\[
\wt\cR_{\varphi,\otahbo}^\sA/\hb\wt\cR_{\varphi,\otahbo}^\sA\simeq \cA_{Y,(0,\theta_0)}\otimes_{\cO_{X,0}}\wt R_\varphi.
\]
\end{proposition}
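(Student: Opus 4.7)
The plan is to mimic the proof of Proposition \ref{prop:decompA}, replacing the classical Hukuhara--Turrittin theorem with parameter by the sectorial asymptotic theorems of Russell--Sibuya, Sibuya and Majima cited at the beginning of \S\ref{subsec:sect0}, which handle precisely systems of the form $\hb t^k(t\partial_t)u = L\cdot u+F(u,t,\hb)$ where $\hb$ is itself a small parameter and where one must split the real blow-up in both $t$ and $\hb$. Concretely, I would first fix $(\theta_o,\arghbo)$ and work in a polysector
\[
\Sigma=\{0<\mt<\eta_1,\,\module{\arg t-\theta_o}<\epsilon_1\}\times\{0<\mhb<\eta_2,\,\module{\arg\hb-\arghbo}<\epsilon_2\}
\]
viewed as a neighbourhood of $(\theta_o,\arghbo)$ in $\cZ$, over which the sheaves $\cA_\cZ$ and $\cA_\cZ^{<0}$ are defined.

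Next, I would translate (DEC$^\wedge$) into the problem of solving a system of decomposition equations. Picking a formal basis realizing (DEC$^\wedge$) gives a matrix of $t\partiall_t=\hb t\partial_t$ of block form $\diag_j(-d\varphi_j\otimes\id)+R(t,\hb)$ with $R\in M(\cO_{\wh\cX,\ohbo})$ of bounded pole order; the block structure is witnessed by the fact that for $i\neq j$, $\varphi_i-\varphi_j$ has a pole of order $\geq 1$, so the matrix $d\varphi_i-d\varphi_j$ is an invertible $\CC[\tqm]$-multiplier after clearing denominators. Searching for a block-diagonalizing gauge transformation $I+Q$ then leads, exactly as in the classical Hukuhara--Turrittin reduction, to successive quasi-linear systems of the type recalled in \S\ref{subsec:sect0}. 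Formal solutions for these systems are supplied by (DEC$^\wedge$).

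The essential step is then to invoke Majima's theorem (see \cite[Appendix]{Bibi93}): a formal solution $\wh Q$ of such a quasi-linear system, with polar order prescribed by the family $(\varphi_j)$, is $(t,\hb)$-asymptotic on $\Sigma$ to a genuine $C^\infty$ solution $Q\in\cA_\cZ^{<0}(\Sigma)$ (holomorphic in $t$ and $\hb$). Applying this at each stage of the inductive decomposition yields a base change in $\cA_{\cZ,\otahbo}$ in which the matrix of $t\partiall_t$ becomes block-diagonal; factoring out $\cE^{\varphi_j/\hb}$ from each block leaves an operator whose matrix has no pole in $t$, hence defines a regular object, which we take as $\wt\cR^{\sA}_{\varphi,\otahbo}$. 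Freeness of $\wt\cR^{\sA}_{\varphi,\otahbo}$ over $\cA_{\cZ,\otahbo}[\tm]$ follows from the construction.

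Finally, the normalization modulo $\hb$ is essentially automatic: the sectorial splitting agrees, as a formal series in $t$ and $\hb$, with (DEC$^\wedge$), and the restriction of (DEC$^\wedge$) to $\hb=0$ reproduces the unique Higgs decomposition \eqref{eq:decHiggs}; hence the induced isomorphism $\wt\cR^{\sA}_{\varphi,\otahbo}/\hb\wt\cR^{\sA}_{\varphi,\otahbo}\simeq\cA_{Y,(0,\theta_o)}\otimes_{\cO_{X,0}}\wt R_\varphi$ is the one already identified in \eqref{eq:decHiggs}, independent of $\theta_o$ and $\arghbo$. The main technical obstacle is indeed the last appeal: one is facing a singularly perturbed family in the parameter $\hb$, and the Stokes phenomena can now appear transversally to $\hb=0$, so the asymptotic expansions must be controlled simultaneously in the two variables on a polysector; this is exactly the content of the Majima-type results, and the verification that our systems fit their hypotheses (a regular leading coefficient $L$ coming from $d\varphi_i-d\varphi_j$, and a perturbation $F$ of the correct asymptotic nature) is the only place where care is required.
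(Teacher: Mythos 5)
Your proposal follows the same strategy as the paper's proof: reduce to Riccati-type decomposition equations for a block-diagonalizing gauge transformation and then invoke Majima's asymptotic existence theorems (\cf\ the Appendix of \cite{Bibi93}) on a polysector in both $t$ and $\hb$. The paper likewise argues by induction, singling out a subset $J_0\subset J_{\max}$ with a common maximal principal part, and uses the Borel--Ritt exact sequence $0\to\cA_\cZ^{<0}\to\cA_\cZ\to\cA_{\wh\cZ}\to0$ together with \cite[Cor\ptbl A.11]{Bibi93}.

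However, the step you call ``essentially automatic'' — the normalization modulo $\hb$ — is a genuine gap. The conditions $\wh Q_{12}=0$, $\wh Q_{21}=0$ control the Taylor expansion of the gauge matrix in $t$ only; (DEC$^\wedge$) is a formal decomposition in $t$ and says nothing about restriction to $\hb=0$. In particular, flatness in $t$ does not force $Q(\cdot,0)=0$, and conjugating a block-diagonal matrix by $\id+Q$ with $Q(\cdot,0)\neq0$ would in general destroy block-diagonality at $\hb=0$. The paper handles this by first recording that the matrix of $t\partiall_t$ has the form $\bigl(\begin{smallmatrix}A_{11}&\hb A_{12}\\ \hb A_{21}&A_{22}\end{smallmatrix}\bigr)$ (already split at $\hb=0$, reflecting \eqref{eq:decHiggs}), and then appealing to the stronger \cite[Th\ptbl A.12]{Bibi93} to choose the Riccati solutions with the extra factorization $Q_{12}=\hb P_{12}$, $Q_{21}=\hb P_{21}$. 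This additional input is precisely what yields the asserted compatibility with \eqref{eq:decHiggs} and the independence of $(\theta_o,\arghbo)$ modulo $\hb$; without it, your argument does not establish that part of the statement. The rest of your outline (the polysector setting, the reduction to the quasi-linear singularly perturbed systems, and the regularity of the factors $\wt\cR^\sA_\varphi$ once the exponential factors are removed) matches the paper.
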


As above, we use the notation
\[
\wt\cM^\sA_\otahbo=\cA_{\cZ,\otahbo}\otimes_{\cO_{\cX,\oO}}\wt\cM_\oO.
\]

\begin{proof}
Let us denote by $\cA_{\wh\cZ}$ the formal completion $\varprojlim_{k}\cA_\cZ/t^k\cA_\cZ$ (formal series in $t$ with coefficients in $\cA_{\wt\Omega_0}$). By a result of Majima \cite{Majima84} (see also \cite[Prop\ptbl1.1.16, p\ptbl44]{Bibi93}), which is a variant of the lemma of Borel-Ritt, we have an exact sequence
\[
0\to\cA_\cZ^{<0}\to\cA_\cZ\To{T}\cA_{\wh\cZ}\to0.
\]

The proof of the proposition is now a variant of a theorem of Majima, generalizing the theorem of Hukuhara-Turrittin in dimension $\geq1$. We assume $J\neq\emptyset$, otherwise there is nothing to prove. Let us denote by $J_{\max}$ the set of $j\in J$ such that the order of the pole of $\varphi_j$ is maximal. Let $J_0\subset J_{\max}$ be a subset corresponding to a fixed principal part of $\varphi_j$, $j\in J_{\max}$. We argue by induction on the rank of $\wt\cM^\sA_\otahbo$. The inductive assumption applies to free $\cA_{\cZ,\otahbo}[\tm]$-modules equipped with a compatible action of $\partiall_t$: we assume that, when tensored with $\cA_{\wh\cZ,\otoO}$, the module is isomorphic to some elementary model $\wt\cM^{\sA,\el}_{\otoO}$ as in the RHS of (DEC$^\cA_0$) and, when restricted to $\hb=0$, is decomposable.

There exists a basis of $\wt\cM^\sA_\otahbo$ in which the matrix of $t\partiall_t$ takes the form $\begin{smallpmatrix}A_{11}&\hb A_{12}\\ \hb A_{21}&A_{22}\end{smallpmatrix}$ such that $\wh A_{12}=0$, $\wh A_{21}=0$ and $\wh A_{11}$ (\resp $\wh A_{22}$) has diagonal blocs of the kind $t\varphi'_j+\wh B_j(t,\hb)$ with $j\in J_0$ (\resp $j\not\in J_0$), where $\wh B_j$ has entries in $\cA_{\wh\cZ}$ (no pole along $t=0$ or $\hb=0$).

In order to diagonalize this matrix, we have to find matrices $Q_{12}$ and $Q_{21}$ with entries in $\cA_{\cZ,\otahbo}$, such that $\wh Q_{12}=0$ and $\wh Q_{21}=0$, and satisfying
\begin{align*}
\hb t\partial_t Q_{12}&=-\hb A_{12}+(A_{11}Q_{12}-Q_{12}A_{22})+\hb Q_{12}A_{21}Q_{12}\\
\hb t\partial_t Q_{21}&=-\hb A_{21}+(A_{22}Q_{21}-Q_{21}A_{11})+\hb Q_{21}A_{12}Q_{21}.
\end{align*}
The existence of such matrices follows from results of Majima \cite{Majima84} (see \eg \cite[Cor\ptbl A.11]{Bibi93}, with a partial system $\Sigma$ consisting of only one equation and no integrability condition). Moreover, applying \cite[Th\ptbl A.12]{Bibi93}, one can choose $Q_{12}=\hb P_{12}$, $Q_{21}=\hb P_{21}$, where $P_{12},P_{21}$ have entries in $\cA_{\cZ,\otahbo}$ and $\wh P_{12}=0$, $\wh P_{21}=0$.
\end{proof}

\section{Local properties of twistor $\cD$-modules in dimension one}
\setcounter{equation}{0}
Using the results of the previous section, we analyze some properties of twistor $\cD$-modules on a disc $X$. The main result of this section is:

\begin{theoreme}\label{th:orbnilp}
Let $\wt\cT=(\wt\cM,\wt\cM,\wt C)$ be an object of $\wtRTriples(X)$ which is strictly specializable with ramification and exponential twist at $t=0$. Assume that:
\begin{enumerate}
\item
away from $t=0$, $\wt\cT$ is smooth and $\wt C$ takes values in $\cCh{\cX^*|\bS}$ (not only in $\cCc{\cX^*|\bS}$),
\item
the sesquilinear duality $\wt\cS=(\id,\id)$ is Hermitian of weight $0$, in other words, $\wt C^*=\wt C$;
\item
the twistor properties (with polarization) are satisfied at $t=0$, \ie for any $q\geq1$, any $\varphi\in\tqm\CC[\tqm]$, any $\beta$ with $\reel\beta\in(-1,0]$ and any $\ell\in\NN$, $P\gr_\ell^\rM\psi_{t_q}^{\varphi,\beta}\rho_q^+\wt\cT(\ell/2)$, with sesquilinear duality $(\id,\id)$, is a polarized pure twistor of weight $0$.
\end{enumerate}
Then the minimal extension $\wt\cT_{\min_t}$ (\cf Definition \ref{def:mint}), equipped with the sesquilinear duality $\cS=(\id,\id)$, is a polarized pure twistor $\cD$-module of weight $0$ on some neighbourhood of~$0$ in~$X$.
\end{theoreme}

In particular, in some punctured neighbourhood $X^*$ of $t=0$, the restriction $\cT_{|X^*}$ is a smooth polarized pure twistor structure of weight $0$. This will prove that the twistor property is open. This is an analogue, in the ``easy'' direction, of the nilpotent orbit theorem in Hodge theory. A similar result, in a more specific situation (TERP structures) has been obtained by Hertling and Sevenheck in \cite[Th\ptbl9.3(2)]{H-S06}.

In \S\ref{subsec:reducramif}, we show how to reduce to the case where \emph{no ramification is needed} (\ie we replace $\wt\cT$ with $\rho_q^+\cT$ for a suitable $q$). Then, by assumption, the twistor properties are satisfied at $t=0$ and the proof consists in constructing, starting from bases of the $P\gr_\ell^\rM\psi_t^{\varphi,\beta}\wt\cM$ which are orthonormal for $P\gr_\ell^\rM\psi_t^{\varphi,\beta}\wt C$, a global frame of $\cM_{|X^*}$ which is orthonormal with respect to $\wt C_{|X^*}$. This is obtained in Corollary \ref{cor:orthonorm}. This will prove that the restriction of $(\wt\cT,\wt\cS)$ to $X^*$ is a smooth twistor structure of weight $0$, corresponding to a flat bundle with harmonic metric, so that the twistor property is satisfied for $(\wt\cT_{\min_t},(\id,\id))$ in some neighbourhood of $t=0$.

In \S\ref{subsec:basis}, we use the decomposition (DEC$^\wedge$) to construct a global basis of $\wt\cM_{|\cX^*}$ with a controlled behaviour when $t\to0$. We will assume that no ramification is needed to get the formal decomposition (this is not a restriction, as the assumptions in the theorem can be lifted to the ramified object). On the other hand, we will need neither the decomposition (DEC$^\sA$) nor the decomposition (DEC$^\sA_0$). The former will be used in \S\ref{subsec:asympt} to give an asymptotic expansion \ref{eq:devasymptC} for the sesquilinear pairing.

\subsection{Ramification}\label{subsec:reducramif}
Let us assume that Theorem \ref{th:orbnilp} is proved when no ramification is needed, that is, when we can set $q=1$ in Proposition \ref{prop:constance}, and let us show how to deduce it in general. Let $\wt\cT$ be an object of $\wtRTriples(X)$ satisfying the assumptions in Theorem \ref{th:orbnilp} and let $\rho_q:X_q\to X$ be the ramification $t_q\mto t=t_q^q$. According to the identification made in Remark~\ref{rem:ramif}, $\rho_q^+\wt\cT$ satisfies the same assumptions. If $q$ is chosen as in Proposition \ref{prop:constance}, we can apply Theorem \ref{th:orbnilp} to $\rho_q^+\wt\cT$ and deduce that $(\rho_q^+\wt\cT)_{\min_{t_q}}$ is a polarized twistor $\cD$-module of weight $0$ on some neighbourhood of $0$ in $X_q$. In particular, it is so on $X_q\moins\{0\}$ (up~to shrinking~$X_q$), and it is then clear that $\wt\cT_{\min_t}$ is so on $X\moins\{0\}$ (up~to shrinking~$X$). By assumption, the polarized twistor property is satisfied at $t=0$ for $\wt\cT_{\min_t}$, proving thus the assertion of Theorem \ref{th:orbnilp} for $\wt\cT_{\min_t}$.

From now on, we assume that no ramification is needed.

\enlargethispage{\baselineskip}%
\subsection{Construction of local bases}
\label{subsec:basis}
We denote by $B$ the subset of $\{\beta\in\CC\mid\reel\beta\in(-1,0]\}$ consisting of $\beta$'s such that some $\psi_t^{\varphi,\beta}\wt\cM\neq\nobreak0$. By assumption, for any $\varphi\in\tm\CC[\tm]$, any $\beta\in B$ and any $\ell\in\NN$, the $\cO_{\Omega_0}$-module $P\gr_\ell^{\rM}\psi_t^{\varphi,\beta}\wt\cM$ is free of finite rank. Moreover, for any $\varphi$ in $\tm\CC[\tm]$ and $\beta\in B$, we can find a global basis $\bme^o_{\varphi,\beta,\ell}$ of $P\gr_\ell^{\rM}\psi_t^{\varphi,\beta}\wt\cM$ which is orthonormal with respect to the specialized sesquilinear form $P\gr_\ell^{\rM}\psi_t^{\varphi,\beta}C(\ell/2)$.

The family $(\bme^o_{\varphi,\beta,\ell})_{\beta\in B,\, \ell\in\NN}$ generates a basis $\bme^o_{\varphi,\beta,\ell,\ell-2k}=(-\rN)^k\bme^o_{\varphi,\beta,\ell}$ ($k\in\NN$) of $\gr_\bullet^{\rM}\psi_t^{\varphi,\beta}\wt\cM$. We will denote by $\rY_{\varphi,\beta}$ the matrix of $-\rN$ in this basis, and by $\rH_{\varphi,\beta}$ the matrix defined by $\rH_{\varphi,\beta}=w\id$ on $\gr_w^{\rM}\psi_t^{\varphi,\beta}\wt\cM$. Lastly, for any $w$ and any $j\in\ZZ$, we set $\bme^o_{\varphi,\beta+j,\ell,w}=t^j\bme^o_{\varphi,\beta,\ell,w}$.

For any $\hbo\in\Omega_0$ and $\beta\in B$, we denote by $q_{\beta,\hbo}\in\ZZ$ the integer such that $\ell_\hbo(\beta+\nobreak q_{\beta,\hbo})\in[0,1)$, and we will also consider the set $B_\hbo=\{\beta+\nobreak q_{\beta,\hbo}\mid\beta\in B\}$. We use the partial order on $B_\hbo$ coming from the order on $\ell_\hbo(\beta)$'s.
We denote by $\bme^{o,\phbo}$ the basis generated by $(\bme^o_{\varphi,\beta,\ell})_{\beta\in B_\hbo,\, \ell\in\NN}$.

\subsubsection{Construction of a formal basis}\label{subsec:formalbasis}
We say that a $\cO_{\wh\cX,\ohbo}[\tm]$-basis $\wh\bme^\phbo$ of $\wt\cM^\wedge_\ohbo$ is \emph{admissible} with respect to $\bme^o$ if $\wh\bme^\phbo=(\wh\bme^\phbo_{\varphi,\beta,\ell,w})$ with $\beta\in B_{\hbo}$, $\ell\in\NN$ and moreover $w\in\ZZ\cap[-\ell,\ell]$ and:
\begin{itemize}
\item
for any $\varphi$, $\wh\bme^\phbo_{\varphi,\beta,\ell,w}\in(\cE^{\varphi/\hb}\otimes \wt\cR^\wedge_\varphi)_\ohbo$ and, if $\hbo=0$, its restriction at $\hb=0$ belongs to $(\cE^{\varphi/\hb}_{|\hb=0}\otimes \wt R_\varphi)_\oO$,
\item
$\wh\bme^\phbo_{\varphi,\beta,\ell,\ell}$ induces $\bme^{o,\phbo}_{\varphi,\beta,\ell,\ell}$ on $P\gr_\ell^{\rM}\psi_t^{\varphi,\beta}\cM_\ohbo$,
\item
for any $k=0,\dots,\ell$, $\wh\bme^\phbo_{\varphi,\beta,\ell,\ell-2k}=[t\partiall_t-\beta\star\hb-t\varphi']^k\cdot\wh\bme^\phbo_{\varphi,\beta,\ell,\ell}$.
\end{itemize}

Let us be more precise on the word ``induce'': it means that the class of $\bme^\phbo_{\varphi,\beta,\ell,\ell}$ modulo $V^{>\ell_\hbo(\beta)}\wt\cM_\ohbo$ has a component on $\psi_t^{\varphi,\beta}\wt\cM_\ohbo$ only (and a zero component on any $\psi_t^{\varphi,\gamma}\wt\cM_\ohbo$ with $\gamma\neq\beta$ and $\ell_\hbo(\gamma)=\ell_\hbo(\beta)$); moreover this class belongs to $\rM_\ell\psi_t^{\varphi,\beta}\wt\cM_\ohbo$ and is primitive modulo $\rM_{\ell-1}\psi_t^{\varphi,\beta}\wt\cM_\ohbo$.

Below, we will not indicate the index $\ell$, which only refers to the weight of the primitive element $\wh\bme^\phbo_{\varphi,\beta,\ell,w}$ comes from.

\begin{lemme}\label{lem:baseadmform}
The matrix of $t\partiall_t$ in any admissible formal basis $\wh\bme^\phbo$ takes the following form:
\begin{equation}\tag*{(\protect\ref{lem:baseadmform})($*$)}\label{eq:baseadmform}
\tbigoplus_{\varphi}\Big[\tbigoplus_{\beta\in B_\hbo}\Big([\beta\star\hb+t\varphi']\id+\rY_{\varphi,\beta}\Big)+\wh P_\varphi(t,\hb)\Big],
\end{equation}
where $\wh P_\varphi$ satisfies:
\begin{itemize}
\item
if $\beta\neq\gamma$,
\begin{itemize}
\item
if $\ell_{\hb_o}(\gamma)\leq \ell_{\hb_o}(\beta)$, then $\wh P_{\varphi,\gamma,\beta}\in t\cO_{\wh\cX,\ohbo}$;
\item
if $\ell_{\hb_o}(\gamma)> \ell_{\hb_o}(\beta)$, then $\wh P_{\varphi,\gamma,\beta}\in\cO_{\wh\cX,\ohbo}$;
\end{itemize}
\item
if $\beta=\gamma$,
\begin{itemize}
\item
if $w'\geq w-2$, then $\wh P_{\varphi,\beta;w',w}\in t\cO_{\wh\cX,\ohbo}$;
\item
if $w'\leq w-3$, then $\wh P_{\varphi,\beta;w',w}\in\cO_{\wh\cX,\ohbo}$.
\end{itemize}
\end{itemize}
\end{lemme}

The notation $\wh P_{\varphi,\gamma,\beta}$ (\resp $\wh P_{\varphi,\beta;w',w}$) is chosen such that
\[
t\partiall_t\wh\bme_{\varphi,\beta}^\phbo=\wh\bme_{\varphi,\beta}^\phbo\cdot \Big([\beta\star\hb+t\varphi']\id+\rY_{\varphi,\beta}\Big)
+ \sum_\gamma\wh\bme_{\varphi,\gamma}^\phbo\cdot
\wh P_{\varphi,\gamma,\beta}
\]
and
\[
\Big(\wh\bme_{\varphi,\beta}^\phbo\cdot\wh P_{\varphi,\beta,\beta}\Big)_w=\sum_{w'}\wh\bme_{\varphi,\beta,w'}^\phbo\cdot\wh P_{\varphi,\beta;w',w}
\]

\begin{proof}
This is a direct consequence of the definition of admissibility.
\end{proof}

By construction, we also have:
\begin{lemme}\label{lem:bmeowedge}
Let $\wh\bme^\phbo_1,\wh\bme^\phbo_2$ be two local admissible formal bases (with respect to~$\bme^o$). Then we have a relation $\wh\bme^\phbo_2=\wh\bme^\phbo_1\cdot(\id+\wh Q(t,\hb))$, where $\wh Q$ satisfies
\begin{itemize}
\item
if $\psi\neq\varphi$, $\wh Q_{\varphi,\psi}=0$,
\item
if $\gamma\neq\beta$, $\wh Q_{\varphi,\gamma,\beta}(0,\hb)\not\equiv0\implique \ell_\hbo(\gamma)> \ell_\hbo(\beta)$,
\item
$\wh Q_{\varphi,\beta,\beta}(0,\hb)$ has weight $\leq-1$ with respect to $\rH_{\varphi,\beta}$,
\item
$\wh Q(t,0)$ has entries in $\cO_{X,0}$ (with $\wh Q(t,0)_{\varphi,\psi}=0$ if $\psi\neq\varphi$).\qed
\end{itemize}
\end{lemme}

\subsubsection{Admissible local holomorphic bases}\label{subsec:localholbasis}
We keep the notation as above. According to \cite[Prop\ptbl2.1]{Malgrange04}, given any basis $\wh\bme^\phbo$ of $\wt\cM_\ohbo^\wedge$, the $\cO_{\cX,\ohbo}$-module $(\cO_{\wh\cX,\ohbo}\cdot\wh\bme^\phbo)\cap\wt\cM_\ohbo$ is free of finite rank. Given any holomorphic basis $\bmm^\phbo$ of it, we consider the invertible matrix $\wh \ccP$ with entries in $\cO_{\wh\cX,\ohbo}$ such that $\bmm^\phbo=\wh\bme^\phbo\cdot \wh \ccP$. We can assume that, if $\hbo=0$, this basis is a lifting of the restriction of $\wh\bme^{(0)}$ to $\hb=0$. If $\ccP=\ccP_0+t\ccP_1+\cdots$, then $\ccP_0$ is invertible. Replacing $\bmm^\phbo$ with $\bmm^\phbo\cdot \ccP_0^{-1}$, we can assume that $\ccP_0=\id$. Similarly, replacing then $\bmm^\phbo$ with $\bmm^\phbo\cdot (\id-t\ccP_1)$, we can assume that $\ccP_1=0$. More generally, given any fixed integer $k$, we can assume that $\ccP_j=0$ for any $j\leq k$. Lastly, we can assume that the restriction at $\hb=0$ is equal to the identity matrix.

We say that a basis $\bme^\phbo$ is \emph{admissible} with respect to $\bme^o$ if
\begin{itemize}
\item
it comes from an admissible formal basis after a base change $(\id+\hb t^k\wh Q(t,\hb))$ where $k$ is strictly bigger than the maximal order of the pole of the $\varphi_j$'s and $\wh Q$ has entries in $\cO_{\wh\cX,\ohbo}$.
\end{itemize}
We can decompose such a basis into subfamilies $\bme^\phbo_{\varphi,\beta}$, $\bme^\phbo_{\varphi,\beta,w}$, $\bme^\phbo_{\varphi,\beta,\ell,w}$, and we also assume that
\begin{itemize}
\item
for any $k=0,\dots,\ell$, $\bme^\phbo_{\varphi,\beta,\ell,\ell-2k}=[t\partiall_t-\beta\star\hb-t\varphi']^k\cdot\bme^\phbo_{\varphi,\beta,\ell,\ell}$.
\end{itemize}
(This can be achieved starting from a basis satisfying the first point only, by replacing $\bme^\phbo_{\varphi,\beta,\ell,\ell-2k}$ with $[t\partiall_t-\beta\star\hb-t\varphi']^k\cdot\bme^\phbo_{\varphi,\beta,\ell,\ell}$, without breaking the first point, as the formal basis satisfies the same property.)

In any admissible local holomorphic basis $\bme^\phbo$, the matrix of $t\partiall_t$ takes the form
\begin{equation}\label{eq:baseadm}
\tbigoplus_{\varphi}\Big[\tbigoplus_{\beta\in B_\hbo}\Big([\beta\star\hb+t\varphi']\id+\rY_{\varphi,\beta}\Big)+ P_\varphi(t,\hb)\Big]\oplus\tbigoplus_{\varphi\neq\psi}P_{\varphi,\psi},
\end{equation}
with $P_\varphi$ (instead of $\wh P_\varphi$) satisfying the properties given in Lemma \ref{lem:baseadmform}, with $\cO_{\cX,\ohbo}$ instead of $\cO_{\wh\cX,\ohbo}$ and, for $\psi\neq\varphi$, $P_{\varphi,\psi}$ has entries in $\hb t^\ell\cO_{\cX,\ohbo}$ for some $\ell\geq1$. Moreover, given any such $\ell$, one can find an admissible local holomorphic basis such that, for any $\varphi\neq\psi$, $P_{\varphi,\psi}$ has entries in $\hb t^\ell\cO_{\cX,\ohbo}$.

\begin{remarque}\label{rem:Vfil}
Any element of $\bme^\phbo_{0,\beta}$ is a local section of $V_\phbo^{\ell_\hbo(\beta)}\wt\cM_\ohbo$ and its class in $\gr_{V_\phbo}^{\ell_\hbo(\beta)}\wt\cM_\ohbo$ is annihilated by a power or $t\partiall_t-\beta\star\hb$. Moreover, the admissible local holomorphic basis $\bme^\phbo$ can be chosen such that, for any $\varphi\neq0$, $\bme^\phbo_{\varphi,\beta}$ is contained in $V_\phbo^{\ell_\hbo(\beta)+\ell}\wt\cM_\hbo$, where $\ell$ is a given arbitrary integer.

\begin{proof}
By definition, a similar result holds at the formal level for $\wh\bme^\phbo_{0,\beta}$, and if $\varphi\neq0$, $\wh\bme^\phbo_{\varphi,\beta}$ is a local section of $V_\phbo^{\ell_\hbo(\beta)+k}\wt\cM^\wedge_\hbo$ for any $k$. For $b\in[0,1)$, let us denote by $\wh\cL^b$ the $\cO_{\wh\cX,\ohbo}$-free module generated by the $\wh\bme^\phbo_{\varphi}$ ($\varphi\neq0$), the $\bme^\phbo_{0,\beta}$ with $\ell_\hbo(\beta)\geq b$, and the $t\bme^\phbo_{0,\beta}$ with $\ell_\hbo(\beta)<b$. Then $V_\phbo^b\wt\cM^\wedge_\hbo=\sum_\ell(t\partiall_t)^\ell\wh\cL^b$.

As indicated at the beginning of \S\ref{subsec:localholbasis}, $\cL^b\defin\wh\cL^b\cap\wt\cM$ is $\cO_{\cX,\ohbo}$-free of finite rank and it follows that $V_\phbo^b\wt\cM_\hbo=\sum_\ell(t\partiall_t)^\ell\cL^b$ (\cf the argument of Remark \ref{rem:formal}\eqref{rem:formal2}). This gives the first point, and the second one follows because $\bme^\phbo_{0,\beta}$ and $\wh\bme^\phbo_{0,\beta}$ have the same class. The last point is then clear by chosing a base change $(\id+\hb t^k\wh Q(t,\hb))$ with $k$ large enough.
\end{proof}
\end{remarque}

\subsubsection{Admissible local $\cA$-bases}\label{subsec:Abasis}
By a variant of Borel-Ritt, for any $\hbo\in\Omega_0$ and any $\theta_o\in S^1$, we have a surjective morphism (Taylor expansion)
\[
\cA_{\cY,\othbo}\to\cO_{\wh\cX,\ohbo}\to0,
\]
the kernel of which consists of functions which are infinitely flat along $S^1\times\nb(\hbo)$ near $\theta_o$. Let us set
\[
\wt\cM^\sA_\othbo\defin \cA_{\cY,\othbo}\otimes_{\cO_{\cX,\ohbo}}\wt\cM_\ohbo.
\]
For any $\theta_o\in S^1$, we therefore have a surjective morphism
\[
\wt\cM^\sA_\othbo\to\wt\cM^\wedge_\ohbo\to0.
\]
We say that a $\cA_{\cY,\othbo}$-basis $\bmea^\phbo$ of $\wt\cM^\sA_\othbo$ is \emph{admissible} (with respect to~$\bme^o$) if its Taylor expansion $\wh\bme^\phbo$ at $\theta_o$ is a local admissible formal basis and, if $\hbo=0$, if the restriction of $\bmea^{(0)}$ to $\hb=0$ is the pull-back of a $\cO_{X,0}$-basis of $\wt M_0$.

\begin{lemme}\label{lem:bmeoA}
Two admissible local $\cA$-bases $\bmea^\phbo_1,\bmea^\phbo_2$ are related by a base change $\bmea^\phbo_2\!=\!\bmea^\phbo_1{\cdot}(\id+\QA(t,\hb))$, also written $\bmea^\phbo_{2,\psi}\!=\!\bmea^\phbo_{1,\psi}+\sum_\varphi\!\bmea^\phbo_{1,\varphi}\QA_{\varphi,\psi}(t,\hb))$, where $\QA$ has entries in $\cA_{\cY,\othbo}$ and satisfies
\begin{itemize}
\item
if $\psi\neq\varphi$, then $\QA_{\varphi,\psi}$ is infinitely flat when $t\to0$, \item
if $\gamma\neq\beta$, $\QA_{\varphi,\gamma,\beta}(0,\hb)\not\equiv0\implique \ell_\hbo(\gamma)> \ell_\hbo(\beta)$,
\item
$\QA_{\varphi,\beta,\beta}(0,\hb)$ has weight $\leq-1$ with respect to $\rH_{\varphi,\beta}$,
\item
$\QA(t,0)$ has entries in $\cO_{X,0}$ (in particular, $\QA(t,0)_{\varphi,\psi}=0$ if $\psi\neq\varphi$).
\end{itemize}
\end{lemme}

\begin{proof}
The Taylor expansion at $\theta_o$ of $\QA$ must satisfy the properties of Lemma \ref{lem:bmeowedge}.
\end{proof}

\begin{lemme}\label{lem:Ahol}
Let $\wh\bme^\phbo$ be an admissible formal basis, $\bmea^\phbo$ an admissible local $\cA$-basis with respect to $\wh\bme^\phbo$ at $\othbo$ and $\bme^\phbo$ be an admissible holomorphic basis of order $k\geq1$ with respect to $\wh\bme^\phbo$ at $\ohbo$. Then there exists a matrix $\QA$ with entries in $\cA_{\cY,\othbo}$ such that $\bmea^\phbo=\bme^\phbo\cdot(\id+\hb t^k\QA)$.
\end{lemme}

\begin{proof}
We have a relation $\wh\bme^\phbo=\bme^\phbo\cdot(\id+\hb t^k\wh Q)$ by assumption. Lifting this relation by the Taylor map gives the relation for $\bmea^\phbo$.
\end{proof}

\enlargethispage{\baselineskip}%
\subsubsection{Untwisted admissible local $C^\infty$-bases}\label{subsec:cinfbasis}
For each $\varphi$ and $\beta\in B$, let us set
\begin{equation}\label{eq:A}
A(t,\hb)=\oplus_{\varphi,\beta\in B}A_{\varphi,\beta},\quad A_{\varphi,\beta}(t,\hb)=\mt^{\beta'+i\hb\beta''}\Lt^{\rH_{\varphi,\beta}/2},
\end{equation}
with $\Lt\defin\vert\log\vert t\vert^2\vert$. Given an admissible local $\cA$-basis $\bmea^\phbo$, we set
\begin{equation}\label{eq:untwist}
\varepsilong^\phbo=\bmea^\phbo\cdot A^{-1}(t,\hb).
\end{equation}
We say that $\varepsilong^\phbo$ is an untwisted admissible local $\cCh{\cY^*,\othbo}$-basis. Let us notice that, as $A$ is diagonal with respect to the $(\varphi,\beta)$ decomposition, we also have $\varepsilong^\phbo=(\varepsilong_{\varphi,\beta}^\phbo)_{\varphi,\beta\in B}$ with $\varepsilong_{\varphi,\beta}^\phbo=\bmea^\phbo_{\varphi,\beta}\cdot A_{\varphi,\beta}^{-1}(t,\hb)$.

\begin{lemme}\label{lem:untwist}
For $\hbo\in\Omega_0$ and $\theta_o\in S^1$, if $\varepsilong^\phbo_1$ and $\varepsilong ^\phbo_2$ are two untwisted admissible local $\cCh{\cY^*,\othbo}$-bases of $\wt\cM^\sA_{|\cY^*,\othbo}$, and if we set $\varepsilong^\phbo_2=\varepsilong^\phbo_1\cdot(\id+R(t,\hb))$, then, for any $\delta\in(0,1/2)$, we have, uniformly for $\hb$ in some neighbourhood $\nb(\hbo)$ of~$\hbo$,
\[
\lim_{t\to0}\Lt^\delta \vert R(t,\hb)\vert=0.
\]
Moreover, $R(t,0)_{\varphi,\psi}=0$ if $\psi\neq\varphi$.
\end{lemme}

Let us emphasize that we index the bases $\varepsilong^\phbo_\varphi$ by $\beta\in B$, as they will be globalized, so the indexing set should not depend on~$\hbo$. But, up to a phase factor, we have, for $\beta\in B$, $\varepsilong_{\varphi,\beta}^\phbo=\bmea^\phbo_{\varphi,\beta+q_{\beta,\hbo}}\cdot A_{\varphi,\beta+q_{\beta,\hbo}}^{-1}(t,\hb)$. Therefore, in the proof below, we assume that the index set is $B_\hbo$.

\begin{proof}
Let $\QA$ be as in Lemma \ref{lem:bmeoA}. If $\psi\neq\varphi$, we find that
$A_\varphi\QA_{\varphi,\psi}A_\psi^{-1}$ is infinitely flat along $\mt=0$. Therefore, $R_{\varphi,\psi}(t,\hb)$ is infinitely flat when $t\to0$, uniformly in some neighbourhood of $\hbo$, and $R_{\varphi,\psi}(t,0)=0$ if $\psi\neq\varphi$.

Let us now consider the case where $\varphi=\psi$. If $\gamma\neq\beta$, let us remark that, as $\ell_\hbo(\gamma)$ and $\ell_\hbo(\beta)$ belong to $[0,1)$ by definition, we have \hbox{$1+\ell_\hbo(\gamma)-\ell_\hbo(\beta)>0$} and there exists $\epsilon>0$ and $\nb(\hbo)$ small enough so that, for any $\hb\in\nb(\hbo)$, we have $1+\ell_\hb(\gamma)-\ell_\hb(\beta)>\epsilon$. It follows that there exists $c>0$ such that, for any $\hb\in\nb(\hbo)$,
\[
\vert A_{\varphi,\gamma}(\QA_{\varphi,\gamma,\beta}-\QA_{\varphi,\gamma,\beta}(0,\hb))A_{\varphi,\beta}^{-1}\vert\leq c\mt^{\epsilon}.
\]
By the second point in Lemma \ref{lem:bmeoA}, if $\QA_{\varphi,\gamma,\beta}(0,\hb)\neq\nobreak0$, we also have such an estimate for $\vert A_{\varphi,\gamma}\cdot\QA_{\varphi,\gamma,\beta}(0,\hb)\cdot A_{\varphi,\beta}^{-1}\vert$.

When $\gamma=\beta$, we only have to estimate $\vert A_{\varphi,\beta}{\cdot}\QA_{\varphi,\beta,\beta}(0,\hb){\cdot} A_{\varphi,\beta}^{-1}\vert$ or, what amounts to the same, $\vert \Lt^{\rH_{\varphi,\beta}/2}\cdot \QA_{\varphi,\beta,\beta}(0,\hb) \cdot\Lt^{-\rH_{\varphi,\beta}/2}\vert$. By the third point in Lemma~\ref{lem:bmeoA}, this is smaller than $c\cdot\Lt^{-1/2}$.
\end{proof}

Recall that $\Delta_0=\{\hb\mid \vert\hb\vert\leq1\}$. We will now globalize the construction above.

\begin{lemme}[Globalization of untwisted local bases]\label{lem:glob}
For any $\delta\in(0,1/2)$, there exists an open neighbourhood $\nb(\Delta_0)$ of $\Delta_0$ and a basis $\varepsilong$ of $\cCch{X^*\times\nb(\Delta_0)}\otimes_{\cO_{X^*\times\nb(\Delta_0)}}\wt\cM_{|X^*\times\nb(\Delta_0)}$ satisfying the following property:
\begin{itemize}
\item
if $\varepsilong^\phbo$ is any admissible untwisted local basis, then the base change $\varepsilong=\varepsilong^\phbo\cdot(\id+R^\phbo(t,\hb))$ satisfies
\[
\lim_{t\to0}\Lt^\delta\vert R^\phbo(t,\hb)\vert=0
\]
uniformly for $\hb\in\nb(\hbo)$.
\end{itemize}
\end{lemme}

Of course, a basis $\varepsilong$ constructed starting from some $\delta\in(0,1/2)$ is convenient for any $\delta'\in(0,\delta]$.

\begin{proof}
We first use a partition of the unity with respect to $\theta\in S^1$ to globalize with respect to $\theta$ (and constant with respect to $\hb$). By Lemma \ref{lem:untwist}, the base change from the $\theta$-global frame to the local frame satisfies the desired property. The globalization with respect to $\hb$ is then similar to that of \cite[Lemma~5.4.6]{Bibi01c}.
\end{proof}

\subsection{Asymptotic expansion of sesquilinear pairings}\label{subsec:asympt}

In this paragraph, we wish to generalize Lemma~5.3.12 of \cite{Bibi01c} to objects of $\wtRTriples(X)$ which are strictly specializable with ramification and exponential twist. Although the result will not be strong enough for our purpose, it gives a first taste of the kind of expansions one can obtain.

In the following, we assume that $\hbo\in\bS$. Let $\wt\cT=(\wt\cM',\wt\cM'',\wt C)$ be an object of $\wtRTriples(X)$ which is strictly specializable with ramification and exponential twist. We will assume that no ramification is needed for $\wt\cM'$ and $\wt\cM''$, so that we can apply Proposition \ref{prop:decompA}.

For any $\varphi\in\tm\CC[\tm]$, denote by $\ephi$ the section~$1$ of $\cE^{-\varphi/\hb}$. For any $m\in\wt\cM_\ohbo$, we denote by $\Phi(m)\subset\tm\CC[\tm]$ the set of $\varphi$ such that $\psi_t^\gamma(\cE^{-\varphi/\hb}\otimes\nobreak\wt\cN)_\ohbo\neq\nobreak0$ for some $\gamma$, where $\wt\cN_\ohbo$ is the submodule generated by $m$ in $\wt\cM_\ohbo$ (by assumption and Lemma \ref{lem:submodules}, $\Phi(m)$ does not depend on~$\hbo$). We then denote by $L_{\varphi,\gamma}(m)$ the nilpotency order of $\rN=-(t\partiall_t-\gamma\star\hb)$ on $\psi_t^\gamma(\cE^{-\varphi/\hb}\otimes\wt\cN)_\ohbo$. For any $\varphi\in\Phi(m)$, there exists a minimal finite set $B_\varphi(m)$ such that, for any $j\in\NN$ there exists an integer $k(j)$ and an operator $P_j\in V_0(\cR_{\cX,\ohbo})$ such that
\begin{equation}\label{eq:Bernsteinj}
\bigg[\prod_{k=0}^{k(j)}\prod_{\beta\in B_\varphi(m)}\big[-(t\partiall_t-(\beta+k)\star\hb)\big]^{L_{\varphi,\beta}(m)}-t^jP_j\bigg]\cdot (\ephi\otimes m)=0.
\end{equation}
We also define $A_\varphi(m)=\{\alpha\mid\exists\beta\in B_\varphi(m),\,\alpha=-\beta-1\}$.

\begin{exemple}\label{exem:localholbasis}
Let $\bme^\phbo$ be an admissible local holomorphic basis as in \S\ref{subsec:localholbasis}. From Remark \ref{rem:Vfil} we deduce that, for any $\varphi$ and any $\beta\in B_\hbo$, $B_\varphi(\bme^\phbo_{\varphi,\beta})$ consists of $\beta$ and of complex numbers~$\gamma$ such that $\ell_\hbo(\gamma)>\ell_\hbo(\beta)$. Moreover, if $\psi\neq\varphi$, $\gamma\in B_\psi(\bme^\phbo_{\varphi,\beta})$ implies that $\ell_\hbo(\gamma)>\ell_\hbo(\beta)$, and, given any positive number $k$, the basis can be chosen such that the difference $\ell_\hbo(\gamma)-\ell_\hbo(\beta)$ can be made larger than~$k$.
\end{exemple}

For $m'\in\wt\cM'_\ohbo$ and $m''\in\wt\cM''_\ohbo$, we define \begin{align*}
\Phi(m',m'')&=\Phi(m')\cap\Phi(m''),
\\
B_\varphi(m',m'')&=\Big[(B_\varphi(m')-\NN)\cap B_\varphi(m'')\Big]\cup\Big[B_\varphi(m')\cap (B_\varphi(m'')-\NN)\Big]
\\
L_{\varphi,\beta}(m',m'')&=\min \{L_{\varphi,\beta}(m'),L_{\varphi,\beta}(m'')\}.
\end{align*}

Lastly, if $f\in \cCc{\cX,\ohbo}$, one can expand $f$ with respect to $t,\ovt$ and one can associate to this expansion a minimal set $E(f)\subset\NN^2$ such that $f=\sum_{(\nu',\nu'')\in E(f)}t^{\nu'}\ovt^{\nu''}f_{(\nu',\nu'')}$ with $f_{(\nu',\nu'')}\in \cCc{\cX,\ohbo}$. By convention, $E(f)=\emptyset$ if $f\equiv0$.

\begin{proposition}\label{prop:devasymptC}
With these assumptions and notation, let $m'\in\wt\cM'_\ohbo$ and $m''\in\wt\cM''_\mohbo$. For any $\varphi\in\Phi(m',m'')=:\Phi$, any $\beta\in B_\varphi(m',m'')=:B_\varphi$ and any $\ell=0,\dots,L_{\varphi,\beta}(m',m'')-1=:L_{\varphi,\beta}-1$, there exists $f_{\varphi,\beta,\ell}\in\cCc{\cX,\ohbo}$ and $N\in\NN$ such that, in $\Dbh{\cX}[\tm]_\ohbo$ and hence in $\cCc{}(U^*\times(\nb(\hbo)\cap \bS))$ for $U$ and $\nb(\hbo)\cap \bS$ small enough,
\begin{multline}\tag*{(\protect\ref{prop:devasymptC})($*$)}\label{eq:devasymptC}
(\hb+1/\hb)^N \wt C(m',\ovv{m''})_\ohbo\\[-3pt]
=\sum_{\varphi\in\Phi}\sum_{\beta\in B_\varphi}\sum_{\ell=0}^{L_{\varphi,\beta}-1}f_{\varphi,\beta,\ell}(t,\hb)\,e^{-\hb\ov\varphi+\varphi/\hb}\,\mt^{2\beta\star\hb/\hb}\frac{\Lt^\ell}{\ell!}.
\end{multline}
Moreover, if $f_{\varphi,\beta,\ell}\neq0$ and if the point $(k',k'')\in\NN^2$ belongs to $E(f_{\varphi,\beta,\ell})$, then $\beta+k'\in B_\varphi(m')+\NN$ and $\beta+k''\in B_\varphi(m'')+\NN$.
\end{proposition}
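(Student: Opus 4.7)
My proof plan adapts the Mellin--Bernstein argument of Lemma~5.3.12 in~\cite{Bibi01c} (the regular case, $\varphi=0$) to the present irregular setting, treating each exponential factor $\varphi\in\Phi(m',m'')$ separately via the exponential twist. The crucial starting observation is that, since $\hbo\in\bS$ and $1/\hb=\ov\hb$ on~$\bS$, the function $e^{\hb\ov\varphi-\varphi/\hb}=e^{2i\im(\hb\ov\varphi)}$ has modulus~$1$ pointwise, and hence acts as a $C^\infty$ multiplier on the sheaf $\Dbh{X}[\tm]$ even though its phase has a pole along $\{t=0\}$. This allows a $\varphi$-by-$\varphi$ analysis of $\wt C(m',\ovv{m''})$ inside the class of distributions with moderate growth, and relates $\wt C(m',\ovv{m''})$ to the twisted pairing $\wt C_{-\varphi}(\ephi\otimes m',\ovv{\ephi\otimes m''})$.

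For each $\varphi\in\Phi(m',m'')$ and a test function $\chi$ with compact support, identically $1$ near $t=0$, I would set
\[
I_\varphi(s,\hb)=\Big\langle \mt^{2s}e^{\hb\ov\varphi-\varphi/\hb}\wt C(m',\ovv{m''}),\,\chi(t)\itwopi\frac{dt\wedge d\ov t}{t\ov t}\Big\rangle,
\]
holomorphic in $s$ for $\reel s\gg0$ and in $\hb\in\nb(\hbo)\cap\bS$. Using the sesquilinearity of $\wt C$ (so that, after multiplication by $\mt^{2s}$ and integration by parts, the action of $t\partiall_t$ on the first factor becomes essentially multiplication by $s\hb$, and symmetrically for the $\ov t$-variable on the second factor), together with the two Bernstein equations \eqref{eq:Bernsteinj} applied to $\ephi\otimes m'$ and to $\ephi\otimes m''$, I obtain a functional equation
\[
b_\varphi(s,\hb)\,I_\varphi(s,\hb)=I_\varphi(s+1,\hb;\tilde\chi),
\]
where $b_\varphi(s,\hb)$ has roots (with multiplicities dictated by $L_{\varphi,\beta}(m',m'')$) of the form $s\hb+(\beta+k)\star\hb=0$ for $\beta\in B_\varphi(m',m'')$ and $k\in\ZZ$. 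The symmetric roles of $B_\varphi(m')$ and $B_\varphi(m'')$ in the definition of $B_\varphi(m',m'')$ reflect that the relation comes from two Bernstein equations, one on each factor of the pairing.

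Iterating this functional equation meromorphically extends $I_\varphi(s,\hb)$ to the whole $s$-plane, with poles at $s=(\beta+k)\star\hb/\hb$ for $\beta\in B_\varphi(m',m'')$, $k\in\NN$, of order at most $2L_{\varphi,\beta}(m',m'')$. The residues at $s=\beta\star\hb/\hb$ encode, via the standard dictionary between poles of Mellin transforms and asymptotic expansions in $\mt^{2\beta\star\hb/\hb}(\Lt^\ell/\ell!)$, the coefficients $f_{\varphi,\beta,\ell}(t,\hb)$ in the expansion of $e^{\hb\ov\varphi-\varphi/\hb}\wt C(m',\ovv{m''})$. The prefactor $(\hb+1/\hb)^N$ in \ref{eq:devasymptC} comes from clearing denominators that appear when separating the residues at distinct roots of $b_\varphi$ that merge precisely at $\hb=\pm i$: two values $\beta_1\star\hb$ and $\beta_2\star\hb$ with $\reel\beta_1=\reel\beta_2$ and $\im\beta_1\neq\im\beta_2$ coincide exactly when $\hb^2+1=0$, and since $\hb+1/\hb=(\hb^2+1)/\hb$, a power of this factor suffices. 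The support constraint $\beta+k'\in B_\varphi(m')+\NN$ and $\beta+k''\in B_\varphi(m'')+\NN$ for $(k',k'')\in E(f_{\varphi,\beta,\ell})$ follows by tracking how the shift $s\mapsto s+1$ (corresponding to the factor $\mt^2=t\ov t$) interacts with the $m'$- and $m''$-sides of the two Bernstein equations independently.

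The main obstacle is the assembly step: proving that the Mellin residues of $I_\varphi(s,\hb)$ capture \emph{only} the $\psi=\varphi$ component of~\ref{eq:devasymptC} and not the cross terms with $\psi\neq\varphi$. Indeed, a putative cross term $f_{\psi,\beta,\ell}\,e^{-\hb\ov\psi+\psi/\hb}\mt^{2\beta\star\hb/\hb}(\Lt^\ell/\ell!)$ contributes to $I_\varphi$ an integral carrying the rapidly oscillating factor $e^{2i\im(\hb\ov{(\varphi-\psi)})}$, whose phase has a pole of positive order at $t=0$. Repeated integration by parts against this oscillation (the classical stationary-phase estimate) shows the resulting integral is entire in~$s$, hence produces no residues. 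The delicate point, which must be handled with care since we work with distributions rather than with functions, is uniform control of the error terms in this oscillatory integration by parts as $\hb$ varies in $\nb(\hbo)\cap\bS$; this is the technical heart of the argument, generalizing the corresponding estimate in Lemma~5.3.12 of~\cite{Bibi01c} where only $\varphi=0$ appears.
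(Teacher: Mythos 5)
Your proposal conflates what the Mellin transform argument can and cannot do, and as a result omits the genuine heart of the proof. The paper's proof has two separate parts: in \S\ref{subsec:proofcomputing} it \emph{assumes} that an expansion of the form \ref{eq:devasymptC} exists and then uses the Mellin--Bernstein functional equation (essentially the argument you outline, including the observation that the $\psi\neq\varphi$ terms give an entire Mellin transform) to determine the index sets $B_\varphi(m',m'')$ and the orders $L_{\varphi,\beta}(m',m'')$; then in \S\ref{proof:devasymptC2} and \S\ref{proof:devasymptC1} it \emph{proves} the existence of such an expansion. That existence is the hard part, and you have not supplied it. Poles of the Mellin transform $I_\varphi(s,\hb)$ give information only about the distribution modulo those with entire Mellin transform; from residues you cannot reconstruct $C^0$ (or $C^\infty$) coefficients $f_{\varphi,\beta,\ell}(t,\hb)$. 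The dictionary ``poles $\leftrightarrow$ asymptotic expansion'' requires knowing a priori that the distribution belongs to a Nilsson-type class with exponential factors $e^{-\hb\ov\varphi+\varphi/\hb}$; proving precisely that is the content of the proposition, so the argument is circular.

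The paper establishes existence by a sectorial argument on the real blow-up: using the Hukuhara--Turrittin decomposition with parameter (DEC$^\sA$) from Proposition~\ref{prop:decompA} together with Lemma~\ref{lem:localreginv}, one writes $(\hb+1/\hb)^N m'$, $(\hb+1/\hb)^N m''$ as $\cA_\cY$-combinations of sections $\mu'$, $\mu''$ annihilated (over $\cA_\cY$) by simple operators of the form $(t\partiall_t-\beta\star\hb-t\varphi')^\ell$. Since on a sector $\wt C^\sA(\mu',\ovv{\mu''})$ is then an honest $C^\infty$ function killed by $(t\partiall_t)^\ell$ and $\ovv{t\partiall_t}^n$ after untwisting, it is forced into an explicit elementary form; cross terms with $\varphi\neq\psi$ are then either identically zero or infinitely flat by moderate-growth considerations, and the sectorial expansions are glued via the partition of unity and the Borel--Ritt type Lemma~\ref{lem:devasymptY}. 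This is also where the factor $(\hb+1/\hb)^N$ actually originates (from the denominators in Lemma~\ref{lem:localreginv}), not, as you suggest, from separating Mellin poles that collide at $\hb=\pm i$. Your oscillatory-phase treatment of the cross terms is fine for showing their Mellin contribution is entire, and your functional equation is the right tool for the index sets, but without the sectorial structure theorem the coefficients $f_{\varphi,\beta,\ell}$ are never constructed and the expansion itself is never shown to exist.
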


\begin{remarques}\label{rem:devasymptC}\mbox{}
\begin{enumerate}
\item\label{rem:devasymptC1}
As a consequence of the last part of the proposition, we can also write \ref{eq:devasymptC} as a finite sum with terms
\[
g_{\varphi,\beta,\ell}^{(k',k'')}(t,\hb)\,e^{-\hb\ov\varphi+\varphi/\hb}\,t^{k'}\ovt^{k''}\,\mt^{\beta'+i\beta''\hb}\mt^{\beta'+i\beta''/\hb}\frac{\Lt^\ell}{\ell!},
\]
where $k',k''\in\NN$, $\beta+k'$ runs in $B_\varphi(m')+\NN$ and $\beta+k''$ in $B_\varphi(m'')+\NN$, and $g_{\varphi,\beta,\ell}^{(k',k'')}(t,\hb)\in\cCc{\cX,\ohbo}$ is such that $g_{\varphi,\beta,\ell}^{(k',k'')}(0,\hb)\neq0$. (Recall that we denote by~$\beta'$ (\resp $\beta''$) the real part (\resp the imaginary part) of $\beta$.) Let us also notice that
\begin{align*}
\reel(\beta'+k'+i\beta''\hb)&=\ell_\hb(\beta+k')\quad\text{(by definition)},\\
\reel(\beta'+k''+i\beta''/\hb)&=\ell_{-\hb}(\beta+k'')\quad\text{(as $\hb\in\bS$).}
\end{align*}

\item\label{rem:devasymptC2}
We know that, when restricted to $X^*=\{t\neq0\}$, $\wt C$ takes values in $\cCc{\cX^*|\bS}$. If we moreover assume that it takes values in $\cCh{\cX^*|\bS}$, it will be clear from the end of the proof below that the $f_{\varphi,\beta,\ell}$ also belong to $\cCh{\cX|\bS}$. Then the functions $f_{\varphi,\beta,\ell}$ (the number of which is finite) can be regarded as $C^\infty$ functions from $X$ into the Banach space $\rH(\nb(\hbo))$ of continuous functions on the closure of a small disc $\nb(\hbo)$ which are holomorphic in its interior.

Similarly, if $\wt C$ takes values in $\cCh{\cX^*|\bS}$, it is holomorphic with respect to $\hb$ in a set like $V\ohbo\cap\cX^*$, where $V\ohbo$ is a neighbourhood of $\ohbo$ in $\cX$. However, the intersection of $V\ohbo$ with $\{t\}\times\Omega_0$ could a~priori have a radius tending to $0$ when $t\to0$. But from \ref{eq:devasymptC} and the previous remark, we note that this cannot happen, as $(\hb+1/\hb)^N\wt C$ is then holomorphic on $X^*\times\nb(\hbo)$. Therefore, $\wt C$ is holomorphic with respect to~$\hb$ on $X^*\times\nb(\hbo)$.
\end{enumerate}
\end{remarques}

\enlargethispage{\baselineskip}%
\begin{proof}[\proofname\ of Proposition \ref{prop:devasymptC}]\mbox{}
\subsubsection{Computing the set of indices}\label{subsec:proofcomputing}
Let us first assume that we have proved the existence of a formula like \ref{eq:devasymptC}, without being precise on the sets of indices. We will now show how to recover the information on these sets. We will use the Mellin transform to treat each coefficient of the expansion \ref{eq:devasymptC}. Let $\chi(t)$ be a $C^\infty$ function on~$X$ with compact support contained in an open set over which $m',m''$ are defined, identically equal to $1$ near $0$. We denote in the same way the form $\chi\tfrac{i}{2\pi}\,dt\wedge d\ovt$. Let us denote by $p$ the order of $\wt C(m',\ovv{m''})$ on the support of $\chi$ (recall that there exists $r\in\NN$ such that $t^r\wt C(m',\ovv{m''})$ is a distribution which is continuous with respect to $\hb\in\bS$, hence has a well-defined order $q$, and we take $p=q+r$). We will first consider the coefficients for which $\varphi=0$. Let us set $\wt v=(\hb+1/\hb)^N\wt C(m',\ovv{m''})$ and $v=t^r\wt v$, which is a local section of $\Dbh{X}$.

For all $k',k''\in\NN$, the function $s\mto\langle v,\mt^{2s}t^{-k'}\ovt^{-k''}\chi\rangle$ is defined and holomorphic on the half plane $2\reel s>q+k'+k''$. Let $Q_j$ be the operator appearing in \eqref{eq:Bernsteinj} when $\varphi=0$. Then $Q_j\cdot v$
is supported at $\{t=0\}$. It follows that, on some half plane $\reel s\gg0$, if we relate $\alpha$ and~$\beta$ by $\alpha=-\beta-1$, the function
\[
\bigg[\prod_{k=0}^{k(j)}\prod_{\alpha\in A_0(m)}\big[\hb(s-k'+k)-\alpha\star\hb\big]^{L_{0,\alpha}(m')}\bigg]\langle v,\mt^{2s}t^{-k'}\ovt^{-k''}\chi\rangle
\]
coincides with a function which is holomorphic on $2\reel s>q+k'+k''-\nobreak j$. By applying the anti-holomorphic argument we find that, for any $k',k''\in\NN$, the function $s\mto\langle v,\mt^{2s}t^{-k'}\ovt^{-k''}\chi\rangle$ extends as a meromorphic function on~$\CC$ with poles contained in the sets $s=\alpha\star\hb/\hb$ with $\alpha\in (A_{0}(m')+k'-\NN)\cap (A_{0}(m'')+k''-\NN)$ and the order along $s=\alpha\star\hb/\hb$ is bounded by $L_{0,\alpha}(m',m'')$. Moreover, this function only depends on~$\wt v$.

Let us now compute the Mellin transform of the expansion \ref{eq:devasymptC} for $\wt v$. Let us first remark that, if $\varphi\neq0$, the Mellin transform of $e^{-\hb\ov\varphi+\varphi/\hb}\mt^{2\beta\star\hb/\hb}\Lt^\ell$ is an entire function: one argues as above, noticing that the term between brackets can be chosen equal to~$1$.

Let us then consider the terms for which $\varphi=0$. It is not restrictive to assume that two distinct elements of the set of indices $B_0$ do not differ by an integer and that any element $\beta$ in $B_0$ is maximal, that is, the set $\bigcup_\ell E(f_{0,\beta,\ell})$ is contained in $\NN^2$ and in no $(m,m)+\NN^2$ with $m\in\NN^*$. Let $\beta\in B_0$. We will use that, for any $\nu',\nu''\in\ZZ$ not both negative and any function $g\in \cCc{\cX,\ohbo}$ such that $g(0,\hb)\not\equiv0$, the meromorphic function $s\mto\langle g(t,\hb)\mt^{2\beta\star\hb/\hb}\Lt^\ell,\mt^{2s}t^{\nu'}\ovt^{\nu''}\chi\rangle$ has poles at most along the sets $s=\alpha\star\hb/\hb-\NN$ (with $\alpha=-\beta-1$), and has a pole along $s=\alpha\star\hb/\hb$ if and only if $\nu'=0$ and $\nu''=0$, this pole having precisely order $\ell+1$.

For $\beta\in B_0$, let $E_\beta\subset\NN^2$ be minimal such that $E_\beta+\NN^2=\bigcup_\ell (E(f_{0,\beta,\ell})+\NN^2)$. From the previous argument we can conclude that, for any $(k',k'')\in E_\beta$, the function $s\mto\langle \wt v,\mt^{2s}t^{-k'}\ovt^{-k''}\chi\rangle$ has a non trivial pole along $s=\alpha\star\hb/\hb$; on the other hand, from the first part of the proof it follows that $\alpha-k'\in A_{0}(m')-\NN$ and $\alpha-k''\in A_{0}(m'')-\NN$, that is $\beta+k'\in B_{0}(m')+\NN$ and $\beta+k''\in B_{0}(m'')+\NN$. As we assume that $\beta$ is maximal, there exists $(k',k'')\in E_\beta$ with $k'=0$ or $k''=0$. It follows that $\beta\in B_{0}(m',m'')+\NN$ and that the condition given in the proposition is satisfied by the elements of $E_\beta$. It is then trivially satisfied by the elements of all the $E(f_{0,\beta,\ell})$.

In order to obtain the result for the $f_{\varphi,\beta,\ell}$, one applies the previous result to the moderate distribution $e^{\hb\ov\varphi-\varphi/\hb}\wt v$.\qed

\subsubsection{\proofname\ of the existence of a formula}\label{proof:devasymptCb}
We now prove the existence of a formula like \ref{eq:devasymptC}, without being precise on the set of indices.

\def\theparagraph{\textup{\thesubsubsection(\arabic{paragraph})}}
\refstepcounter{paragraph}
\subsubsection*{\theparagraph. Sectorial formula}\label{proof:devasymptC1}
We will work on the real blow up $Y=[0,r_0)\times S^1$ of the origin in $X$, with blowing-up map $e:Y\to X$, and on the corresponding space $\cY=Y\times\Omega_0$, as in \S\ref{subsec:sectneq0}. We consider the sheaves $\cA_\cY$, etc.\ as in \S\ref{subsec:sectneq0}.

We can extend in a unique way the pairing $\wt C$ to a pairing $\wt C^\sA:\wt\cM_{|\bS}^{\prime\sA}\otimes_{\cOS}\ovv{\wt\cM_{|\bS}^{\prime\prime\sA}}\to\Dbh{Y}[\tm]$ in a way compatible with the $\cR_\cX$ and the $\cR_{\ovvv\cX}$-action (one uses the local freeness of $\wt\cM',\wt\cM''$ given by Lemma \ref{lem:holstrict}).

We will fix $\theta_o\in S^1$ and $\hbo\in\bS$ and we work with germs at $\othbo$. According to (DEC$^\sA$) in Proposition \ref{prop:decompA} and Lemma \ref{lem:localreginv}, for $N\gg0$, $(\hb+1/\hb)^Nm'$ (\resp $(\hb+\nobreak1/\hb)^Nm''$) is a linear combination with coefficients in $\cA_{\cY,\othbo}$ of sections $\mu'$ (\resp $\mu''$) which satisfy equations of the form
\[
\begin{cases}
(t\partiall_t-\beta\star\hb-t\varphi'(t))^\ell \mu'=0,\\
\ovv{t\partiall_t}\mu'=0
\end{cases}
\qquad
\begin{cases}
(t\partiall_t-\gamma\star\hb-t\psi'(t))^n\mu''=0,\\
\ovv{t\partiall_t}\mu''=0
\end{cases}
\]
On $(Y^*\times\bS)\cap\nb\othbo$, $\wt C^\sA(\mu',\ovv{\mu''})$ takes values in $\cCc{Y^*\times\bS}$ (\resp $\cCh{\cY^*|\bS}$ if we assume that $\wt C$ takes values in $\cCh{\cX^*|\bS}$). It follows that, on such a set, the $C^\infty$ function of $t$ (with a given branch of $\log t$ chosen near $\theta_o$)
\[
t^{-\beta\star\hb/\hb}e^{-\varphi/\hb}\ovv{t^{-\gamma\star\hb/\hb}}\ovv{e^{-\psi/\hb}}\wt C^\sA(\mu',\ovv{\mu''})
\]
is annihilated by $(t\partiall_t)^\ell$ and $\ovv{t\partiall_t}{}^n$, hence is a linear combination, with coefficients depending on $\hb$ only (in a $C^0$ way, or an analytic way if $\wt C$ takes values in $\cCh{\cX^*|\bS}$) of terms $(\log t)^a(\log\ovt)^b$. Hence $\wt C^\sA(\mu',\ovv{\mu''})$ is a linear combination with such coefficients of terms $e^{-\hb\ov\psi+\varphi/\hb}t^{\beta\star\hb/\hb}\ovv{t^{\gamma\star\hb/\hb}}(\log t)^a(\log\ovt)^b$. It will be convenient to write $e^{-\hb\ov\psi+\varphi/\hb}=e^{-\hb\ov\psi+\psi/\hb}e^{(\varphi-\psi)/\hb}$ and to note that, when $\hb\in\bS$, $|e^{-\hb\ov\psi+\psi/\hb}|=1$.

If $\varphi\neq\psi$, then, as $\wt C^\sA(\mu',\ovv{\mu''})$ has moderate growth along $\mt=0$ uniformly in $(Y^*\times\bS)\cap\nb\othbo$, we can assume that $\nb\othbo$ is small enough so that
\begin{itemize}
\item
either $\reel[(\varphi-\psi)/\hb]<0$ all over this neighbourhood and then $\wt C^\sA(\mu',\ovv{\mu''})$ is infinitely flat along $\mt=0$ (locally uniformly with respect to $\hb\in\nb(\hbo)$),
\item
or, whatever the size of $\nb\othbo$ is, $\reel[(\varphi-\psi)/\hb]$ takes positive values on some nonempty open subset of $(Y^*\times\bS)\cap\nb\othbo$ and $\wt C^\sA(\mu',\ovv{\mu''})\equiv0$ on this neighbourhood (the only possibility in order to extend as a temperate distribution all over this neighbourhood).
\end{itemize}

If $\varphi=\psi$, then, as $\gamma\star\hb/\hb$ is ``real'', we have $\ovv{t^{\gamma\star\hb/\hb}}=\ovt^{\gamma\star\hb/\hb}$, and one can rewrite $e^{-\hb\ov\varphi+\varphi/\hb}t^{\beta\star\hb/\hb}\ovv{t^{\gamma\star\hb/\hb}}(\log t)^a(\log\ovt)^b$ as an expansion like \ref{eq:devasymptC}, with the $\varphi$-terms only, and $\wt f_{\varphi,\beta,\ell}\in\cCc{Y\times\bS,\othbo}$ (\resp $\wt f_{\varphi,\beta,\ell}\in\cCh{\cY|\bS,\othbo}$).

Taking a partition of unity with respect to $\theta$, we obtain \ref{eq:devasymptC} on~$Y^*\times\bS$ for $(\hb+\nobreak1/\hb)^N\wt C(m',\ovv{m''})$, that is, with coefficients $\wt f_{\varphi,\beta,\ell}$ in $e_*\cCc{Y\times\bS,(0;\hbo)}$ (\resp in $e_*\cCh{\cY|\bS,(0;\hbo)}$).

\refstepcounter{paragraph}
\subsubsection*{\theparagraph. Globalizing the sectorial formula}\label{proof:devasymptC2}
It remains to showing that the expansion can be rewritten with coefficients $f_{\varphi,\beta,\ell}$ in $\cCc{X\times\bS,(0;\hbo)}$ (\resp in $\cCh{\cX|\bS,(0;\hbo)}$). We will once more use a Mellin transform argument, as in \S\ref{subsec:proofcomputing}.

\begin{lemme}\label{lem:devasymptY}
Let $\wt B_0\subset\CC$ be a finite subset. A function
\[
\wt f(t,\hb)=\sum_{\beta\in\wt B_0}\sum_{\ell=0}^L \wt f_{\beta,\ell}(t,\hb)\mt^{2\beta\star\hb/\hb}\Lt^\ell
\]
with $\wt f_{\beta,\ell}\in e_*\cCc{Y\times\bS,(0;\hbo)}$ (\resp in $e_*\cCh{\cY|\bS,(0;\hbo)}$), can be written as
\[
\sum_{\beta\in B_0}\sum_{\ell=0}^L f_{\beta,\ell}(t,\hb)\mt^{2\beta\star\hb/\hb}\Lt^\ell
\]
for some finite set $B_0$, with $f_{\beta,\ell}\in \cCc{X\times\bS,(0;\hbo)}$ (\resp in $\cCh{\cX|\bS,(0;\hbo)}$) if and only if there exists a finite set $A_0\subset\CC$ such that, for any $k',k''\in\hNN$ with $k'+k''\in\NN$, the poles of $s\mto\langle\wt f,\mt^{2s}t^{-k'}\ovt^{-k''}\chi\rangle$ are contained in the sets $s=\alpha\star\hb/\hb$, with $\alpha\in(A_0+k'-\NN)\cap(A_0+k''-\NN)$.
\end{lemme}

We apply the lemma to the $\varphi=0$ part of the expansion obtained above: arguing as in \S\ref{subsec:proofcomputing} and using that the Mellin transform of the sum of terms with $\varphi\neq0$ is an entire function of $s$, we conclude that the condition of Lemma \ref{lem:devasymptY} is fulfilled.
\end{proof}

\begin{proof}[\proofname\ of Lemma \ref{lem:devasymptY}]
A function $\wt g\in e_*\cCc{Y\times\bS,(0;\hbo)}$ has a Taylor expansion $\sum_{m\geq0}\wt g_m(e^{i\theta})\mt^m$, where each $\wt g_m$ is continuous on $S^1\times\bS$ and $C^\infty$ with respect to~$S^1$ and can be developed in Fourier series $\sum_n\wt g_{m,n}e^{in\theta}$. Then $\wt g$ can be written as $h_0\mt^{-k_0}+h_1\mt^{-k_0+1}$ for some $k_0\in\NN$ and $h_i\in\cCc{X\times\bS,(0;\hbo)}$ if and only if
\begin{equation}\tag*{(\protect\ref{lem:devasymptY})($*$)}\label{eq:devasymptY}
\wt g_{m,n}\not\equiv0\implique m\pm n\geq-k_0.
\end{equation}
Indeed, one direction is clear. Now, if \ref{eq:devasymptY} is fulfilled, we have
\[
\wt g=\mt^{-k_0}\hspace*{-2mm}\sum_{\substack{p,q\in\NN\\p+q\geq k_0}}\hspace*{-2mm}\wt g_{p+q-k_0,p-q-k_0}t^p\ovt^q
+\mt^{-k_0+1}\hspace*{-4mm}\sum_{\substack{p,q\in\NN\\p+q\geq k_0-1}}\hspace*{-3mm}\wt g_{p+q-k_0+1,p-q-k_0+1}t^p\ovt^q.
\]
Borel's lemma gives us two functions $h_0,h_1\in\cCc{X\times\bS,(0;\hbo)}$ such that $\wt g-(h_0\mt^{-k_0}+\nobreak h_1\mt^{-k_0+1})$ is in $e_*\cCc{Y\times\bS,(0;\hbo)}$ and infinitely flat along $\mt=0$, hence belongs to $\cCc{X\times\bS,(0;\hbo)}$. We include it in one of the two terms, to get the assertion.

Condition \ref{eq:devasymptY} can be expressed in terms of Mellin transform: indeed, one can check that, for any $j',j''\in\hNN$ such that $j'+j''\in\NN$, the Mellin transform $s\mto\langle\wt g,\mt^{2s}t^{-j'}\ovt{}^{-j''}\chi\rangle$, which is holomorphic for $\reel s\gg0$, extends as a meromorphic function on $\CC$ with simple poles contained in $\hZZ$. Condition \ref{eq:devasymptY} is equivalent to the existence of $k_0\in\NN$ such that for any $j',j''\in\hNN$ with $j'+j''\in\NN$, the poles are contained in the intersection of the sets $\frac12k_0-1+j'-\NN$ and $\frac12k_0-1+j''-\NN$. This gives the lemma when $\wt B_0$ has only one element and $L=0$.

Arguing by decreasing induction on $L$ (hence on the maximal order of the poles), one then shows the lemma when $\wt B_0$ is reduced to one element. The general case is then clear.
\end{proof}

Let us denote by $E$ the block-diagonal matrix, with blocks indexed by~$\varphi$, such that the $\varphi$-diagonal block is $e^{-\hb\ov\varphi+\varphi/\hb}\id$. Recall that we have $\vert e^{-\hb\ov\varphi+\varphi/\hb}\vert=1$ when $\hb\in\bS$. Notice also that, as $-\hb\ov\varphi+\varphi/\hb$ is ``real'', we have $E^*=E$. Recall that the matrix $A$ is defined in \eqref{eq:A}.

\begin{corollaire}\label{cor:admissibleholC}
Let $\hbo\in\bS$, let $\bme^\pmhbo$ be a pair of admissible local holomorphic bases (\cf\S\ref{subsec:localholbasis}) and let $\bC^\phbo$ be the matrix of $\wt C$ in these bases. Then, for $N$ large enough,
\[
(\hb+1/\hb)^N\cdot{}^t\!A^{-1}\bC^\phbo\ovv{A}{}^{-1}=(\hb+1/\hb)^NE+R^\phbo(t,\hb),
\]
where the entries of $R^\phbo$ are $C^\infty$ and holomorphic with respect to $\hb$ on $X^*\times\nb(\hbo)$, and $\lim_{t\to0}\Lt^\delta|R^\phbo(t,\hb)|=0$ for some $\delta>0$, uniformly on $\nb(\hbo)\cap\bS$.
\end{corollaire}

\begin{proof}
We consider the setting of Example \ref{exem:localholbasis}. Then, for any $\varphi_1,\varphi_2$, any $\beta_1\in B_\hbo$, $\beta_2\in B_\mhbo$, any $w_1,w_2\in\ZZ$, and any $e_1^\phbo\in\bme_{\varphi_1,\beta_1,w_1}^\phbo$, $e_2^\mphbo\in\bme_{\varphi_2,\beta_2,w_2}^\mphbo$, $(\hb+\nobreak1/\hb)^N\wt C(e_1^\phbo,\ovv{e_2^\mphbo})$ is a sum of terms as in Remark \ref{rem:devasymptC}\eqref{rem:devasymptC1} above, such that, on some neighbourhood $\nb(\hbo)\cap\nobreak\bS$, for any $\varphi,\beta,\ell,k',k''$ such that $g_{\varphi,\beta,\ell}^{(k',k'')}(0,\hb)\neq0$,
\begin{enumerate}
\item\label{exem:localholbasisC1}
if $\varphi_1\neq\varphi_2$, then $\ell_\hb(\beta+k')+\ell_{-\hb}(\beta+k'')\gg\ell_\hb(\beta_1)+\ell_{-\hb}(\beta_2)$,
\item\label{exem:localholbasisC2}
if $\varphi_1=\varphi_2$ and $\beta_1-\beta_2\not\in\ZZ$, then $\ell_\hb(\beta+k')+\ell_{-\hb}(\beta+k'')\gg\ell_\hb(\beta_1)+\ell_{-\hb}(\beta_2)$ or $\varphi=\varphi_1=\varphi_2$ and $\ell_\hb(\beta+k')+\ell_{-\hb}(\beta+k'')>\ell_\hb(\beta_1)+\ell_{-\hb}(\beta_2)$,
\item\label{exem:localholbasisC3}
if $\varphi_1=\varphi_2$, $\beta_1-\beta_2\in\ZZ$ and $w_1\neq w_2$, then $\ell_\hb(\beta+k')+\ell_{-\hb}(\beta+k'')>\ell_\hb(\beta_1)+\ell_{-\hb}(\beta_2)$ or $\varphi=\varphi_1=\varphi_2$, $\beta+k'=\beta_1$, $\beta+k''=\beta_2$ and $\ell<(w_1+w_2)/2$.
\end{enumerate}

Indeed, let us consider the second case for instance. If $\varphi\neq\varphi_1$, we apply the estimate given in Example \ref{exem:localholbasis} for $\ell_\hbo(\gamma)$. If $\varphi=\varphi_1=\varphi_2$ and $g_{\varphi,\beta,\ell}^{(k',k'')}(0,\hb)\neq0$, then $\beta$ satisfies both properties:
\begin{align*}
\beta+k'&=\beta_1\quad\text{or}\quad\ell_\hbo(\beta+k')>\ell_\hbo(\beta_1),\\
\beta+k''&=\beta_2\quad\text{or}\quad\ell_\mhbo(\beta+k'') >\ell_\mhbo (\beta_2).
\end{align*}
Recall that $\ell_\hb(\gamma)=\reel(\gamma'+i\hb\gamma'')$ and that, if $\hb\in\bS$, $\ell_{-\hb}(\gamma)=\ell_{1/\hb}(\gamma)$. The assumption implies that we cannot have simultaneously $\beta+k'=\beta_1$ and $\beta+ k''=\beta_2$. Therefore, in any case,
\[
\ell_\hbo(\beta+k')+\ell_\mhbo(\beta+k'')>\ell_\hbo(\beta_1)+\ell_\mhbo (\beta_2).
\]
One can choose $\nb(\hbo)$ and $\epsilon>0$ such that, for any $\hb\in\nb(\hbo)$,
\[
\ell_\hb(\beta+k')+\ell_{1/\hb}(\beta+k'')> \ell_\hb(\beta_1)+\ell_{1/\hb} (\beta_2)+\epsilon.
\]

In conclusion, when one of the assumptions of \eqref{exem:localholbasisC1}, \eqref{exem:localholbasisC2} or \eqref{exem:localholbasisC3} is fulfilled, the $[(\varphi_1,\beta_1,w_1),(\varphi_2,\beta_2,w_2)]$-block of $R^\phbo$ satisfies the property in the corollary (and even much better in the first two cases).

Let us now fix $\varphi$ and $\beta_1\in B_\hbo$, and let $\beta_2$ be the unique element of $B_\mhbo$ such that $\beta_2\in\beta_1+\ZZ$. Let $\ell\in\NN$ and let us consider the families $\bme^\phbo_{\varphi,\beta_1,\ell,\ell},\bme^\mphbo_{\varphi,\beta_2,\ell,\ell}$ lifting primitives elements. Assume first that $\varphi=0$. According to the definition of $P\psi_t^{0,\beta,\ell}C(\ell/2)$ as a residue, and arguing as above, we conclude that,  for $\hb\in\nb(\hbo)\cap\bS$,
\begin{multline*}
(\hb+1/\hb)^N\wt C((t\partiall_t-\beta\star\hb)^\ell\bme^\phbo_{0,\beta_1,\ell,\ell},\bme^\mphbo_{0,\beta_2,\ell,\ell})=(\hb+1/\hb)^N\mt^{2\beta\star\hb/\hb}(-\hb)^\ell\id\\
+\text{terms as in \eqref{exem:localholbasisC3}}.
\end{multline*}
Also according to the general form of $\wt C(\bme^\phbo_{0,\beta_1,\ell,\ell},\bme^\mphbo_{0,\beta_2,\ell,\ell})$ given by Proposition \ref{prop:devasymptC}, we conclude that
\begin{multline*}
(\hb+1/\hb)^N\wt C(\bme^\phbo_{0,\beta_1,\ell,\ell},\bme^\mphbo_{0,\beta_2,\ell,\ell})=(\hb+1/\hb)^N\mt^{2\beta\star\hb/\hb}\Lt^\ell\id/\ell!\\
+\text{terms as in \eqref{exem:localholbasisC3}}.
\end{multline*}
A similar result holds for $(\hb+1/\hb)^N\wt C(\bme^\phbo_{0,\beta_1,\ell,\ell-2k},\bme^\mphbo_{0,\beta_2,\ell,\ell-2k})$ for any $k$, as $\wt C$ is $\cR_\cX\otimes\cR_{\ovvv\cX}$-linear (with the convention that $\Lt^{\ell-2k}$ is replaced by $0$ if $\ell-2k<0$). This implies that the $[(0,\beta_1,w),(0,\beta_2,w)]$-block of $R^\phbo$ satisfies the property in the corollary for any $w\in\ZZ$.

When $\varphi\neq0$, we argue similarly by tensoring first by $\cE^{-\varphi/\hb}$.
\end{proof}

\subsection{Construction of an orthonormal basis}
We now go back to the situation of \S\ref{subsec:basis}. Therefore $\wt\cT=(\wt\cM,\wt\cM,\wt C)$ satisfies the twistor properties at $t=0$. We also assume that $\wt C$ takes values in $\cCh{\cX^*|\bS}$. By Remark \ref{rem:devasymptC}\eqref{rem:devasymptC2}, this means that, for any $\hbo\in\bS$, there exists an open neighbourhood $\nb(\hbo)$ such that $\wt C$ can be extended to $\cCh{X^*\times\nb(\hbo)}$.

Let $\varepsilong$ be the frame obtained by Lemma \ref{lem:glob}. It can be decomposed into subfamilies $\varepsilong_\varphi$.

\begin{proposition}\label{prop:Eorthbasis}
If $X$ is small enough, there exists a matrix $S(t,\hb)$ which is continuous on $X^*\times\nb(\Delta_0)$ and is holomorphic with respect to $\hb$, such that $\lim_{t\to0}S(t,\hb)=0$ uniformly on any compact set of the interior of $\Delta_0$, such that, if we set $\varepsilong'=\varepsilong\cdot(\id+S(t,\hb))$, the matrix $\wt C(\varepsilong',\ovv{\varepsilong'})$ is equal to $E$.
\end{proposition}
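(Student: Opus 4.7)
The plan is to reduce the proposition to a near-identity factorization of the matrix $\wt C(\varepsilong, \ovv\varepsilong)$ of the pairing in the global frame~$\varepsilong$. As a first step, I would show that
\[
\wt C(\varepsilong, \ovv\varepsilong) = E + Z
\]
where $Z$ is $C^\infty$ on $X^* \times \nb(\Delta_0)$ (extending from $X^*\times\bS$ by Remark \ref{rem:devasymptC}\eqref{rem:devasymptC2}), holomorphic in~$\hb$, and satisfies $\Lt^\delta |Z(t,\hb)| \to 0$ as $t \to 0$, uniformly on compact subsets of $\nb(\Delta_0)$. The estimate is obtained by chaining: Corollary \ref{cor:admissibleholC}, which supplies it for the matrix of~$\wt C$ in an admissible local holomorphic basis after conjugation by~$A^{-1}$; Lemma \ref{lem:Ahol}, which relates such a basis to the admissible $\cA$-basis $\bmea^\phbo$ via a base change $(\id + \hb t^k \QA)$ that contributes only a $O(\Lt^{-\delta})$ error once $k$ exceeds the orders of the poles of the $\varphi_j$'s; and Lemma \ref{lem:glob}, which relates the local untwisted basis $\varepsilong^\phbo = \bmea^\phbo \cdot A^{-1}$ to the global $\varepsilong$ by a base change of the same order.

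Once this estimate is in place, I would solve
\[
{}^t(\id + S) \, (E + Z) \, \ovv{(\id + S)} = E
\]
by a contraction argument. Hypothesis~(2) forces $\wt C^* = \wt C$, hence $Z$ is Hermitian for twistor conjugation, and so is $E$ by inspection of its diagonal exponential entries on~$\bS$. The linearized equation ${}^t S \cdot E + E \cdot \ovv S = -Z$ then admits a Hermitian solution $S_0$, constructed block-by-block in the $\varphi$-decomposition essentially as $-\tfrac12 E^{-1} Z$. The quadratic correction is handled by a Banach fixed-point in the space of continuous matrices on $X^* \times K$, $K \Subset \nb(\Delta_0)$, bounded by $C \Lt^{-\delta}$, with the contraction factor supplied by the decay $\Lt^\delta |Z| \to 0$. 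The resulting $S$ is continuous on $X^* \times \nb(\Delta_0)$, holomorphic in~$\hb$, and $S(t,\hb) \to 0$ uniformly on compact subsets of the interior of~$\Delta_0$ as $t \to 0$, as required.

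The main obstacle is the first step. The factorization step itself is a routine near-identity construction in a Banach space tailored to the decay rate~$\Lt^{-\delta}$, but assembling the estimate $\wt C(\varepsilong, \ovv\varepsilong) = E + O(\Lt^{-\delta})$ requires careful bookkeeping through the successive base changes (admissible formal, admissible holomorphic, admissible $\cA$-asymptotic, untwisted, and global), together with keeping track of the distinct weights $\ell_\hb(\beta)$ and their conjugates $\ell_{-\hb}(\beta)$ that appear in the different $(\varphi_1,\beta_1,w_1) \times (\varphi_2,\beta_2,w_2)$-blocks through Example \ref{exem:localholbasis} and the asymptotic expansion of Proposition \ref{prop:devasymptC}.
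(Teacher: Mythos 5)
Your step~1 overclaims and your step~2 misses the essential structure.

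\textbf{Step 1.} The decay estimate you ask for is stronger than what the chain of lemmas yields. Corollary~\ref{cor:admissibleholC} is stated ``uniformly on $\nb(\hbo)\cap\bS$'', and Lemma~\ref{lem:globalC} (which is what the paper actually isolates as the preliminary to the proposition) gives $\wt C(\varepsilong,\ovv\varepsilong)=(\id+R(t,\hb))\cdot E$ with the decay $\Lt^\delta|R|\to0$ holding \emph{uniformly only for $\hb\in\bS$}, while $R$ itself is merely continuous and $\hb$-holomorphic on a small $\nb(\bS)$. You cannot upgrade that to decay on compact subsets of $\nb(\Delta_0)$: the diagonal exponentials $e^{-\hb\ov\varphi+\varphi/\hb}$ entering $E$ (and hence $Z=RE$) have modulus~$1$ on $\bS$ but grow exponentially as $t\to0$ once $|\hb|<1$ and $\reel(\varphi/\hb-\hb\ov\varphi)>0$, so nothing in the off-diagonal $\varphi\neq\psi$ blocks forces $Z$ to be small away from $\bS$. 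This is precisely why the paper later remarks that it ``does not get a good estimate for the norm of the basis $\varepsilong$ when $\hb=1$'' and that the proposition ``does not give enough information on the matrix $S$ when $\hb\in\bS$'': the smallness of $S$ in the interior is a \emph{consequence} of the factorization, not an input.

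\textbf{Step 2.} With the estimate only on $\bS$, the equation to be solved, ${}^t(\id+S)\,\wt C(\varepsilong,\ovv\varepsilong)\,\ovv{(\id+S)}=E$, is a genuine Riemann--Hilbert problem: $S$ should be the boundary value of an $\hb$-holomorphic function on the disc side of $\bS$, and $\ovv{{}^tS}$ that of one on the other side. A Banach fixed point in a single space of matrices ``continuous on $X^*\times K$, $K\Subset\nb(\Delta_0)$'' does not respect this splitting and, even if it converged, gives you no reason to believe the solution extends holomorphically from $\bS$ into $\Delta_0$. The paper instead invokes the Hardy decomposition $L^2(\bS)=H'\oplus H''$ and the Birkhoff-type factorization of \cite[Lemme 4.5]{Malgrange83db} to write $\id+R=(\id+S')(\id+S_0)(\id+S'')$ with $S'$ (resp.\ $S''$) valued in $H'$ (resp.\ $H''$), and only then uses $\wt C^*=\wt C$, $E^*=E$ and uniqueness of this factorization to identify $S''$ with $S^{\prime*}$ up to $E$-conjugation, reducing to the finite-dimensional $\hb$-independent Hermitian factor $\id+S_0=(\id+U_0)(\id+U_0^*)$. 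Your linearized ansatz $S_0\approx-\tfrac12E^{-1}Z$ also ignores that $E^{-1}$ is unbounded off $\bS$ as $t\to0$, so the contraction factor would not be small where you are trying to iterate. The factorization lemma is the missing ingredient here, and it is not a routine near-identity perturbation in a single Banach space.
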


In fact, one can be more precise concerning the limit: the $L^2$-norm of $S(t,\cbbullet)$ on~$\Delta_0$ (or, equivalently, on~$\bS$), with respect to the standard Euclidean metric, has a~limit equal to $0$ when $t\to0$.

Similarly, the basis $\varepsilong'$ can be decomposed into subfamilies $\varepsilong'_\varphi$. We define the basis~$\wt\varepsilong$ by a rescaling depending on~$\varphi$:
\begin{equation}\label{eq:tildeeps}
\wt\varepsilong_\varphi=\varepsilong'_\varphi\cdot (e^{\hb\ov\varphi}\id).
\end{equation}

\begin{corollaire}\label{cor:orthonorm}
The frame $\wt\varepsilong$ is orthonormal with respect to $\wt C$.\qed
\end{corollaire}

\begin{proof}[\proofname\ of Theorem \ref{th:orbnilp}]
It follows from Corollary \ref{cor:orthonorm} that the restriction of $\cT$ to any small nonzero~$t$ is a polarized pure twistor of weight $0$. Therefore, according to \cite{Simpson97} (see also \cite[Lemma 2.2.2]{Bibi01c}), $\cT_{|X^*}$ defines $C^\infty$ bundle $H$ on~$X^*$ with a flat connection and a harmonic metric.
\end{proof}

\begin{proof}[\proofname\ of Proposition~\ref{prop:Eorthbasis}]
It relies on the following lemma:

\begin{lemme}\label{lem:globalC}
Let $\varepsilong$ be any global basis as constructed in Lemma \ref{lem:glob}. Then the matrix of $\wt C$ in this basis satisfies
\[
\wt C(\varepsilong,\ovv{\varepsilong})=(\id+R(t,\hb))\cdot E,
\]
with
\begin{equation}
\tag*{(\protect\ref{lem:globalC})$(*)$}\label{eq:globalC}
\begin{cases}
R(t,\hb)\text{ continuous on $X^*\times\nb(\bS)$ and holomorphic w.r.t. $\hb$},\\
\exists\delta>0,\; \lim_{t\to0}\Lt^\delta\vert R(t,\hb)\vert=0,\text{ uniformly for $\hb\in\bS$}.
\end{cases}
\end{equation}
\end{lemme}

The proof of the proposition then proceeds as follows. Let us denote by $H'\oplus H''$ the usual decomposition of $L^2(\bS)$: the functions in~$H'$ (\resp $H''$) extend holomorphically on $\{\module\hb<1\text{ \resp}>1\}$. Arguing as in \cite[Lemme~4.5]{Malgrange83db}, we find that there exist continuous mappings
\[
S':X\to \Mat_d(H'),\quad S'':X\to \Mat_d(H''),\quad S_0:X\to\Mat_d(\CC),
\]
where $d$ is the size of the matrices we are working with, uniquely determined by the following properties:
\begin{itemize}
\item
$S'(0,\hb)\equiv0$, $S''(0,\hb)\equiv0$, $S_0(0)=0$,
\item
$S'(t,0)\equiv0$, $S''(t,\infty)\equiv0$,
\item
$\id+R(t,\hb)=(\id+S'(t,\hb))(\id+S_0(t))(\id+S''(t,\hb))$.
\end{itemize}
We notice then that $S'$ (\resp $S''$) is continuous and holomorphic with respect to $\hb$ on $X^*\times\nb(\Delta_0)$ (\resp on $X^*\times\nb(\Delta_\infty)$).

For $S=S',S_0,S''$, we set $T=E^{-1}SE$. By assumption, the sesquilinear pairing $\wt C$ satisfies $\wt C^*=\wt C$. Therefore, $(\id+R)E=E(\id+R^*)$, as $E^*=E$ and, using uniqueness in the previous statement, we find
\[
T'=S^{\prime\prime*},\quad T_0=S_0^*,\quad T''=S^{\prime*}.
\]
Consequently, we get
\[
\wt C(\varepsilong,\ovv\varepsilong)=(\id+S')(\id+S_0)E(\id+S^{\prime*}).
\]
Moreover, as $S_0$ is independent of~$\hb$, the relation $ES_0=S_0^*E$ implies that $S_0$ is diagonal with respect to the $\varphi$-decomposition and we have $S_{0,\varphi,\varphi}^*=S_{0,\varphi,\varphi}$ for any~$\varphi$. If we set $\id+S_0=(\id+U_0)(\id+U_0^*)$ with $U_0$ diagonal with respect to the $\varphi$-decomposition, we then have
\[
(\id+S_0)E=(\id+U_0)E(\id+U_0^*).
\]
Now, the proposition is clear, by taking $\varepsilong'=\varepsilong\cdot(\id+{}^t\!S')^{-1}(\id+{}^tU_0)^{-1}$.
\end{proof}

\begin{proof}[\proofname\ of Lemma \ref{lem:globalC}]
Let us first indicate that we could have constructed a frame~$\varepsilong$ starting from admissible local holomorphic bases $\bme^\phbo$, by the same procedure (similar to that used in \cite{Bibi01c}). From Corollary \ref{cor:admissibleholC} we would only get an expression for $(\hb+1/\hb)^N\wt C(\varepsilong,\varepsilong)$, for $\hb\in\bS$. In order to get a good expression for $\wt C(\varepsilong,\varepsilong)$, we have to use the maximum principle, and thus work holomorphically in some neighbourhood of $\bS$, where the expression for $(\hb+1/\hb)^N\wt C(\varepsilong,\varepsilong)$ does not remains good because of the many terms $e^{-\hb\ov\psi+\psi/\hb}$. On the other hand, working with admissible $\cA$-bases will give us a better control of $(\hb+1/\hb)^N\wt C(\varepsilong,\varepsilong)$ on some neighbourhood of $\bS$.

In order to prove Lemma \ref{lem:globalC}, it is enough to check the property locally. Let us fix $\theta_o\in S^1$ and $\hbo\in\bS$. Given a pair $\bmea^\pmhbo$ of local admissible $\cA$-bases at $\pm\hbo$, we denote by $\varepsilong^\pmhbo$ the corresponding untwisted bases, by $\CA^\phbo$ the matrix $\wt C(\bmea^\phbo,\ovv{\bmea^\mphbo})$, by $\infC^\phbo$ the matrix $\wt C(\varepsilong^\phbo,\ovv{\varepsilong^\mphbo})$, and by $\infC$ the matrix $\wt C(\varepsilong,\ovv{\varepsilong})$.

Let us notice that, as the entries of $E$ are continuous on $X^*\times\Omega_0^*$ and holomorphic with respect to $\hb$, and have a modulus equal to $1$ when $\hb\in\bS$, we have, for $R^\phbo$ satisfying \ref{eq:globalC}, $E+R^\phbo(t,\hb)=(\id+R^{\prime\phbo}(t,\hb))E$ with $R^{\prime\phbo}$ satisfying \ref{eq:globalC}. We will then show $\infC=E+R^\phbo$, locally near $\hbo$. Moreover, according to Lemma \ref{lem:glob}, it is enough to show $\infC^\phbo=E+R^\phbo$ for some $R^\phbo$ satisfying \ref{eq:globalC}. Note also that, by definition, $\infC^\phbo={}^t\!A^{-1} \CA^\phbo\ovv A{}^{-1}$, where $A$ is defined by \eqref{eq:A}.

According to Lemma \ref{lem:bmeoA}, we can assume that the $\cA$-bases are compatible with the decomposition (DEC$^\sA$). Moreover, according to Lemma \ref{lem:Ahol}, we can also assume that Corollary \ref{cor:admissibleholC} applies to the chosen local admissible $\cA$-bases. We will follow the same steps.

\begin{lemme}\label{lem:caszero}
If $\varphi\neq\psi$, there exists a neighbourhood $\nb\othbo$ of $\othbo$ such that, for any $p\!\geq\!0$, $\lim_{t\to0}\mt^{-p}\,\big\vert\wt C(\bmea^\phbo_\varphi,\ovv{\bmea^\mphbo_\psi})\big\vert\!=\nobreak\!\nobreak0$ uniformly for $(t,\hb)\in X^*\times\bS\cap\nb\othbo$.
\end{lemme}

\begin{proof}
Arguing as in \S\ref{proof:devasymptC1}, we find that there exists $N$ such that the function $(\hb+\nobreak1/\hb)^Ne^{\hb\ov\psi-\varphi/\hb}\wt C(\bmea^\phbo_\varphi,\ovv{\bmea^\mphbo_\psi})$ has moderate growth, uniformly with respect to $(t,\hb)\in\nb\othbo$, and is zero if $\reel[(\varphi-\psi)/\hbo]\geq0$ on some open subset of $\nb\othbo$. As $e^{\hb\ov\psi-\varphi/\hb}\wt C(\bmea^\phbo_\varphi,\ovv{\bmea^\mphbo_\psi})$ is holomorphic with respect to~$\hb$ on $X^*\times\nb(\hbo)$ (\cf Remark \ref{rem:devasymptC}\eqref{rem:devasymptC2}), it also has moderate growth uniformly with respect to $\hb\in\nb(\hbo)$ by the maximum principle. The lemma follows from the rapid decay, uniformly with respect to $\hb\in\nb(\hbo)$, of $e^{\varphi/\hb-\hb\ov\psi}$ when \hbox{$\reel[(\varphi-\psi)/\hbo]<0$}.
\end{proof}

\begin{lemme}\label{lem:premiercas}
Let $\beta_1\in B_\hbo$, $\beta_2\in B_{-\hbo}$ be such that $\beta_1-\beta_2\not\in\ZZ$. Then there exists a neighbourhood $\nb(\hbo)$ of $\hbo$ and $\epsilon>0$ such that
\begin{equation}\tag*{(\protect\ref{lem:premiercas})($*$)}\label{eq:premiercas}
\lim_{t\to0}\mt^{-\epsilon}\,\mt^{-(\beta'_1+i\beta''_1\hb)}\mt^{-(\beta'_2+i\beta''_2/\hb)}\,\big\vert\wt C(\bmea^\phbo_{\varphi,\beta_1},\ovv{\bmea^\mphbo_{\varphi,\beta_2}})\big\vert=0
\end{equation}
uniformly for $\hb\in\nb(\hbo)\cap\bS$.
\end{lemme}

\begin{proof}
It follows from Corollary \ref{cor:admissibleholC} applied to the local $\cA$-admissible basis that \ref{eq:premiercas} holds after multiplication by $(\hb+\nobreak1/\hb)^N$ uniformly with respect to $\hb\in\nb(\hbo)\cap\bS$ and, as $|e^{\hb\ov\varphi-\varphi/\hb}|=1$ for $\hb\in\bS$, it also holds after multiplication by $(\hb+\nobreak1/\hb)^Ne^{\hb\ov\varphi-\varphi/\hb}$.

Arguing now as in \S\ref{proof:devasymptC1}, $(\hb+\nobreak1/\hb)^Ne^{\hb\ov\varphi-\varphi/\hb}\wt C(\bmea^\phbo_{\varphi,\beta_1},\ovv{\bmea^\mphbo_{\varphi,\beta_2}})$ can be written, for $\hb\in\nb(\hbo)$, as
\begin{multline*}
t^{\beta_1\star\hb/\hb}\ovt^{\beta_2\star\hb/\hb}\sum_{a,b\in\NN}g_{a,b}(\mt,\theta;\hb)(\log t)^a(\log\ovt)^b\\
=t^{\beta_1\star\hb/\hb}\ovt^{\beta_2\star\hb/\hb}\sum_{\ell\in\NN}h_\ell(\mt,\theta;\hb)\Lt^\ell,
\end{multline*}
where the entries of the matrices $g_{a,b}$ (hence $h_\ell$) are in $\cCh{\cY,\othbo}$. Condition \ref{eq:premiercas} translates then as the vanishing of suitable derivatives $\partial_{\mt}^kh_\ell$ of $h_\ell$ with respect to $\mt$ along $\mt=0$, for $\hb\in\nb(\hbo)\cap\bS$. As $\partial_{\mt}^kh_\ell(0,\theta;\hb)$ is holomorphic with respect to $\hb$, this vanishing holds for any $\hb\in\nb(\hbo)$, and in turn this implies that \ref{eq:premiercas} holds (up to changing $\epsilon$) for $(\hb+\nobreak1/\hb)^Ne^{\hb\ov\varphi-\varphi/\hb}\wt C(\bmea^\phbo_{\varphi,\beta_1},\ovv{\bmea^\mphbo_{\varphi,\beta_2}})$ uniformly on $\nb(\hbo)$.

As $e^{\hb\ov\varphi-\varphi/\hb}\wt C(\bmea^\phbo_{\varphi,\beta_1},\ovv{\bmea^\mphbo_{\varphi,\beta_2}})$ is holomorphic with respect to $\hb$ on $X^*\times\nb(\hbo)$ (\cf Remark \ref{rem:devasymptC}\eqref{rem:devasymptC2}), we get \ref{eq:premiercas} for it by the maximum principle. If we now restrict to $\hb\in\nb(\hbo)\cap\bS$, we get \ref{eq:premiercas}.
\end{proof}

In a similar way we get:
\begin{lemme}\label{lem:deuxiemecas}
Let $\beta\in B$ and $\beta_1\in B_\hbo\cap(\beta+\ZZ)$, $\beta_2\in B_{-\hbo}\cap(\beta+\ZZ)$. Let $w_1,w_2\in\ZZ$ be such that $w_1\neq w_2$. Then there exists a neighbourhood $\nb(\hbo)$ of $\hbo$ and $\delta>0$ such that
\begin{multline}\tag*{(\protect\ref{lem:deuxiemecas})($*$)}\label{eq:deuxiemecas}
\lim_{t\to0}\Lt^{\delta}\,\Lt^{-(w_1+w_2)/2}\mt^{-(\beta'_1+i\beta''_1\hb)}\mt^{-(\beta'_2+i\beta''_2/\hb)}\cdot{}\\
{}\cdot\big\vert \wt C(\bmea^\phbo_{\varphi,\beta_1,w_1},\ovv{\bmea^\mphbo_{\varphi,\beta_2,w_2}})\big\vert=0
\end{multline}
uniformly for $\hb\in\nb(\hbo)\cap\bS$.
\end{lemme}

\begin{proof}
Let us set $\beta_1=\beta+k_1$, $\beta_2=\beta+k_2$. The power of $\mt$ in \ref{eq:deuxiemecas} reads $\mt^{-2\beta\star\hb/\hb}\mt^{-(k_1+k_2)}$. On the other hand, arguing as in \S\ref{proof:devasymptC1}, $(\hb+\nobreak1/\hb)^N e^{\hb\ov\varphi-\varphi/\hb}\wt C(\bmea^\phbo_{\varphi,\beta_1,w_1},\ovv{\bmea^\mphbo_{\varphi,\beta_2,w_2}})$ can be expanded as
\[
\mt^{2\beta\star\hb/\hb}t^{k_1}\ovt^{k_2}\sum_{\ell\geq0}h_\ell(\mt,\theta,\hb)\Lt^\ell.
\]
We end the proof as for Lemma \ref{lem:premiercas}.
\end{proof}

Lastly, the case $w_1=w_2$ is treated similarly, concluding the proof of Lemma \ref{lem:globalC}, and hence of Proposition \ref{prop:Eorthbasis}.
\end{proof}

\subsection{A characterization by growth conditions}
Let $\wt\cT=(\wt\cM,\wt\cM,C)$ be an object of $\wtRTriples(X)$ which satisfies the assumptions in Theorem \ref{th:orbnilp}. If $\wt\cM$ is regular at $t=0$, then the matrix $E$ above is equal to identity. Moreover, the estimate for the limit of $\Lt^\delta R(t,\hb)$ holds in some neighbourhood of~$\bS$, not only on~$\bS$. It follows that the matrix $S(t,\hb)$ is continuous on $X\times\nb(\Delta_0)$ and holomorphic with respect to $\hb$ on this set. As a consequence, we have a good estimate on $X^*\times\nb(\Delta_0)$ for the norm of the global basis~$\varepsilong$ with respect to the harmonic metric defined by the twistor object on~$X^*$ and, using the base changes of \oldS\S\ref{subsec:formalbasis}, \ref{subsec:localholbasis} and \ref{subsec:Abasis}, we get an estimate for the norm of any local admissible holomorphic basis. In particular, we can characterize $\wt\cM$ from $\wt\cM_{|\cX^*}$ as the subsheaf consisting of local sections whose norms, with respect to the harmonic metric, have moderate growth near the origin. This was one of the main points in \cite[\S5.4]{Bibi01c}. It was also the basic tool for computing $L^2$ cohomology in \cite[\S6.2]{Bibi01c}, in a way analogous to that of~\cite{Zucker79} (see also \cite[Chap\ptbl20]{Mochizuki07} for a slightly different approach).

In the irregular case, we do not get a good estimate for the norm of the basis $\varepsilong$ when $\hb=1$, as Proposition \ref{prop:Eorthbasis} does not give enough information on the matrix~$S$ when $\hb\in\bS$. Moreover, even on the interior of $\Delta_0$ where we have a good estimate, the exponential terms prevent us to get a convenient estimate for the norm of any admissible local holomorphic basis. As a consequence, we do not get a characterization of the meromorphic extension~$\wt\cM$ of $\wt\cM_{|\cX^*}$ in terms of growth with respect to the harmonic metric.

Nevertheless, let us note that Corollary \ref{cor:orthonorm} implies that the restriction $\wt\varepsilon_0$ to $\hb=0$ of the frame $\wt\varepsilon$ is an orthonormal frame for the harmonic metric $h$ on the Higgs bundle $(\wt\cM/\hb\wt\cM)_{|X^*}$. Therefore, the frame $\varepsilong'_0$ is also an orthonormal frame and the frame $\varepsilong_0$, which is the restriction at $\hb=0$ of the frame obtained in Lemma \ref{lem:glob} is asymptotically orthonormal, according to Proposition \ref{prop:Eorthbasis}. Restricting \eqref{eq:untwist} to $\hb=0$ and using the definition of an admissible local $\cA$-basis gives a meromorphic frame of $\wt M_0$ whose $h$-norm has moderate growth at the origin. As a consequence we get:

\begin{corollaire}
Let $\wt\cT=(\wt\cM,\wt\cM,\wt C)$ be an object of $\wtRTriples(X)$ which satisfies the assumptions in Theorem \ref{th:orbnilp}. Then the meromorphic extension $\wt\cM/\hb\wt\cM$ of the Higgs bundle $(\wt\cM/\hb\wt\cM)_{|X^*}$ is characterized as the subsheaf of $j_*(\wt\cM/\hb\wt\cM)_{|X^*}$ consisting of sections whose $h$-norm has moderate growth at the origin. Similarly, the parabolic filtration defined by the metric is identified with the filtration induced by $V_{(0)}^\cbbullet\wt\cM$.\qed
\end{corollaire}

However, if we accept coefficients which are $C^\infty$ with respect to $\hb$, it is reasonable to expect that we can recover a characterization by moderate growth for $\cM$ (this question has now been completely solved by T\ptbl Mochizuki in \cite{Mochizuki08}). Such a procedure was used in a simpler situation in \cite{Bibi04} that we describe now.

We will denote by $\cO_\cX^\infty$ the subsheaf of $\cC_\cX^\infty$ consisting of functions which are holomorphic with respect to $t$, but possibly $C^\infty$ with respect to $\hb$. We can then consider $\cR_\cX^\infty=\cO_\cX^\infty\langle\hb\partial_t\rangle$. We will also need to consider the sheaf $\cO_\cX^0$ consisting of functions which are continuous, $C^\infty$ away from $\hb=0$, and holomorphic with respect to~$t$ (the notation is therefore not precise enough, but we keep it for simplicity). We define $\cR_\cX^0$ similarly.

Let $j:X^*\hto X$ or $\cX^*\hto\cX$ denote the open inclusion. Let $\pi:\cX\to X$ denote the projection.

\subsubsection*{A simple example}
Let us start from a polarized regular twistor $\cD[\tm]$-module $\wt\cT^\reg=(\wt\cR,\wt\cR,\wt C^\reg)$ on~$X$, and let us assume that $\wt\cT=(\wt\cM,\wt\cM,\wt C)$ is obtained from $\wt\cT^\reg$ by twisting with $\cE^{\varphi/\hb}$ (recall that it is a $\cO_\cX[\tm]$-free module of rank one, with the action of $\partiall_t$ defined on its generator $1$ by $\partiall_t1=\hbm\varphi\cdot 1$). In particular, we have $\wt C=e^{-\hb\ov\varphi+\varphi/\hb}\wt C^\reg$. Then $\cT^\reg$ defines a harmonic bundle $H$ on~$X^*$, with harmonic metric $h^\reg$ and, if $\pi:\cX\to X$ denotes the projection, we know that $\wt\cR$ is characterized by the moderate growth condition with respect to $\pi^*h^\reg$. As $\wt\cM$ is isomorphic, as a $\cO_\cX[\tm]$-module, to $\wt\cR$, it is also characterized by a moderate growth condition, but not with respect to the harmonic metric $h$ defined by~$\wt\cT$. Let us notice that, using the isomorphism
\[
\cCh{\cX}\otimes\wt\cR_{|\cX^*}\To{{}\cdot e^{-\hb\ov\varphi}}\cCh{\cX}\otimes\wt\cM_{|\cX^*}
\]
the object $\wt\cT$ restricted to $X^*$ also defines a polarized smooth twistor structure of weight $0$, with the same associated $C^\infty$-bundle~$H$ (but with distinct metric $h$ and distinct flat connection $D_V$; in fact, $h=e^{-2\reel\varphi}h^\reg$ and $D_V=D_V^\reg+d\varphi$, see \cite[\S2]{Bibi04}).

On the other hand, there is no isomorphism $\wt\cT_{|X^*}\to\wt\cT^\reg_{|X^*}$, as the flat global section $e^{-\varphi/\hb}\cdot 1$ of $\cE^{\varphi/\hb}$ only exists on $\hb\neq0$; as a matter of fact, $h$ and $h^\reg$ do differ.

In order to circumvent this difficulty, we extend the coefficients of $\wt\cM$ by tensoring with $\cO_\cX^\infty[\tm]$. Restricting to $\cX^*$, we have an isomorphism of $\cO_\cX^\infty[\tm]$-modules
\begin{equation}\label{eq:echb}
\cO_{\cX^*}^\infty[\tm]\otimes_{\cO_{\cX^*}[\tm]}\wt\cM_{|\cX^*}\To{e^{\ov\hb\varphi}}\cO_{\cX^*}^\infty[\tm]\otimes_{\cO_{\cX^*}[\tm]}\wt\cR_{|\cX^*},
\end{equation}
where $\ov\hb$ denotes the usual conjugate of~$\hb$. This isomorphism is not compatible with the $\cR_\cX^\infty$-structure (\ie the action of~$\partiall_t$), but it is compatible with $\wt C$ and $\wt C^\reg$: indeed, on~$\bS$, we have $\ov\hb=1/\hb$.

Under this isomorphism, the subsheaf of $j_*\big(\cO_{\cX^*}^\infty[\tm]\otimes_{\cO_{\cX^*}[\tm]}\wt\cM_{|\cX^*}\big)$ consisting of sections having moderate growth with respect to $\pi^*h$ is identified with the subsheaf of $j_*\big(\cO_{\cX^*}^\infty[\tm]\otimes_{\cO_{\cX^*}[\tm]}\wt\cR_{|\cX^*}\big)$ of sections having moderate growth with respect to $\pi^*h^\reg$ (use \cite[(2.2) and (2.3)]{Bibi04} with $d\varphi$ instead of $-dt$).

\subsubsection*{A generalization of the construction}
We generalize below the previous construction, although it does not seem to be enough to recover a characterization by moderate growth. We proceed a little differently, as we do not have at hand a regular $\cR_\cX[\tm]$-module like $\wt\cR$ in the previous example.

\begin{enumerate}
\item\label{prop:oxinfstep1}
In the first step, we construct an auxiliary locally free $\cO_\cX^0[\tm]$-module $\wt\cM^\aux$ with a compatible action of $\cR_\cX^0$. In the simple example above, we would have $\wt\cM^\aux=\cE^{\ov\hb\varphi}\otimes\wt\cM$.
\item\label{prop:oxinfstep2}
In the second step, we show that there is an isomorphism
\begin{equation}\label{eq:isoaux}
\cO_{\cX^*}^0\otimes_{\cO_{\cX^*}}\wt\cM_{\cX^*}\isom \wt\cM_{\cX^*}^\aux
\end{equation}
compatible with the $\cR_{\cX^*}^0$-actions. In the simple example above, it is hidden behind the isomorphism \eqref{eq:echb}. As a consequence, we get, by transporting $\wt C$ through the previous isomorphism, a $\cR_{\cX|\bS}^\infty\otimes_{\cC^\infty_\bS}\cR_{\ovvv\cX|\bS}^\infty$-sesquilinear pairing
\[
\wt C^\aux:\wt\cM_{|\bS}^\aux\otimes_{\cC^\infty_\bS}\ovv{\wt\cM_{|\bS}^\aux}\to\Dbh{X}[\tm],
\]
and we get an isomorphism of triples
\[
\cO_{\cX^*}^0\otimes_{\cO_{\cX^*}}(\wt\cM_{\cX^*},\wt\cM_{\cX^*},\wt C)\isom (\wt\cM_{\cX^*}^\aux,\wt\cM_{\cX^*}^\aux,\wt C^\aux).
\]
\end{enumerate}

\subsubsection*{Step one: construction of $\wt\cM^\aux$}
Let us denote by $\wt\cM^{\wedge\aux}$ the $\cO_{\wh\cX}^\infty[\tm]$-module
\[
\tbigoplus_j(\cE^{(\ov\hb+1/\hb)\varphi_j}\otimes\wt\cR_j^\wedge),
\]
with the natural $\cR_{\wh\cX}^\infty$-structure. We will construct $\wt\cM^\aux$ equipped with an isomorphism
\[\tag{DEC$^{\wedge\aux}$}
\wt\cM^{\aux\wedge}\isom\wt\cM^{\wedge\aux}.
\]
We first recall the well-known Malgrange-Sibuya Theorem in the present situation.

We use the notation of \S\ref{subsec:sect0} for $\cZ$ (real blow up of $\hb=0$ and of $t=0$ in $X$) and $\cA_\cZ$, and we denote by $p:\cZ\to\cX$ the natural map. We define the sheaf $\cA_\cZ^\infty$ by relaxing the holomorphy condition with respect to $\hb$. If $\hbo\neq0$, $p^{-1}\ohbo$ consists of a circle $S^1$ with coordinate $\theta$, while if $\hbo=0$, then $p^{-1}\oO=S^1\times S^1$ with coordinates $(\theta,\arghb)$. Let us denote by $\GL_d(\cA_\cZ^\infty)$ the sheaf of groups consisting of invertible matrices of size $d$ with entries in $\cA_\cZ^\infty$, and by $\GL_{d,\{\hb=0\}}^{<\{t=0\}}(\cA_\cZ^\infty)$ the subsheaf of matrices of the form $\id_d+M$, where the entries of $M$ are infinitely flat along $\{t=0\}$ and vanish when restricted to $\hb=0$.

\begin{theoreme}[Malgrange-Sibuya]
For any $\hbo\in\Omega_0$, the image of the pointed set $H^1\big(p^{-1}\ohbo,\GL_{d,\{\hb=0\}}^{<\{t=0\}}(\cA_\cZ^\infty)\big)$ in the pointed set $H^1\big(p^{-1}\ohbo,\GL_{d,\{\hb=0\}} (\cA_\cZ^\infty)\big)$ is the identity.
\end{theoreme}

\begin{proof}
We recall the classical proof (\cf \cite[Appendix]{Malgrange83bb}) for the sake of completeness. Let $\lambda$ be an element of $H^1\big(p^{-1}\ohbo,\GL_{d,\{\hb=0\}}^{<\{t=0\}}(\cA_\cZ^\infty)\big)$. Denote by $\cE_\cZ$ the sheaf of $C^\infty$-functions on~$\cZ$. Then one checks that the previous set, where $\cA_\cZ^\infty$ is replaced with $\cE_\cZ$, reduces to the identity. If $\lambda$ is defined on a covering $(U_i)$ of $p^{-1}\ohbo$, then there exist sections $\mu_i\in\Gamma\big(U_i,\GL_{d,\{\hb=0\}}^{<\{t=0\}}(\cE_\cZ)\big)$ such that $\lambda_{ij}=\mu_i\mu_j^{-1}$. Moreover, $\ov\partial_t$ is well-defined on $\cE_\cZ^{<\{t=0\}}$, and the $\mu_i^{-1}\ov\partial_t\mu_i$ glue together as a section of $\Mat_{d,\{\hb=0\}}^{<\{t=0\}}(\cE_\cZ)$ on $p^{-1}\ohbo$. This is then also a germ of section~$\nu$ of $\Mat_{d,\{\hb=0\}}^{<\{t=0\}}(\cE_{X\times\wt\Omega_0})$ at $\ohbo$ if $\hbo\neq0$, or along the circle $S^1$ with coordinate $\arghb$ if $\hbo=0$. According to \cite[Chap\ptbl IX, Th\ptbl1]{Malgrange58}, we can solve $\ov\partial_t\eta=-\eta\nu$ with
\begin{itemize}
\item
$\eta\in \GL_d(\cE_{\cX,\ohbo})$ if $\hbo\neq0$,
\item
$\eta\in\Gamma\big(S^1,\GL_d(\cE_{X\times\wt\Omega_0})\big)$ if $\hbo=0$ and $S^1$ is the circle where $\arghb$ varies,
\end{itemize}
and, in both cases, $\eta(0,\hb)\equiv\id$. Moreover, as $\nu_{|\hb=0}=0$, the function $\eta(t,0)$ is holomorphic and, replacing $\eta$ with $\eta(t,0)^{-1}\eta$ gives $\eta_{|\hb=0}=\id_d$. We now replace $\beta_i$ with $\beta_i\eta$, which belongs to $\Gamma\big(U_i,\GL_{d,\{\hb=0\}} (\cA_\cZ^\infty)\big)$.
\end{proof}

Let us first work at $\hbo\neq0$. According to the decomposition (DEC$^\sA$) of Proposition \ref{prop:decompA}, if we denote by $\wt\cM^\el_\ohbo$ the sum $\oplus_\varphi(\cE^{\varphi/\hb}\otimes\wt\cR_{\varphi,\ohbo})$, then $\wt\cM_\ohbo$ determines a Stokes cocycle $\lambda$ relative to $\wt\cM^\el_\ohbo$. If we fix bases of $\wt\cR_{\varphi,\ohbo}$ and if $d$ denotes the rank of $\wt\cM$, this cocycle belongs to $H^1\big(p^{-1}\ohbo,\GL_d^{<\{t=0\}}(\cA_\cZ)\big)$ (the condition on the restriction to $\{\hb=0\}$ is now empty).

We now consider $\wt\cM^{\aux,\el}_\ohbo\defin\oplus_\varphi(\cE^{(\ov\hb+1/\hb)\varphi}\otimes\wt\cR_{\varphi,\ohbo})$ equipped with its connection $\nabla^{\aux,\el}$ and the same $\cO_{\cX,\ohbo}[\tm]$-basis as $\wt\cM^\el_\ohbo$ (in other words, we only change the connection on $\wt\cM^\el_\ohbo$). If we denote by $\partiall_t^\reg$ the action of $\partiall_t$ on $\wt\cR_{\varphi,\ohbo}$, then the action of $\partiall_t$ on $\cE^{\varphi/\hb}\otimes\wt\cR_{\varphi,\ohbo}$ is $\partiall_t^\reg+\varphi'$, and that on $\cE^{(\ov\hb+1/\hb)\varphi}\otimes\wt\cR_{\varphi,\ohbo}$ is $\partiall_t^\reg+(1+\module\hb^2)\varphi'$.

Let us note that the argument of $\ov\hb+1/\hb$ is the same as the argument of $1/\hb$. Let us denote by $E_c$ the block-diagonal matrix having diagonal $\varphi$-blocks $e^{\ov\hb\varphi}\id_{d_\varphi}$. Then, if we denote by $\lambda^\aux$ the cocycle $E_c\lambda E_c^{-1}$, we see that $\lambda^\aux$ is $\cR_\cY^\infty[\tm]$-linear (\ie is compatible with the action of $\partiall_t$ on $\wt\cM^{\aux,\el}_\ohbo$) and still belongs to $H^1\big(p^{-1}\ohbo,\GL_d^{<\{t=0\}}(\cA_\cZ^\infty)\big)$. If for some suitable covering $(U_i)$ of $p^{-1}\ohbo$ we set $\lambda^\aux=\mu_i\mu_j^{-1}$ according to the theorem of Malgrange-Sibuya, we conjugate on~$U_i$ the matrix of $\nabla^{\aux,\el}$ by $\mu_i$ and we get a new connection $\nabla^\aux$ which does not depend on~$U_i$. It therefore defines a connection $\nabla^\aux$ on the $\cO_{\cX,\ohbo}^\infty[\tm]$-module $\wt\cM^{\aux,\el}_\ohbo$, that we now call $\wt\cM^\aux_\ohbo$. Moreover, we have a fixed identification $\wt\cM^{\aux,\el\wedge}_\ohbo=\wt\cM^{\wedge\aux}_\ohbo$. Last, if we have two such constructions (depending on the choice of $\mu_i$ or of the covering $(U_i)$), we get isomorphic $\cR_{\cX,\ohbo}^\infty[\tm]$-modules; the isomorphism induces the identity at the formal level, therefore the isomorphism is unique. This enables us to glue the various $\wt\cM^\aux_\ohbo$ when $\hbo$ varies in $\Omega_0^*$.

Let us now work at $\hbo=0$. We consider a covering $(U_i)$ of $p^{-1}\oO$ on which the decomposition (DEC$^\sA_0$) holds. Let us fix bases of the $\cA_\cZ[\tm]$-modules $\wt\cR^\sA_{i,\varphi}$ (the index $i$ refers to the open set $U_i$ where $\wt\cR^\sA_{i,\varphi}$ is defined). We assume that these bases lift the same $\cO_{\wh\cX,\oO}[\tm]$-basis of $\wt\cR_{\varphi,\oO}^\wedge$ and, when restricted to $\hb=0$, come from a given basis of $\wt R_\varphi$. We identify then the various $\cA_\cZ[\tm]$-modules $\wt\cR^\sA_i$ to $\cA_\cZ[\tm]^d$, equipped with the connection $\nabla_i^\reg$. Twisting $\wt\cR^\sA_{i,\varphi}$ with $\cE^{\varphi/\hb}$ (\resp $\cE^{(\ov\hb+1/\hb)\varphi}$) gives the connection $\nabla_i$ (\resp $\nabla_i^{\aux,\el}$).

With respect to these identifications and using the isomorphisms (DEC$^\sA_0$), we get a cocycle $\lambda\in H^1\big(p^{-1}\oO,\GL_{d,\{\hb=0\}}^{<\{t=0\}}(\cA_\cZ)\big)$. We conjugate it as above to get $\lambda^\aux$. Using the same argument as above, and as $\ov\hb$ vanishes at $\hb=0$, we have $\lambda^\aux\in H^1\big(p^{-1}\oO,\GL_{d,\{\hb=0\}}^{<\{t=0\}}(\cA_\cZ^\infty)\big)$. If we set $\lambda^\aux_{ij}=\mu_j\mu_i^{-1}$ then, conjugating $\nabla_i^{\aux,\el}$ by $\mu_i$ for each $i$, we get a connection $\nabla^\aux$ with matrix (in the fixed bases) having entries which are continuous on $\cX$ near $\oO$, meromorphic with respect to~$t$, and which become $C^\infty$ when expressed in polar coordinates with respect to $\hb$ (in particular, they are $C^\infty$ away from $\hb=0$). We thus get $\wt\cM^\aux_\oO$. By uniqueness, it coincides with $\wt\cM^\aux$ constructed previously, when restricted to $\hb\neq0$.

\subsubsection*{Step two: the isomorphism \eqref{eq:isoaux}}
It is now straightforward. We have a formal isomorphism $\wt\cM^\wedge\to\wt\cM^{\aux\wedge}$ induced by $E_c$. For any $\hbo$, this isomorphism exchanges the Stokes cocycles along $p^{-1}(\hbo)$. Therefore, it induces a local isomorphism \eqref{eq:isoaux} on $X^*\times\nb(\hbo)$. These local isomorphisms glue together as they have the same underlying formal isomorphism.\qed

\backmatter
\providecommand{\bysame}{\leavevmode\hbox to3em{\hrulefill}\thinspace}
\providecommand{\MR}{\relax\ifhmode\unskip\space\fi MR }
\providecommand{\MRhref}[2]{%
  \href{http://www.ams.org/mathscinet-getitem?mr=#1}{#2}
}
\providecommand{\href}[2]{#2}

\end{document}